\newcommand{\norm}[1]{\left\Vert#1\right\Vert}
\newcommand{\abs}[1]{\left\vert#1\right\vert}
\newcommand{\set}[1]{\left\{#1\right\}}
\newcommand{\Real}{\mathbb{R}}
\newcommand{\supp}{\operatorname{supp}}
\renewcommand{\L}{\mathcal{L}}
\renewcommand{\P}{\mathcal{P}}
\newcommand{\T}{\mathcal{T}}
\newtheorem{thm}{Theorem}[section]
\newtheorem{prop}[thm]{Proposition}
\newtheorem{lem}[thm]{Lemma}
\theoremstyle{definition}
\newtheorem{defn}[thm]{Definition}
\newtheorem{rem}[thm]{Remark}
\numberwithin{equation}{section}
\author[T. Ma]{Tao Ma}
\address{School of Mathematics and Statistics \\
Wuhan University\\
430072 Wuhan, China}
\email{tma.math@whu.edu.cn}
\author[P. R. Stinga]{Pablo Ra\'ul Stinga}
\address{Departamento de Matem\'aticas y Computaci\'on\\
Universidad de La Rioja\\
26006 Logro\~no, Spain}
\email{pablo-raul.stinga@unirioja.es}
\author[J. L. Torrea]{Jos\'e L. Torrea}
\address{Departamento de Matem\'aticas\\
Universidad Aut\'onoma de Madrid\\
28049 Madrid, Spain  and ICMAT-CSIC-UAM-UCM-UC3M}
\email{joseluis.torrea@uam.es}
\author[C. Zhang]{Chao Zhang}
\address{School of Mathematics and Statistics \\
          Wuhan University \\
          430072 Wuhan, China}
\address{\textit{Current address:}
          \vskip 0.04cm  Departamento de Matem\'aticas \\
          Universidad Au\-t\'o\-no\-ma de Madrid \\
          28049 Madrid, Spain}
\email{zaoyangzhangchao@163.com}
\thanks{Research partially supported by Ministerio de Ciencia e Innovaci\'{o}n de Espa\~{n}a MTM2008-06621-C02-01.
The first and fourth authors were partially supported by National Natural Science Foundation of China No.11071190.
Second author was also supported by grant COLABORA 2010/01 from Planes Riojanos de I+D+I}
\keywords{Schr\"odinger operator, reverse H\"older inequality, fractional derivatives,
regularity estimates in H\"older spaces, harmonic extensions, Campanato spaces}
\subjclass[2010]{Primary: 35J10, 26A33, 35B65. Secondary: 42B37, 46E35}
\begin{document}

\title{Regularity properties of Schr\"odinger operators}

\begin{abstract}
Let $\L$ be a Schr\"odinger operator of the form $\L=-\Delta+V$, where the nonnegative potential $V$ satisfies a reverse H\"older inequality. Using the method of $\L$-harmonic extensions we study regularity estimates at the scale of adapted H\"older spaces. We give a pointwise description of $\L$-H\"older spaces and provide some characterizations in terms of the growth of fractional derivatives of any order and Carleson measures. Applications to fractional powers of $\L$ and multipliers of Laplace transform type developed.
\end{abstract}

\maketitle

\section{Introduction}

One of the methods applied to develop regularity estimates in the theory of partial
 differential equations is to consider equivalent formulations of the problems by adding
 a new variable. Let us give a rough description of the idea. Suppose that we want to study
 regularity properties of a certain function $f(x)$ defined in some domain $\Omega$.
  Take $f$ as the Dirichlet or initial data for some PDE $Au=0$ in the variables $x\in\Omega$
  and $t$ in an interval $I$. The question is the following: which properties of the
  solution $u$ in $\Omega\times I$ imply regularity of $f$, the boundary data?
  The most simple and classical situation to consider is the following:
\begin{equation}\label{Dir prob}
\left\{
  \begin{array}{ll}
    Au\equiv\partial_{tt}u+\Delta u=0, & \hbox{in}~\Real^n\times(0,\infty), \\
    u(x,0)=f(x), & \hbox{on}~\Real^n.
  \end{array}
\right.
\end{equation}
Here $\Delta$ is the Laplacian in $\Real^n$. Then $u$ is the harmonic extension of $f$, namely
\begin{equation}\label{Classical Poisson}
u(x,t)=e^{-t(-\Delta)^{1/2}}f(x).
\end{equation}
Note that we have $-u_t(x,0)=(-\Delta)^{1/2}f(x)$. Therefore, the harmonic extension $u$ can give some information not only about $f$ but also about the fractional Laplacian, a nonlocal operator, acting on $f$. It is worth to mention here that such a remarkable fact was applied to show that weak solutions of the critical dissipative quasi-geostrophic equation are H\"older continuous, see \cite{Caffarelli-Vasseur}.

In general, to study the regularity properties of fractional
operators like $(-\Delta)^{1/2}$, or more generally
$(-\Delta)^{\sigma/2}$ and $(-\Delta)^{-\sigma/2}$, $0<\sigma<2$,
there are essentially two possible alternatives. Either describe the
operators with a pointwise integro-differential or integral formula,
or characterize the H\"older classes by some norm estimate of
harmonic extensions \eqref{Dir prob}, that are in fact Poisson
integrals \eqref{Classical Poisson}. The first approach was taken by
L. Silvestre in \cite{Silvestre} to analyze how
$(-\Delta)^{\pm\sigma/2}$ acts on the H\"older spaces
{$C^{0,\alpha}$}. Let us point out that he also needed to handle the
Riesz transforms $\partial_{x_i}(-\Delta)^{-1/2}$ as operators on
{$C^{0,\alpha}$}. The second one, in the spirit of harmonic
extensions, is nowadays classical. Indeed, for bounded functions $f$
it is well known that the harmonic extension \eqref{Classical
Poisson} satisfies $\norm{tu_t(\cdot,t)}_{L^\infty(\Real^n)}\leq
Ct^\alpha$ for all $t>0$ if, and only if, $f\in {C^{0,\alpha}}$,
$0<\alpha<1$, see for instance \cite{SteinSingular}.

In this paper we consider the time independent Schr\"odinger operator in $\Real^n$, $n\geq3$,
\begin{equation}\label{L}
\L:=-\Delta+V,
\end{equation}
where the nonnegative potential $V$ satisfies a reverse H\"older
inequality for some $q>n/2$, see \eqref{RHs} below. Observe that the
reverse H\"older condition is just an integrability property, so no
smoothness on $V$ is assumed.  Our aim is to develop the regularity
theory of H\"older spaces adapted to $\L$ and to study estimates of
operators like {\bf fractional integrals} $\L^{-\sigma/2}$, and {\bf
fractional powers} $\L^{\sigma/2}$. Such operators can be defined by
using $\L$--harmonic extensions. The solution of the boundary value
problem
\begin{equation}\label{L-harmonic extension}
\left\{
  \begin{array}{ll}
    \partial_{tt}u-\L u=0, & \hbox{in}~\Real^n\times(0,\infty), \\
    u(x,0)=f(x), & \hbox{on}~\Real^n,
  \end{array}
\right.
\end{equation}
is given by the action of the $\L$--Poisson semigroup on $f$:
$$u(x,t)=\P_tf(x)\equiv e^{-t\sqrt{\L}}f(x).$$
Let us recall that Bochner's subordination formula gives a way to express $u$ as a mean in the time variable of the solution of the $\L$--diffusion equation, see \eqref{subordinacion}. The powers of $\L$ can be described in terms of $u$ as in \eqref{fi} and \eqref{fl}. Therefore, to deal with spaces and operators, we will adopt the point of view based on $\L$--harmonic extensions \eqref{L-harmonic extension}.

Our choice of the method turns out to be well suited for our purposes. In this Schr\"odinger context the pointwise description of the operators as in \cite{Silvestre} seems to be technically difficult. In fact, even for one of the most simplest cases (the harmonic oscillator, where $V(x)=\abs{x}^2$) it is already rather involved, see \cite{Stinga-Torrea}. On the other hand, the characterization of $\L$--H\"older spaces via $\L$--harmonic extensions does not appear to be easily obtained as a repetition of the arguments for classical H\"older spaces given in \cite{SteinSingular}.

Let us begin with the definition of H\"older spaces naturally associated to $\L$. The concept is based on the \textit{critical radii function} $\rho(x)$ defined by Z. Shen in \cite{Shen}, see \eqref{critical}.

\begin{defn}[H\"older spaces for $\L$]\label{defC}
A continuous function $f$ defined on $\Real^n$ belongs to the space
$C^{0,\alpha}_\L$, $0<\alpha\leq1$, if the quantities
$$[f]_{C^{\alpha}}=\sup_{\begin{subarray}{c}x,y\in\Real^n\\x\neq y\end{subarray}}
\frac{\abs{f(x)-f(y)}}{\abs{x-y}^\alpha}\quad\hbox{and}\quad[f]_{M^\alpha_\L}=\sup_{x\in\Real^n}\abs{\rho(x)^{-\alpha} f(x)},$$
are finite. The norm in the spaces $C_\L^{0,\alpha}$ is $\norm{f}_{C_\L^{0,\alpha}}=[f]_{C^{\alpha}}+[f]_{M^\alpha_\L}$.
\end{defn}

The first main theorem of the paper is the following regularity result.

\begin{thm}\label{Thm:Operators}
Assume that $q>n$. Let $\sigma$ be a positive number, $0<\alpha<1$
and $f\in C^{0,\alpha}_\L$.
\begin{enumerate}[(a)]
    \item If $0<\alpha+\sigma<1$ then $\L^{-\sigma/2}f\in C^{0,\alpha+\sigma}_\L$ and $\|\L^{-\sigma/2}f\|_{C^{0,\alpha+\sigma}_\L}\leq C\norm{f}_{C^{0,\alpha}_\L}$.
    \item If $\sigma<\alpha$ then $\L^{\sigma/2}f\in C_\L^{0,\alpha-\sigma}$ and $\|\L^{\sigma/2}f\|_{C^{0, \alpha-\sigma}_\L}\leq C\norm{f}_{C^{0,\alpha}_\L}$.
    \item Let $a$ be a bounded function on $[0,\infty)$ and define
        $$m(\lambda)=\lambda^{1/2}\int_0^\infty e^{-s\lambda^{1/2}}a(s)~ds,\quad \lambda>0.$$
        Then the multiplier operator of Laplace transform type $m(\L)$ is bounded on $C^{0,\alpha}_\L$, $0< \alpha<1$.
\end{enumerate}
\end{thm}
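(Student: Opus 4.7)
The plan is to reduce all three parts to one strategy based on $\L$--harmonic extensions. The key preliminary step, which I would establish first, is a Littlewood--Paley characterization of $C^{0,\alpha}_\L$: $f\in C^{0,\alpha}_\L$ if and only if $\abs{f(x)}\leq C\rho(x)^\alpha$ and, for every integer $k>\alpha$,
$$\norm{\partial_t^k\P_t f}_{L^\infty(\Real^n)}\leq C\,t^{\alpha-k},\qquad t>0.$$
The ``only if'' direction uses that $\partial_t^k\P_t$ has a kernel with $k$ vanishing moments (seen via Bochner subordination from the $\L$--heat kernel) together with Shen's quantitative kernel estimates; the ``if'' direction integrates up in $t$ along the Poisson extension and combines with the pointwise $\rho$--bound. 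The hypothesis $q>n$ enters here to secure sufficient Hölder regularity of the $\L$--heat kernel itself, so that the classical oscillation of $f$ is controlled by $\partial_t\P_t f$.

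Given this characterization, each part reduces to an integral computation. For (a), from the identity $\int_0^\infty t^{\sigma-1}e^{-t\sqrt\lambda}\,dt=\Gamma(\sigma)\lambda^{-\sigma/2}$ and the functional calculus,
$$\L^{-\sigma/2}f=\frac{1}{\Gamma(\sigma)}\int_0^\infty t^{\sigma-1}\,\P_t f\,dt.$$
The semigroup identity $\P_s\P_t=\P_{s+t}$ together with Step~1 gives, for any integer $k>\alpha+\sigma$,
$$\norm{\partial_s^k\P_s(\L^{-\sigma/2}f)}_\infty\leq C\int_0^\infty t^{\sigma-1}(s+t)^{\alpha-k}\,dt=Cs^{\alpha+\sigma-k},$$
after the change of variable $t=su$, which is exactly the Poisson growth needed to conclude $\L^{-\sigma/2}f\in C^{0,\alpha+\sigma}_\L$. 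For (b) I would use the Balakrishnan-type formula $\L^{\sigma/2}f=c_{\sigma,k}\int_0^\infty t^{k-1-\sigma}\,\partial_t^k\P_t f\,dt$ with $k=\lceil\sigma\rceil$ and argue analogously, taking enough Poisson derivatives in $s$ to absorb the exponent $\alpha-\sigma$. For (c), rewrite $m(\L)f=\int_0^\infty a(s)\sqrt\L\,\P_s f\,ds=-\int_0^\infty a(s)\,\partial_s\P_s f\,ds$; the characterization then delivers the right decay with a constant proportional to $\norm{a}_\infty$.

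To close the $C^{0,\alpha}_\L$--norm one still needs the pointwise bounds $\abs{\L^{\pm\sigma/2}f(x)}\leq C\rho(x)^{\alpha\pm\sigma}$ and $\abs{m(\L)f(x)}\leq C\rho(x)^\alpha$. I would split the defining integrals at the critical scale $t\sim\rho(x)$: for $t\leq\rho(x)$ use the local behavior of $\P_t f$ near $x$ together with the Hölder seminorm $[f]_{C^\alpha}$; for $t\geq\rho(x)$ invoke the exponential decay of the $\L$--heat kernel at scales beyond $\rho(x)$ coming from Shen's estimates, transferred to $\P_t$ by Bochner subordination.

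The main obstacle is Step~1: translating the classical Hölder modulus into Poisson--derivative growth in a way that correctly incorporates the space--dependent scale $\rho(x)$, which has no classical counterpart and forces one to treat the oscillation bound and the weighted pointwise bound as two separate but intertwined conditions. Once the characterization is in hand, the remainder of the argument is essentially scaling in the $t$ variable. A secondary difficulty lies in the $\rho$--tail estimate of Step~3, where the strength of the reverse Hölder hypothesis $q>n$ is needed to obtain sharp decay of the $\L$--heat kernel past the critical scale, and hence of the operator outputs at points $x$ with small $\rho(x)$.
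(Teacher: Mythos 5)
Your overall strategy is the one the paper itself follows: reduce all three parts to a characterization of $C^{0,\alpha}_\L$ by the growth $\|t^k\partial_t^k\P_tf\|_{L^\infty}\leq Ct^{\alpha-k}$ (the paper's Theorem \ref{Thm:Characterization}, used with fractional $\beta$), then exploit the subordination formulas $\L^{-\sigma/2}f=\frac{1}{\Gamma(\sigma)}\int_0^\infty t^{\sigma-1}\P_tf\,dt$, the incremental/Balakrishnan formula for $\L^{\sigma/2}$, and $m(\L)f=-\int_0^\infty a(s)\partial_s\P_sf\,ds$, together with the semigroup property and a change of variables in $t$, plus the pointwise growth bounds $|\L^{\pm\sigma/2}f|\lesssim\rho^{\alpha\pm\sigma}$, $|m(\L)f|\lesssim\rho^\alpha$ obtained by splitting at $t\sim\rho(x)$ (the paper's Lemma \ref{crucial}). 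Those computations in your parts (a)--(c) match the paper's proof of Theorem \ref{Thm:Operators} essentially line by line.

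The genuine gap is in your Step 1, which is exactly the content of the paper's Theorem \ref{Thm:Characterization} and is the hard part. First, your ``only if'' sketch rests on ``$\partial_t^k\P_t$ has a kernel with $k$ vanishing moments'', which is false for Schr\"odinger operators: constants are not preserved, and $\int_{\Real^n}\partial_t^k\P_t(x,y)\,dy\neq 0$ in general. What is true, and what the paper uses, is the quantitative near-cancellation $\left|\int_{\Real^n}t^\beta\partial_t^\beta\P_t(x,y)\,dy\right|\lesssim (t/\rho(x))^{\delta'}(1+t/\rho(x))^{-N}$ with $\delta'>\alpha$ (possible because $q>n$ gives $\delta=2-n/q>1$), combined with the $[f]_{M^\alpha_\L}$ bound; this is fixable but not what you wrote. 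Second, and more seriously, the ``if'' direction cannot be obtained by simply ``integrating up in $t$'': to recover the H\"older seminorm one needs spatial regularity of $\P_tf$, and the $\L$--Poisson kernel is only $C^{\delta'}$ in $x$ (no gradient estimates are available without further work on Riesz transforms), so the classical Stein argument does not transfer. The paper proves this implication only by passing through the Carleson condition and the $H^{n/(n+\alpha)}_\L$--$BMO^\alpha_\L$ duality together with the Campanato identification $BMO^\alpha_\L=C^{0,\alpha}_\L$ and a reproducing formula; its more elementary pointwise variant (Proposition \ref{Prop:point Lambda}, via comparison of $\P_t$ with the classical $P_t$) additionally assumes $f\in L^\infty$, which you cannot assume here since the functions to which you must apply the characterization, e.g. $\L^{-\sigma/2}f$ with $|\L^{-\sigma/2}f|\lesssim\rho^{\alpha+\sigma}$, need not be bounded when $\rho$ is unbounded. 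As it stands, your proposal therefore assumes, rather than proves, the key characterization; everything downstream of it is correct and coincides with the paper.
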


In order to prove Theorem \ref{Thm:Operators} we shall need a characterization of functions $f$ in $C^{0,\alpha}_\L$
 by means of size and integrability properties of $\L$--harmonic extensions \eqref{L-harmonic extension} to
 the upper half space. The theory of $BMO_\L$ spaces and Carleson measures developed in \cite{DGMTZ} will
  be a central tool. In fact our result provides a
  characterization of the $\L$--H\"older classes via Carleson measures.
   Moreover, our statement not only involves first order derivatives of
   the $\L$--Poisson semigroup but {also} introduces higher and fractional order
    derivatives. The concept of fractional derivative that we give here is
    of independent interest and allows us to present a more general characterization.
     Given a positive number $\beta$, let us denote by $m$ the smallest integer which
     strictly exceeds $\beta$, that is, $[\beta]+1$. Let $F(x,t)$ be a reasonable nice function of $x\in\Real^n$ and $t>0$. We define, following C. Segovia and R. L. Wheeden \cite{Segovia-Wheeden},
\begin{equation}\label{frac deriv}
\partial_t^\beta F(x,t)=\frac{e^{-i \pi(m-\beta)}}{\Gamma(m-\beta)}
\int_0^\infty{\partial_t^mF}(x,t+r)r^{m-\beta}~\frac{dr}{r},\quad
x\in\Real^n,~t>0.
\end{equation}
Note that in the definition above $\partial_t^1=\partial_t$. The following is the second main result.

\begin{thm}\label{Thm:Characterization}
Let $0<\alpha<1$ and $f$ be a function such that
$f(x)(1+\abs{x})^{-(n+\alpha+\varepsilon)}$ is integrable for any
$\varepsilon>0$. Fix any $\beta>\alpha$ and assume that $q>n$. The
following statements are equivalent:
\begin{enumerate}[(i)]
    \item $f\in C^{0,\alpha}_\L$.
    \item There exists a constant $c_{1,\beta}$ such that $\|t^\beta\partial_t^\beta\P_tf\|_{L^\infty(\mathbb{R}^n)}\leq c_{1,\beta}t^\alpha$.
    \item There exists a constant $c_{2,\beta}$ such that for all balls $B=B(x_0,r)$ in $\Real^n$,
     $$\left(\frac{1}{\abs{B}}\int_{\widehat{B}}|t^\beta\partial_t^\beta\P_tf(x)|^2~\frac{dx~dt}{t}\right)^{1/2}
     \leq c_{2,\beta}\abs{B}^{\frac{\alpha}{n}},$$
    where $\widehat{B}$ denotes the tent over $B$ defined by $\set{(x,t): x\in B, and ~{0< t\le r}}$.
\end{enumerate}
Moreover, the constants $c_{1,\beta}$, $c_{2,\beta}$ and $\norm{f}_{{C^{0,\alpha}_\L}}$ above are comparable.
\end{thm}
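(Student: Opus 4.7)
The plan is to prove the cyclic chain (i)$\Rightarrow$(ii)$\Rightarrow$(iii)$\Rightarrow$(i) with explicit control on the constants, from which the comparability of $c_{1,\beta}$, $c_{2,\beta}$ and $\norm{f}_{C^{0,\alpha}_\L}$ follows. Before addressing the implications I would assemble the necessary pointwise bounds for the kernel of $\partial_t^\beta\P_t$: via Bochner subordination from the Dziubanski--Zienkiewicz $\L$-heat kernel estimates and the definition \eqref{frac deriv}, one obtains
$$|\partial_t^\beta\P_t(x,y)|\leq\frac{C_N\,t}{(t+|x-y|)^{n+1+2\beta}}\left(1+\frac{t+|x-y|}{\rho(x)}\right)^{-N}$$
for any $N>0$, together with the crucial estimate $|\partial_t^\beta\P_t 1(x)|\leq Ct^{-\beta}\min\{(t/\rho(x))^\delta,1\}$ for some $\delta>0$; this latter bound is the only departure from the classical setting, where $\P_t 1\equiv 1$ and hence $\partial_t^\beta\P_t 1=0$.

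For (i)$\Rightarrow$(ii) I would split
$$\partial_t^\beta\P_t f(x)=\int[f(y)-f(x)]\,\partial_t^\beta\P_t(x,y)\,dy+f(x)\,\partial_t^\beta\P_t 1(x).$$
The first integral is $\lesssim[f]_{C^{\alpha}}t^{\alpha-\beta}$ by combining the Hölder hypothesis with the kernel decay; the second, after dividing at $t=\rho(x)$, is $\lesssim[f]_{M^\alpha_\L}t^{\alpha-\beta}$, using the non-classical estimate on $\partial_t^\beta\P_t 1$ when $t\leq\rho(x)$ and the trivial bound together with $\rho(x)^\alpha\leq t^\alpha$ when $t\geq\rho(x)$. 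The step (ii)$\Rightarrow$(iii) is immediate integration: under (ii),
$$\left(\frac{1}{|B|}\int_{\widehat B}|t^\beta\partial_t^\beta\P_t f(x)|^2\,\frac{dx\,dt}{t}\right)^{1/2}\leq c_{1,\beta}\left(\int_0^r t^{2\alpha-1}\,dt\right)^{1/2}\sim c_{1,\beta}|B|^{\alpha/n}.$$

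The serious work is in (iii)$\Rightarrow$(i). I would derive a Calder\'on-type reproducing formula starting from
$$\Gamma(2\beta)\,\Id=\int_0^\infty t^{2\beta-1}\L^\beta e^{-t\sqrt{\L}}\,dt=\int_0^\infty t^{2\beta}\,\partial_t^{2\beta}\P_t\,\frac{dt}{t},$$
regularized on $[\varepsilon,1/\varepsilon]$ to secure convergence under the growth hypothesis on $f$, and integrated by parts in $t$ to write $f(x)-f(y)$ as a weighted integral of $t^\beta\partial_t^\beta\P_t f$ against a Poisson-type kernel in the upper half space. Splitting the $t$-integration at $t=|x-y|$ and applying Cauchy-Schwarz together with the Carleson hypothesis (iii) on appropriately sized tents yields $|f(x)-f(y)|\leq Cc_{2,\beta}|x-y|^\alpha$. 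The $[f]_{M^\alpha_\L}$ component is then extracted by combining this Hölder estimate with the reproducing formula applied at a single point, using the growth hypothesis to fix an integration constant, and evaluating at scale $t\sim\rho(x)$ where the Carleson condition converts into a pointwise bound $|f(x)|\leq Cc_{2,\beta}\rho(x)^\alpha$.

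The principal obstacle is (iii)$\Rightarrow$(i): justifying convergence of the reproducing formula under only the mild growth $f(x)(1+|x|)^{-(n+\alpha+\varepsilon)}\in L^1$; obtaining second-variable cancellation estimates for $\partial_t^\beta\P_t(x,y)-\partial_t^\beta\P_t(z,y)$ from \eqref{frac deriv} sharp enough to extract the correct Hölder seminorm at scale $|x-y|$; and recovering both $[f]_{C^{\alpha}}$ and $[f]_{M^\alpha_\L}$ from a single Carleson condition. The secondary technical difficulty is the non-classical bound $|\partial_t^\beta\P_t 1(x)|\lesssim t^{-\beta}(t/\rho(x))^\delta$ used in (i)$\Rightarrow$(ii), which is not evident from \eqref{frac deriv} alone and requires propagating Shen's critical radius theory (and hence the reverse Hölder hypothesis on $V$) through the fractional derivative definition.
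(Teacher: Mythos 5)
Your (i)$\Rightarrow$(ii) and (ii)$\Rightarrow$(iii) are essentially the paper's own argument: split off $f(x)\int_{\Real^n}\partial_t^\beta\P_t(x,z)\,dz$, control it by the cancellation estimate of Proposition \ref{Prop:Poisson est}\textit{(d)} (which does follow from Lemma \ref{Lem:Q est}\textit{(c)}, so the ``non-classical'' bound you flag is available; note one needs $\delta'>\alpha$, which is where $q>n$ enters), and integrate trivially over the tent. One inaccuracy: your proposed kernel bound $|\partial_t^\beta\P_t(x,y)|\lesssim t\,(t+|x-y|)^{-(n+1+2\beta)}$ has the wrong homogeneity — already for the classical Poisson kernel $\partial_t^\beta P_t(x,x)\sim t^{-n-\beta}$, so in the regime $1\ll t\ll\rho(x)$ your bound fails; the correct estimate is $|t^\beta\partial_t^\beta\P_t(x,y)|\lesssim t^\beta(t+|x-y|)^{-(n+\beta)}$ times the $\rho$-decay factor, as in Proposition \ref{Prop:Poisson est}\textit{(b)}, and with that correction your computations for (i)$\Rightarrow$(ii) go through.

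The genuine gap is (iii)$\Rightarrow$(i), which is the substance of the theorem, and your proposal does not prove it: the two decisive steps are listed by you as ``principal obstacles'' rather than resolved. First, the assertion that ``at scale $t\sim\rho(x)$ the Carleson condition converts into a pointwise bound $|f(x)|\le C\rho(x)^\alpha$'' is not a valid deduction: (iii) is an $L^2$ average over tents and gives no pointwise control of $t^\beta\partial_t^\beta\P_tf$ whatsoever, so the $[f]_{M^\alpha_\L}$ component can only be reached through a rigorously justified integral representation (or, as the paper does, through the large-ball condition built into $BMO^\alpha_\L$, reached by testing against $H^{\frac{n}{n+\alpha}}_\L$-atoms without cancellation). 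Second, making the Calder\'on-type reproducing formula legitimate for $f$ satisfying only $f(x)(1+|x|)^{-(n+\alpha+\varepsilon)}\in L^1$ is exactly the hard technical content of the paper's Lemma \ref{Lem:reproduce}: it needs the auxiliary Lemma \ref{lem1} and a delicate analysis of the truncated kernel $\int_{\epsilon}^\infty t^{2\beta}\partial_t^{2\beta}\P_{2t}(x,y)\,\frac{dt}{t}$ in the three cases $2\beta<1$, $2\beta=1$, $2\beta>1$; nothing in your sketch supplies this, nor the second-variable regularity estimates you would need to convert the Carleson data into the H\"older seminorm (the paper only proves first-variable smoothness, Proposition \ref{Prop:Poisson est}\textit{(c)}, and uses it on atoms, not on $f$). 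For comparison, the paper sidesteps any direct pointwise argument: it shows $g\mapsto\int fg$ is bounded on $H^{\frac{n}{n+\alpha}}_\L$ via the reproducing formula, the Harboure–Salinas–Viviani tent-space inequality, and an $L^{\frac{n}{n+\alpha}}$ bound for the area function of atoms, then concludes with the duality $H^{\frac{n}{n+\alpha}}_\L$–$BMO^\alpha_\L$ and the identification $BMO^\alpha_\L=C^{0,\alpha}_\L$; your direct route may be salvageable, but as written it leaves the heart of the implication unproved.
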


Some observations are in order. The integrability condition required
on $f$ in Theorem \ref{Thm:Characterization} implies that the
$\L$--harmonic extension $\P_tf$ is well defined, see Proposition
\ref{Prop:Poisson est}\textit{(a)} below.
 Such a condition is weaker than to ask for $f$ to be bounded (as in the classical case, see \cite{SteinSingular}) or even to
  have the growth $\abs{f(x)}\leq C\rho(x)^\alpha$ that appears in the
definition of $\L$--H\"older space above, see Lemma
\ref{crucial}\textit{(i)}. The Carleson property \textit{(iii)} can
be proved since there is an available  Campanato-type description of
$C^{0,\alpha}_\L$. This identification was proved by Bongioanni,
Harboure and Salinas in \cite{Bongioanni-Harboure-Salinas-weighted},
see Proposition \ref{Prop:BMO y Calpha}.

Under the light of Definition \ref{defC} and Theorem \ref{Thm:Characterization},
the natural question is how to define and characterize higher-order $\L$--H\"older spaces,
that is, spaces of the type $C^{k,\alpha}_\L$ for $k$ a positive integer. It is already known the
 characterization of classical $C^{k,\alpha}$ spaces by size properties of harmonic extensions,
 see \cite{SteinSingular}. In the case of the harmonic oscillator $H=-\Delta+\abs{x}^2$,
  the definition of the H\"older spaces $C^{k,\alpha}_H$ was given in \cite{Stinga-Torrea}.
In the case of general potentials $V$, because of the lack of
smoothness we will not try to consider higher-order $\L$--H\"older
spaces. Nevertheless, as it happens in the classical case
\cite{SteinSingular}, we could define higher-order spaces by using
property \textit{(ii)} of Theorem \ref{Thm:Characterization} in the
following way. Let $\alpha>0$ and fix any $\beta>\alpha$. Then we
would say that a function $f$ belongs to the $\L$--H\"older space
$\Lambda^\alpha_\L$ if
$\|t^\beta\partial_t^\beta\P_tf\|_{L^\infty(\Real^n)}\leq
Ct^\alpha$. Note that this new concept depends on the choice of
$\beta$, but in fact we can show that it does not, see Lemma
\ref{Lem:equivalence} below. If $0<\alpha<1$ then the definition
agrees with Definition \ref{defC}. But when $\alpha>1$ and $V$ is
not smooth it is not clear how to give an equivalent pointwise
formulation to measure the smoothness of $f$ as in the classical
way. For the potential $V=\abs{x}^2$ some results in this direction
can be obtained and they will appear in a forthcoming work.

The condition $q>n$ in Theorem \ref{Thm:Characterization} seems to
be natural if we expect to have some regularity for the operators
involved. See Z. Shen \cite{Shen} for a discussion in $L^p$ and
\cite{Bongioanni-Harboure-Salinas-Riesz} in the $BMO^\alpha_\L$
context.

We also consider the extreme values of $\alpha$. Note that the conclusion of Theorem \ref{Thm:Characterization} above is not valid in the cases $\alpha=1$ or $\alpha=0$. In fact, we have the following results for $\alpha=1$:

\begin{thm}[Case $\alpha=1$]\label{Thm:alpha 1}
Assume that $q>n$.
\begin{enumerate}[(I)]
\item  If $f\in C^{0,1}_\L$ then for any
$\beta>1$ there exists a constant $c_\beta$ such that
$$\left(\frac{1}{\abs{B}}\int_{\widehat{B}}|t^\beta\partial_t^\beta\P_tf(x)|^2~\frac{dx~dt}{t}\right)^{1/2}
\leq c_\beta\abs{B}^{\frac{1}{n}},$$ for all balls $B$. The converse statement is not true.
\item Let $\L_\mu=-\Delta+\mu$, for $\mu>0$. There exists a function $f$ such that for any $\beta>1$ there exists a
 constant $c_\beta$ that verifies $\|t^\beta \partial^\beta_t \P_tf \|_{L^\infty(\mathbb{R}^n)}
 \le c_\beta t$, for all $t>0$, but $f\notin C^{0,1}_{\L_\mu}$.
\end{enumerate}
\end{thm}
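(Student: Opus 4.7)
The plan is to establish (I) by extending the implication $(i)\Rightarrow(iii)$ of Theorem~\ref{Thm:Characterization} to the endpoint $\alpha = 1$, and to settle (II) by an explicit Hardy--Zygmund construction whose $\L_\mu$-Poisson extension decomposes mode-by-mode.

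For (I), assume $f\in C^{0,1}_\L$. First, I would invoke the Campanato-type description at the endpoint (the $\alpha = 1$ analogue of Proposition~\ref{Prop:BMO y Calpha}), which gives $|B|^{-1}\int_B |f-f_B|\,dx\le C r_B$ on balls $B\subset B(x_0,\rho(x_0))$ and $|f_B|\le C\rho(x_0)$ otherwise. For a fixed ball $B = B(x_0,r)$, decompose
\[
  f = (f-f_B)\chi_{4B} + (f-f_B)\chi_{(4B)^c} + f_B,
\]
and apply $t^\beta\partial_t^\beta \P_t$ to each piece. The local piece is absorbed by the $L^2$ square-function estimate for $t^\beta\partial_t^\beta\P_t$, obtained from spectral calculus on $\sqrt{\L}$. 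The far piece is handled by pointwise kernel bounds for $\partial_t^\beta\P_t(x,y)$ from \cite{DGMTZ} combined with the Campanato control on annuli $2^jB$. The constant piece produces $f_B\cdot t^\beta\partial_t^\beta \P_t 1$, which contributes $O(|B|^{1/n})$ because the positivity of $V$ forces $\P_t 1$ to decay exponentially in $t/\rho$. Summing the three contributions gives the Carleson bound with exponent $1$. The failure of the converse is witnessed directly by the function constructed in Part (II).

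For (II), work in one spatial variable and set
\[
  g(x) = \sum_{k=1}^{\infty} 2^{-k}\cos(2^k x), \qquad x\in\Real,
\]
then let $f(x_1,\dots,x_n)=g(x_1)$. This $f$ is bounded by $1$ and continuous, so satisfies the integrability hypothesis of Theorem~\ref{Thm:Characterization}; however $g$ is the classical Hardy--Weierstrass series with $ab=1$, hence nowhere differentiable and therefore not Lipschitz. Since $\rho$ is constant for $\L_\mu$, this forces $f\notin C^{0,1}_{\L_\mu}$. Next, because $\L_\mu$ is translation-invariant and $\cos(2^k x_1)$ is a generalized eigenfunction with eigenvalue $\lambda_k^2=4^k+\mu$, the harmonic extension is
\[
  \P_t f(x) = \sum_{k=1}^{\infty} 2^{-k} e^{-t\lambda_k}\cos(2^k x_1),
\]
which converges uniformly. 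Applying \eqref{frac deriv} term-by-term to $e^{-t\lambda_k}$ yields $\partial_t^\beta e^{-t\lambda_k} = e^{i\pi\beta}\lambda_k^\beta e^{-t\lambda_k}$ (the computation collapses through a single $\Gamma$-integral), and therefore
\[
  |t^\beta\partial_t^\beta \P_t f(x)|\le t\sum_{k=1}^{\infty} (2^{-k}\lambda_k)\,(t\lambda_k)^{\beta-1}e^{-t\lambda_k}.
\]
Since $2^{-k}\lambda_k$ is bounded uniformly in $k$ and the sum is a Riemann-sum comparison to the convergent integral $\int_0^\infty v^{\beta-2}e^{-v}\,dv=\Gamma(\beta-1)$ (convergent iff $\beta>1$), we obtain $\|t^\beta\partial_t^\beta\P_t f\|_{L^\infty}\le c_\beta t$ for every $\beta>1$.

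The main obstacles I anticipate are: in (I), establishing the endpoint Campanato identification and the $L^2$ square-function estimate for fractional derivatives in the Schr\"odinger setting, where $V$ is only $RH_q$ with no smoothness, and handling the different scales of $\rho$ uniformly across balls; in (II), cleanly distinguishing the Zygmund regularity of $g$ from genuine Lipschitz regularity, and ensuring the Riemann-sum comparison is uniform in $t\in(0,\infty)$ so that the bound $\le c_\beta t$ persists both as $t\to 0^+$ and as $t\to\infty$.
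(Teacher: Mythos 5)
Your Part (II) is correct but proceeds by a genuinely different route than the paper. The paper verifies the Zygmund condition $\abs{f(x+y)+f(x-y)-2f(x)}\leq C\abs{y}$ for the lacunary series and then passes through Proposition \ref{Prop:point Lambda} (which rests on Lemmas \ref{Lem:equivalence} and \ref{lem:5.1}, i.e.\ the comparison of $\P_t$ with the classical Poisson semigroup and Stein's characterization of the Zygmund class), and it rules out Lipschitz regularity by a two-line Bessel/Parseval computation. You instead exploit that $\L_\mu$ has constant coefficients: $\cos(2^k x_1)$ is a generalized eigenfunction, $\partial_t^\beta e^{-t\lambda}=e^{i\pi\beta}\lambda^\beta e^{-t\lambda}$ under \eqref{frac deriv} (this identity is exactly right), and the lacunary sum $\sum_k(t\lambda_k)^{\beta}2^{-k}e^{-t\lambda_k}\leq Ct\sum_k(t\lambda_k)^{\beta-1}e^{-t\lambda_k}\leq c_\beta t$ is uniform in $t$ precisely because $\beta>1$ (split at $t2^k\leq 1$). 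This is more elementary and self-contained than the paper's route for the positive estimate; on the negative side you invoke Hardy's nowhere-differentiability theorem plus Rademacher, which is legitimate but heavier than necessary --- the Parseval argument of the paper (or the standard fact that a lacunary series in $\mathrm{Lip}\,1$ must have $\sum(2^k\abs{c_k})^2<\infty$) gives it directly. Your observation that the same function witnesses the failure of the converse in (I) is the paper's argument as well, since the $L^\infty$ bound trivially implies the Carleson bound with exponent $1$.

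In Part (I) there is a genuine soft spot. First, note the paper's proof of (I) is essentially free: the arguments for \textit{(i)}$\Rightarrow$\textit{(ii)}$\Rightarrow$\textit{(iii)} in Theorem \ref{Thm:Characterization} go through verbatim at $\alpha=1$ because $q>n$ gives $\delta=2-n/q>1$, so one can take $1<\delta'\leq\delta$ in Proposition \ref{Prop:Poisson est}\textit{(d)}. Your direct Carleson argument modeled on the $\alpha=0$ proof is viable (local piece by the $L^2$ isometry, far piece by Proposition \ref{Prop:Poisson est}\textit{(b)} and the annuli sums, which converge since $\beta>1$), but your treatment of the constant piece is based on the wrong mechanism: you claim $f_B\,t^\beta\partial_t^\beta\P_t1$ is controlled because ``positivity of $V$ forces $\P_t1$ to decay exponentially in $t/\rho$''. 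Decay for large $t/\rho$ is irrelevant for the problematic balls, namely $B=B(x_0,r)$ with $r\ll\rho(x_0)$, where $\abs{f_B}$ can be as large as $C\rho(x_0)$ (Proposition \ref{Prop:from small balls}) while over the tent one only has $t\leq r\leq\rho$. What is needed there is the cancellation estimate $\abs{\int t^\beta\partial_t^\beta\P_t(x,y)\,dy}\leq C(t/\rho(x))^{\delta'}(1+t/\rho(x))^{-N}$ with an exponent $\delta'>1$, coming from the perturbation bound of $k_t$ against the free heat kernel (Lemma \ref{Lem:Schwartz}, Lemma \ref{Lem:Q est}\textit{(c)}), not from positivity of $V$; only then does the constant piece contribute $\rho(x_0)(r/\rho(x_0))^{\delta'}\leq r=\abs{B}^{1/n}$. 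With $\delta'\leq 1$ (e.g.\ if one only had $n/2<q\leq n$) this term is $r^{\delta'}\rho^{1-\delta'}\gg r$, so this is exactly where the hypothesis $q>n$ enters and your sketch, as written, does not capture it.
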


It has no sense to take $\alpha=0$ as a H\"older exponent. By the
Campanato-type description of Proposition \ref{Prop:BMO y Calpha} we
see that the natural replacement in this situation is the space
$BMO_\L$.

\begin{thm}[Case $\alpha=0$]\label{Thm:alpha 0}
Assume that $q>n$.
\begin{enumerate}[(A)]
\item  A function $f$ is in $BMO_\L$ if and only if for $f$ being a function such that $f(x)(1+\abs{x})^{-(n+\varepsilon)}$ is integrable for any $\varepsilon>0$, and  for all $\beta>0$ there exists a
constant $c_\beta$ such that, for all balls $B$,
$$\left(\frac{1}{\abs{B}}\int_{\widehat{B}}|t^\beta\partial_t^\beta\P_tf(x)|^2~\frac{dx~dt}{t}\right)^{1/2}
\leq c_\beta.$$
\item Let $\L_\mu=-\Delta+\mu$, for $\mu>0$. There exists a function $f\in
BMO_{\L_\mu}$ such that, for some $\beta>0$, $\sup_{t>0}|t^\beta\partial_t^\beta\P_tf(0)|=\infty$.
\end{enumerate}
\end{thm}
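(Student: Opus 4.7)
The plan is to mirror the proof of Theorem \ref{Thm:Characterization} at the endpoint $\alpha=0$, using in its place the Campanato-type description of $BMO_\L$ due to Bongioanni--Harboure--Salinas (a consequence of Proposition \ref{Prop:BMO y Calpha} with $\alpha=0$) together with the theory of $H^1_\L$--$BMO_\L$ duality and of Carleson measures on the $\L$--upper half space developed in \cite{DGMTZ}.

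For the forward implication of (A), I first observe that any $f\in BMO_\L$ automatically satisfies the integrability condition $f(x)(1+\abs{x})^{-(n+\varepsilon)}\in L^1$ thanks to the local-average bounds in its definition. Fixing a ball $B=B(x_0,r)$, I decompose
\[
f = (f-c_B)\chi_{4B} + (f-c_B)\chi_{(4B)^c} + c_B,
\]
with $c_B=f_B$ when $r\le \rho(x_0)$ and $c_B=0$ when $r>\rho(x_0)$. The local part is handled by an $L^2$ square-function estimate for $t^\beta\partial_t^\beta\P_t$, combined with the $BMO_\L$ bound on $(f-c_B)\chi_{4B}$ over $4B$. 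The nonlocal part is treated by a dyadic annular decomposition using pointwise kernel estimates for $\partial_t^\beta\P_t$ deduced from \eqref{frac deriv}, the subordination formula, and the Kurata--Sugano type bounds for the $\L$--heat and $\L$--Poisson kernels collected in \cite{DGMTZ}. The constant term $c_B\,t^\beta\partial_t^\beta\P_t(1)$, which is genuinely nonzero since $\L 1 = V$, is controlled using the decay of $\P_t(1)$ at a rate governed by $\rho$ together with the fact that $c_B$ vanishes precisely in the regime $r>\rho(x_0)$. For the converse, I would use a Calder\'on-type reproducing formula
\[
f = c_\beta \int_0^\infty t^{2\beta}\partial_t^{2\beta}\P_{2t}f \,\frac{dt}{t}
\]
(no polynomial annihilator is needed here because the $\L$--Poisson semigroup does not preserve nonzero constants) and pair it with tent-space duality against the $H^1_\L$ atoms of \cite{DGMTZ} in order to recover the Campanato description of $BMO_\L$ from the Carleson estimate.

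For part (B), take $\L_\mu=-\Delta+\mu$, whose critical radius $\rho\sim \mu^{-1/2}$ is a constant, so that in particular $L^\infty\subset BMO_{\L_\mu}$. The idea is to build a bounded, radial, rapidly oscillating function concentrated near the origin, for instance a lacunary sum of characteristic functions of shrinking annuli
\[
f(x) = \sum_{k\ge 1}\epsilon_k\,\chi_{A_k}(x), \qquad A_k=\set{2^{-k-1}\le \abs{x} < 2^{-k}},
\]
with signs/amplitudes $\epsilon_k$ to be fixed, and then to evaluate $t_k^\beta\partial_t^\beta\P_{t_k}f(0)$ along a sequence $t_k\sim 2^{-k}$ using the explicit $\L_\mu$--Poisson kernel (obtained from the Gaussian heat kernel by Bochner subordination) together with \eqref{frac deriv}. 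One checks that a single annulus $A_k$ provides the dominant contribution at the scale $t_k$ while tails from $A_j$, $j\neq k$, remain under control, so that a suitable choice of $\epsilon_k$ forces $\sup_t|t^\beta\partial_t^\beta\P_tf(0)|=\infty$ for some $\beta>0$.

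The main obstacle is twofold. In (A), the genuinely new point with respect to the classical Stein theory is the treatment of the constant contribution $c_B\,t^\beta\partial_t^\beta\P_t(1)$: this is where the reverse H\"older hypothesis on $V$ and the critical radius $\rho(x)$ really enter, through the decay of $\P_t(1)$ in the two regimes $r\lessgtr\rho(x_0)$. In (B), the delicate point is establishing a quantitative lower bound for $|t_k^\beta\partial_t^\beta\P_{t_k}f(0)|$ along $t_k\to 0$, since contributions from distinct annuli could in principle cancel; overcoming this requires sharp two-sided estimates for the kernel of $\partial_t^\beta\P_t$ at the origin and a careful engineering of the scales and amplitudes in the lacunary construction.
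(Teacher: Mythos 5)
Your part (A) follows essentially the paper's route: split $f$ into a local piece (handled by the $L^2$ boundedness of the area function together with the John--Nirenberg observation of Remark \ref{Rem:equiv norm p}), a far piece (dyadic annuli plus the kernel bound of Proposition \ref{Prop:Poisson est}\textit{(b)}), and a constant piece controlled through the decay of $t^\beta\partial_t^\beta\P_t1$ from Proposition \ref{Prop:Poisson est}\textit{(d)}; the converse is indeed delegated to the $H^1_\L$--$BMO_\L$ Carleson-measure theory of \cite{DGMTZ} (one value of $\beta$ suffices, and the paper cites $\beta=1$). The one point you gloss over is the constant term in the regime $r\le\rho(x_0)$: there $\abs{f_B}$ is \emph{not} bounded by $\norm{f}_{BMO_\L}$, but only by $C\left(1+\log\tfrac{\rho(x_0)}{r}\right)\norm{f}_{BMO_\L}$ (Remark \ref{rem:3.4}), and it is precisely the factor $(t/\rho(x))^{\delta'}\le C(r/\rho(x_0))^{\delta'}$ on the tent that absorbs this logarithm. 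That detail must be stated, but it does not change the architecture.

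Part (B) has a genuine gap: your proposed counterexample is a \emph{bounded} function (a lacunary sum of indicators of annuli with bounded amplitudes, placed in $BMO_{\L_\mu}$ via $L^\infty\subset BMO_{\L_\mu}$), and no bounded function can work. Indeed, Proposition \ref{Prop:Poisson est}\textit{(b)} gives
$$\int_{\Real^n}|t^\beta\partial_t^\beta\P_t(x,y)|~dy\leq C_\beta\quad\text{uniformly in } x\in\Real^n,\ t>0,$$
so $\sup_{t>0}|t^\beta\partial_t^\beta\P_tf(0)|\leq C_\beta\norm{f}_{L^\infty}<\infty$ for every $f\in L^\infty$; no engineering of the signs $\epsilon_k$ or of the scales $t_k$ can produce a blow-up, and cancellation between annuli is beside the point at the $\alpha=0$ endpoint (lacunary oscillation is the right mechanism for the $\alpha=1$ endpoint, Theorem \ref{Thm:alpha 1}\textit{(II)}, not here). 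What is needed is an \emph{unbounded} $BMO$ function: the paper takes $n=1$, $\beta=1$ and $f(x)=\max\{\log\tfrac{1}{\abs{x}},0\}$, which belongs to $BMO_{\L_\mu}$ because $\rho\equiv\mu^{-1/2}$ and $f$ is integrable and classically $BMO$, and a direct computation with the subordinated kernel splits $t\partial_t\P_tf(0)$ into a bounded term plus a term of size $c\abs{\log t}$, which tends to infinity as $t\to0$. If you let your amplitudes grow, say $\epsilon_k\sim k$, your construction essentially rebuilds this logarithm, but then membership in $BMO_{\L_\mu}$ must be argued through mean oscillation rather than boundedness.
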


We should notice that the proof of Theorem \ref{Thm:Operators} is relatively simple and it can be
presented rather quickly. This is in a big contrast with the proof given in \cite{Stinga-Torrea}
 for the case of the harmonic oscillator $H=-\Delta+|x|^2$. In \cite{Stinga-Torrea} pointwise
formulas of $H^{\pm\sigma}$ and Hermite-Riesz transforms must be
handled. In our proof of Theorem \ref{Thm:Operators}\textit{(a)} and
\textit{(b)} no Riesz transforms are needed. On the other hand, the
results in \cite{Stinga-Torrea} involve higher order spaces
$C^{k,\alpha}_H$. As we pointed out before, if we would like to have
higher order spaces then we should consider the spaces of the type
$\Lambda_\L^\alpha$ mentioned above. With such a description it is
very simple to extend the results of Theorem \ref{Thm:Operators} to
hold for all $\alpha,\sigma>0$ (with the appropriate relations
between them). But in this way still there is no pointwise
smoothness condition on the functions $f\in\Lambda^\alpha_\L$, which
are necessary in PDEs.

The organization of the paper is as follows. In Section \ref{Section:Boundedness}, in order to convince the reader how useful our method is, we present the proof of Theorem \ref{Thm:Operators}. In fact for those who are just interested
  in regularity properties of operators, this is the most important section.
  In Section \ref{Section:Preliminaries} we list a collection of estimates about Schr\"odinger
  kernels that we will need later. Some of them are known and we put them there  to make the
   paper more readable, but there are some new (although expectable) estimates, like those of
    Proposition \ref{Prop:Poisson est}. Section \ref{Section:BMO}  is a technical
    section about $BMO_\L^\alpha$ spaces and section \ref{Section:Proof} contains the proofs of Theorem \ref{Thm:Characterization}, \ref{Thm:alpha 1}  and \ref{Thm:alpha 0}.

Throughout this paper, the letters $c$ and $C$ denote positive constants that may change in each occurrence and they will depend on
the parameters involved (whenever it is necessary, we point out this
dependence with subscripts). The Gamma and Beta functions will be
denoted by $\Gamma$ and $\mathrm{B}$, respectively. Without
mentioning it, we will repeatedly apply the inequality $r^\eta
e^{-r}\leq C_\eta e^{-r/2}$, $\eta\geq0$, $r>0$. 

\section{Regularity of operators related to $\L$}\label{Section:Boundedness}

In this section we prove Theorem \ref{Thm:Operators}. First we need the following technical lemma.

\begin{lem}\label{crucial} Let $0< \gamma <1$, and $g$ be a continuous
 function such that $|g(x)| \le C\rho(x)^\gamma $, where $\rho $ is the critical radii function defined in \eqref{critical}. Then
\begin{enumerate}[(i)]
    \item For any $\varepsilon>0$, the function $g(x)(1+\abs{x})^{-(n+\gamma+\varepsilon)}$ is integrable.
    \item  For any $\beta > \gamma$ and any $N>0$ there exists a constant $C_{\beta,N,g}$ such that
        $$|s^\beta \partial^\beta_s\P_s g(x)|\le C_{\beta,N,g}\left(\rho(x)/s\right)^N\left(\rho(x)^{\gamma}+{s^{\gamma}}\right),\quad x\in\Real^n,~s>0.$$
    \item For any $N>0$ there exists a constant $C_{N,g}$ such that
        $$|\P_s g(x)|\le C_{N,g}\left(\rho(x)/s\right)^N\left(\rho(x)^{\gamma}+{s^{\gamma}}\right),\quad x\in\Real^n,~s>0.$$
\end{enumerate}
\end{lem}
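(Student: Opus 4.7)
The plan is to reduce all three assertions to an interplay between the growth hypothesis $|g(x)|\le C\rho(x)^\gamma$ and the slow variation of the critical radii function, together with the pointwise estimates on the $\L$--Poisson kernel $P_s(x,y)$ and its $t$-derivatives that will be collected in Proposition \ref{Prop:Poisson est}.

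For part \textit{(i)}, I would invoke the quantitative comparability of $\rho$ established by Shen: there exists $k_0\ge 1$ such that, for every $x,y\in\Real^n$,
$$\rho(y)\le C\rho(x)\bigl(1+|x-y|/\rho(x)\bigr)^{k_0/(k_0+1)}.$$
Taking $x=0$ gives $\rho(y)\le C(1+|y|)$, hence $|g(y)|\le C(1+|y|)^\gamma$, from which integrability of $g(y)(1+|y|)^{-(n+\gamma+\varepsilon)}$ follows immediately.

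For parts \textit{(ii)} and \textit{(iii)} I would start from the integral representation $s^\beta\partial_s^\beta\P_s g(x)=\int s^\beta\partial_s^\beta P_s(x,y)\,g(y)\,dy$ and appeal to a kernel bound of the form
$$|s^\beta\partial_s^\beta P_s(x,y)|\le C_N\,\frac{s}{(s+|x-y|)^{n+1}}\,\Bigl(\frac{\rho(x)}{s+\rho(x)}\Bigr)^N,$$
valid for every $N\ge 0$ (and its $\beta=0$ analogue for \textit{(iii)}). Such an estimate comes from Bochner's subordination formula \eqref{subordinacion} applied to the Schr\"odinger heat kernel bounds with polynomial decay $(1+s/\rho(x))^{-N}$ collected in Section \ref{Section:Preliminaries}. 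The elementary inequality $\rho(x)/(s+\rho(x))\le \rho(x)/s$ then yields the $(\rho(x)/s)^N$ factor appearing in the statement. Combining this with Shen's inequality gives $\rho(y)^\gamma\le C(\rho(x)^\gamma+|x-y|^\gamma)$, and substitution leads to two integrals. The first, with $\rho(x)^\gamma$ factored out, reduces to $\int s(s+|x-y|)^{-(n+1)}\,dy\le C$, producing the $\rho(x)^\gamma$ term. For the second, $\int s|x-y|^\gamma(s+|x-y|)^{-(n+1)}\,dy$, I would decompose dyadically into shells $\{2^k s\le|x-y|<2^{k+1}s\}$ for $k\ge -1$; each shell contributes a multiple of $s^\gamma 2^{k(\gamma-1)}$, and the series sums to a constant times $s^\gamma$ because $\gamma<1$. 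Putting the pieces together and multiplying by $(\rho(x)/s)^N$ gives the bound of \textit{(ii)}, while the same argument with $\beta=0$ yields \textit{(iii)}.

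The main technical obstacle is to justify the kernel estimate for the \emph{fractional} derivative $\partial_s^\beta$. Unlike the integer-order case, where one simply differentiates the subordinated Poisson kernel, here one must commute the defining integral \eqref{frac deriv} with the spatial integral in $y$ and control the tail $\int_0^\infty r^{m-\beta-1}\partial_r^m P_{s+r}(x,y)\,dr$. By Fubini and the integer-derivative estimates evaluated at the shifted argument $s+r$, this reduces to a one-dimensional integral in $r$ whose convergence is guaranteed by the weight $r^{m-\beta-1}$ since $m=[\beta]+1>\beta$; carrying out the resulting $r$-integration produces the announced dependence on $s^\beta$ and $\rho(x)/s$.
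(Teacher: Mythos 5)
Your overall strategy is the same as the paper's: part \emph{(i)} via Shen's inequality (your one-line bound $\rho(y)\le C_{\rho(0)}(1+\abs{y})$, hence $\abs{g(y)}\le C(1+\abs{y})^\gamma$, is a compressed version of the paper's dyadic computation and is correct), and parts \emph{(ii)}--\emph{(iii)} by inserting pointwise kernel bounds into $\int_{\Real^n} s^\beta\partial_s^\beta\P_s(x,y)g(y)\,dy$ together with $\rho(y)^\gamma\le C\left(\rho(x)^\gamma+\abs{x-y}^\gamma\right)$, exactly as the paper does using Proposition \ref{Prop:Poisson est} and Lemma \ref{Lem:equiv rho}. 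Part \emph{(iii)} is fine as written, since there the relevant kernel is $\P_s(x,y)$ itself, for which the decay $s/(s+\abs{x-y})^{n+1}$ of Proposition \ref{Prop:Poisson est}\emph{(a)} is available and the shell sum converges because $\gamma<1$.

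The gap is in the kernel estimate you invoke for part \emph{(ii)}. The bound $\abs{s^\beta\partial_s^\beta\P_s(x,y)}\le C_N\,s(s+\abs{x-y})^{-(n+1)}\left(\rho(x)/(s+\rho(x))\right)^N$ is false when $0<\beta<1$, a range that is allowed since the hypothesis is only $\beta>\gamma$ with $\gamma\in(0,1)$. For fractional $\beta$ the spatial decay is only of order $n+\beta$: already for the classical Poisson kernel one has $\partial_sP_{s+r}(x-y)\sim \abs{x-y}^{-(n+1)}$ for $r\lesssim\abs{x-y}$, so $\int_0^\infty \abs{\partial_s P_{s+r}(x-y)}\,r^{-\beta}\,dr\sim \abs{x-y}^{-(n+\beta)}$, and indeed your own Fubini/shift sketch in the last paragraph, if carried out, yields $s^\beta/(s+\abs{x-y})^{n+\beta}$ (this is precisely Proposition \ref{Prop:Poisson est}\emph{(b)}), not $s/(s+\abs{x-y})^{n+1}$. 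This is not cosmetic: with the correct kernel your dyadic shells contribute $s^\gamma 2^{k(\gamma-\beta)}$ rather than $s^\gamma 2^{k(\gamma-1)}$, so the series converges because $\beta>\gamma$ --- this is exactly where the hypothesis $\beta>\gamma$ enters --- and not because $\gamma<1$, which is the right condition only for part \emph{(iii)}. Once you replace your claimed bound by the true one and invoke $\beta>\gamma$ for the second integral, the argument closes and coincides with the paper's proof, which bounds the two resulting integrals directly as $C\rho(x)^{\gamma+N}s^{-N}+C\rho(x)^Ns^{-N+\gamma}$.
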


\begin{proof}
Let us begin with $\textit{(i)}$. We have to check that the integrals
$$I=\int_{\abs{x}<2\rho(0)}\frac{\abs{g(x)}}{(1+\abs{x})^{n+\gamma+\varepsilon}}
~dx+
\sum_{j=1}^\infty\int_{2^j\rho(0)\leq\abs{x}<2^{j+1}\rho(0)}\frac{\abs{g(x)}}{(1+\abs{x})^{n+\gamma+\varepsilon}}~dx,$$
are finite. To that end we apply the hypothesis and some properties
of the function $\rho$ contained in Lemma \ref{Lem:equiv rho} below.
The  inequality $\abs{x}=\abs{x-0}<2^{j+1}\rho(0)$, $j\geq0$, and
the right inequality of \eqref{2} give us $\rho(x)\leq C\rho(0)2^j$.
Therefore,
$$I\leq C\rho(0)^{\gamma+n}+ C\sum_{j=1}^\infty\frac{\left(\rho(0)2^j\right)^{\gamma+n}}{\left(1+2^j\rho(0)\right)^{n+\gamma+\varepsilon}}\leq C+C\sum_{j=1}^\infty2^{-j\varepsilon}<\infty.$$

We {will only} prove \textit{(ii)}. The third statement
\textit{(iii)} can be proved in the same way. By \textit{(i)},
$\P_tg(x)$ is well defined. By Proposition \ref{Prop:Poisson
est}\textit{(b)} and Lemma \ref{Lem:equiv rho} below, for some
constant $C=C_{\beta,N,g}$, we have
\begin{align*}
    |s^\beta & \partial^\beta_s \P_sg(x)| \le C\int_{\Real^n}\frac{s^\beta\rho(x)^N}{(s+|x-y|)^{n+\beta+N}}~\rho(x)^\gamma
        \left(1+\frac{|x-y|}{\rho(x)}\right)^{\gamma}~dy \\
     &\le C\rho(x)^{\gamma+N}\int_{\Real^n}\frac{s^\beta}{(s+|x-y|)^{n+\beta+N}}~dy+C\rho(x)^N\int_{\Real^n}\frac{s^\beta}{(s+|x-y|)^{n+\beta+N-\gamma}}~dy \\
     &= C\rho(x)^{\gamma+N}s^{-N}+C \rho(x)^{N} s^{-N+\gamma}.
\end{align*}
\end{proof}

\begin{proof}[Proof of Theorem \ref{Thm:Operators}]
We start with the proof of part \textit{(a)}.  For $f\in
C^{0,\alpha}_\L$, we have
\begin{equation}\label{fi}
\L^{-\sigma/2}f(x)=\frac{1}{\Gamma(\sigma)}\int_0^\infty\P_sf(x)~\frac{ds}{s^{1-\sigma}},\quad x\in\Real^n.
\end{equation}
By Lemma \ref{crucial}\textit{(ii)}, since $\abs{f(x)}\leq C\rho(x)^\alpha$, we get 
\begin{align*}
\int_0^\infty |\P_sf(x)|\frac{ds}{s^{1-\sigma}}
&\le C\int_0^{\rho(x)}\left[\frac{\rho(x)^{\alpha+N_1}}{s^{N_1}}+\frac{\rho(x)^{N_1}}{s^{N_1-\alpha}}\right]\frac{ds}{s^{1-\sigma}} +C\int_{\rho(x)}^\infty\left[\frac{\rho(x)^{\alpha+N_2}}{s^{N_2}}+\frac{\rho(x)^{N_2}}{s^{N_2-\alpha}}\right]\frac{ds}{s^{1-\sigma}}\\
&\le C_{N_1,N_2,\alpha,f}\cdot\rho(x)^{\alpha+\sigma},
\end{align*}
by choosing $0<N_1<\sigma$ and $N_2>\alpha+\sigma$. Hence
$\L^{-\sigma/2}f(x)$ is well defined. Moreover, it satisfies the
required growth  $|\L^{-\sigma/2}f(x)|\leq
C\rho(x)^{\alpha+\sigma}$. So Lemma \ref{crucial} applies to it. Fix
any $\beta>\alpha+\sigma$. To obtain the conclusion we apply Theorem
\ref{Thm:Characterization}. That is, it is enough to prove that
$\|t^\beta\partial_t^\beta\P_t(\L^{-\sigma/2}f)\|_{L^\infty(\mathbb{R}^n)}\leq
C\norm{f}_{C^{0,\alpha}_\L}t^{\alpha+\sigma}$. By using formula
\eqref{fi} and Lemma \ref{crucial} together with Fubini's theorem,
we have
\begin{align*}
    t^\beta\partial_t^\beta\P_t(\L^{-\sigma/2}f)(x) = Ct^\beta\int_0^\infty\partial_t^\beta\P_t(\P_sf)(x)~\frac{ds}{s^{1-\sigma}}
     = Ct^\beta\int_0^\infty\partial^\beta_w\P_wf(x)\Big|_{w=t+s}~\frac{ds}{s^{1-\sigma}}.
\end{align*}
 Since $\beta>\alpha+\sigma$ we can use Theorem \ref{Thm:Characterization} to get \textit{(a)}:
\begin{align*}
    |t^\beta\partial_t^\beta\P_t(\L^{-\sigma/2}f)(x)|  & \leq C\norm{f}_{C^{0,\alpha}_\L}
    t^\beta\int_0^\infty(t+s)^{\alpha-\beta}\frac{ds}{s^{1-\sigma}}= C\norm{f}_{C^{0,\alpha}_\L}t^{\alpha+\sigma}\int_0^\infty(1+r)^{\alpha-\beta}~\frac{dr}{r^{1-\sigma}} \\
     &= C~\mathrm{B}(\sigma,\beta-\alpha-\sigma)\norm{f}_{C^{0,\alpha}_\L}t^{\alpha+\sigma},\quad\hbox{for all}~x\in\Real^n.
\end{align*}

To prove part \textit{(b)}, fix any $\beta>\alpha$. Since $0<\sigma<\alpha<1$ we can write
\begin{equation}\label{fl}
\L^{\sigma/2}f(x)=\frac{1}{\Gamma(-\sigma)}\int_0^\infty\left(\P_sf(x)-f(x)\right)~\frac{ds}{s^{1+\sigma}}=I(x,t)+II(x,t),
\end{equation}
where $I(x,t)$ is the part of the integral from $0$ to $t$. Since $f\in C^{0,\alpha}_\L$,
$$\abs{I(x,\rho(x))}\leq\int_0^{\rho(x)}\abs{\int_0^s\partial_r\P_rf(x)~dr}~\frac{ds}{s^{1+\sigma}}\leq C\int_0^{\rho(x)}\int_0^sr^{\alpha-1}~dr~\frac{ds}{s^{1+\sigma}}= C\rho(x)^{\alpha-\sigma}.$$
Taking $N=\alpha$ in Lemma \ref{crucial}\textit{(iii)} and using the growth of $f$ we also have
$$\abs{II(x,\rho(x))}\leq \int_{\rho(x)}^\infty\left(\abs{\P_sf(x)}+\abs{f(x)}\right)~\frac{ds}{s^{1+\sigma}}\leq C\int_{\rho(x)}^\infty\left[\frac{\rho(x)^{2\alpha}}{s^\alpha}+\rho(x)^\alpha\right]~\frac{ds}{s^{1+\sigma}}= C\rho(x)^{\alpha-\sigma}.$$
The computations above say that \eqref{fl} is well defined and that Theorem \ref{Thm:Characterization} can be applied to it. By linearity, it is enough to analyze
$t^\beta\partial_t^\beta\P_tI(x,t)$ and $t^\beta\partial_t^\beta\P_tII(x,t)$ separately. Note that
$$t^\beta\partial_t^\beta \P_tI(x,t)=\frac{t^\beta}{\Gamma(-\sigma)}\int_0^t\int_0^s\partial_w^{\beta+1}\P_wf(x)\big|_{w=t+r}~dr~\frac{ds}{s^{1+\sigma}}.$$
Apply Theorem \ref{Thm:Characterization} and the fact that $\beta>\alpha$ to obtain
\begin{align}
    \nonumber &|t^\beta\partial_t^\beta\P_tI(x,t)| \leq C\norm{f}_{C^{0,\alpha}_\L}t^\beta\int_0^t\int_0^s(t+r)^{\alpha-\beta-1}~dr~\frac{ds}{s^{1+\sigma}} \\
    \label{I} &= C\norm{f}_{C^{0,\alpha}_\L}t^\alpha\int_0^t\int_0^{s/t}(1+u)^{\alpha-\beta-1}~du~\frac{ds}{s^{1+\sigma}}\leq C\norm{f}_{C^{0,\alpha}_\L}t^\alpha\int_0^t\frac{s}{t}~\frac{ds}{s^{1+\sigma}}=C\norm{f}_{C^{0,\alpha}_\L}t^{\alpha-\sigma}.
\end{align}
Theorem \ref{Thm:Characterization} and Fubini's theorem give us
\begin{align}
    \nonumber|t^\beta\partial_t^\beta\P_tII(x,t)| &\leq C\int_t^\infty\left(
    \abs{t^\beta\partial_w^{\beta}\P_wf(x)\big|_{w=t+s}}+ \abs{t^\beta\partial_t^\beta\P_tf(x)}\right)~\frac{ds}{s^{1+\sigma}} \\
    \label{II} &\le C\norm{f}_{C^{0,\alpha}_\L}\int_t^\infty
    t^\beta (t+s)^{\alpha-\beta} + t^{\alpha} ~\frac{ds}{s^{1+\sigma}}=C\norm{f}_{C^{0,\alpha}_\L}t^{\alpha-\sigma}.
\end{align}
Collecting estimates \eqref{I} and \eqref{II} we get the conclusion of \textit{(b)}.

Let us finally check \textit{(c)}. Fix any $\beta>\alpha$. Note that we have $\displaystyle m(\L)f(x)=-\int_0^\infty\partial_s\P_sf(x)~a(s)~ds$. As $a$ is a bounded function and $f\in C^{0,\alpha}_\L$,
$$\int_0^{\rho(x)}\abs{\partial_s\P_sf(x)~a(s)}~ds\leq C\int_0^{\rho(x)}s^{\alpha-1}~ds=C\rho(x)^\alpha.$$
Moreover, by Lemma \ref{crucial}\textit{(ii)} with $\beta=1$ and
some $N>\alpha$ {at there}, we obtain 
$$\int_{\rho(x)}^\infty\abs{\partial_s\P_sf(x)~a(s)}~ds\leq C\int_{\rho(x)}^\infty
\left(\frac{\rho(x)}{s}\right)^N(\rho(x)^\alpha+s^\alpha)~\frac{ds}{s}=C\rho(x)^\alpha.$$
Therefore, $\abs{m(\L)f(x)}\leq C\rho(x)^\alpha$, so by Lemma
\ref{crucial}\textit{(i)} the hypothesis of Theorem
\ref{Thm:Characterization} holds for $m(\L)f$. By Theorem
\ref{Thm:Characterization} and Fubini's theorem we have
\begin{align*}
    |t^\beta\partial_t^\beta\P_t\big(m(\L)f\big)(x)| &= t^\beta\abs{\int_0^\infty
    \partial^{\beta+1}_w\P_wf(x)\big|_{w=t+s}~a(s)~ds}\leq C\norm{f}_{C^{0,\alpha}_\L}t^\beta\int_0^\infty(t+s)^{\alpha-(\beta+1)}~ds \\
     &= C\norm{f}_{C^{0,\alpha}_\L}t^\alpha\int_0^\infty(1+r)^{\alpha-(\beta+1)}~dr=C\norm{f}_{C^{0,\alpha}_\L}t^\alpha.
\end{align*}
\end{proof}

\section{Estimates on the kernels}\label{Section:Preliminaries}

The nonnegative potential $V$ in \eqref{L} satisfies a reverse
H\"older inequality for some $q>n/2$:
\begin{equation}\label{RHs}
\left(\frac{1}{\abs{B}}\int_BV(y)^q~dy\right)^{1/q}\leq\frac{C}{\abs{B}}\int_BV(y)~dy,
\end{equation}
for all balls $B\subset\Real^n$. Associated to this potential, Z.
Shen defines  the critical radii function in \cite{Shen} as
\begin{equation}\label{critical}
\rho(x):=\sup\Big\{r>0:\frac{1}{r^{n-2}}\int_{B(x,r)}V(y)~dy\leq1\Big\},\qquad x\in\Real^n.
\end{equation}

\begin{lem}[See {\cite[Lemma~1.4]{Shen}}]\label{Lem:equiv rho}
There exist $c>0$ and $k_0\geq1$ such that for all $x,y\in\Real^n$
\begin{equation}\label{2}
c^{-1}\rho(x)\left(1+\frac{\abs{x-y}}{\rho(x)}\right)^{-k_0}\leq\rho(y)\leq c\rho(x)
\left(1+\frac{\abs{x-y}}{\rho(x)}\right)^{\frac{k_0}{k_0+1}}.
\end{equation}
\end{lem}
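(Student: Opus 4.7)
I would work with the auxiliary quantity $\Psi(x, r) := r^{2-n} \int_{B(x,r)} V(y) \, dy$. Local integrability of $V$ (a consequence of \eqref{RHs}) makes $r \mapsto \Psi(x, r)$ continuous with $\Psi(x, 0^+) = 0$, so the definition \eqref{critical} is equivalent to $\Psi(x, \rho(x)) = 1$. The central ingredient is the scaling bound
$$\Psi(x, r) \leq C (r/R)^{2 - n/q} \Psi(x, R), \qquad 0 < r \leq R,$$
obtained by applying H\"older's inequality to $\int_{B(x,r)} V$ and then \eqref{RHs} on $B(x, R)$. Since $q > n/2$, the exponent $\alpha := 2 - n/q$ is strictly positive; evaluated at $r = \rho(x)$ this gives $\Psi(x, R) \geq c_0 (R/\rho(x))^\alpha$ for all $R \geq \rho(x)$, which is the quantitative power-law growth that will drive the comparison.

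Combining this with the ball inclusions $B(x, r - |x-y|) \subset B(y, r) \subset B(x, r + |x-y|)$, I would first establish the comparability $\rho(x) \asymp \rho(y)$ whenever $|x - y| \lesssim \rho(x)$ directly from the scaling estimate. For general $|x - y|$, I would test the characterization $\Psi(y, \rho(y)) = 1$ against the inclusion-based lower bound
$$\Psi(y, R) \geq R^{2-n}(R - |x-y|)^{n-2}\,\Psi(x, R - |x-y|),$$
valid for $R > |x-y|$, after substituting the power-law lower bound for $\Psi(x, \cdot)$ in the right-hand side. Imposing $\Psi(y, R) \geq 1$ forces $\rho(y) \leq R$, and solving the resulting algebraic inequality for $R$ should yield an upper bound of the required form $\rho(y) \leq c \rho(x)(1 + |x-y|/\rho(x))^{k_0/(k_0+1)}$. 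The lower bound in \eqref{2} then follows by symmetry: swap the roles of $x$ and $y$ in the upper bound and rearrange algebraically.

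The delicate step will be pinning down the precise sublinear exponent $k_0/(k_0+1) < 1$. A crude one-shot application of the scaling estimate yields only a linear bound $\rho(y) \lesssim |x-y|$ in the regime $|x-y| \gg \rho(x)$; extracting the strict improvement to an exponent less than one requires exploiting the two factors $(R - |x-y|)/R$ and $(R - |x-y|)/\rho(x)$ in the lower bound for $\Psi(y, R)$ simultaneously, possibly supplemented by an iteration along a chain of intermediate points $x = z_0, \ldots, z_N = y$ with $|z_{i+1} - z_i| \lesssim \rho(z_i)$ so that at each link the already-established short-range comparability can be applied. Tracking how the RH exponent $\alpha$ propagates through this process should then identify $k_0$ explicitly in terms of $n$ and $q$.
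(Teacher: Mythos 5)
Your raw materials are the right ones (the function $\Psi(x,r)=r^{2-n}\int_{B(x,r)}V\,dy$, the H\"older-plus-reverse-H\"older scaling estimate, the ball inclusions, and the observation that the two halves of \eqref{2} are exchanged by swapping $x$ and $y$), but the plan runs the bootstrap in the wrong direction, and the step you yourself flag as ``delicate'' is exactly the one your tools cannot deliver. The inclusion bound $\Psi(y,R)\geq\bigl((R-|x-y|)/R\bigr)^{n-2}\Psi(x,R-|x-y|)$ is vacuous for $R\leq|x-y|$ and, combined with the power-law growth of $\Psi(x,\cdot)$, only forces $\Psi(y,R)\gtrsim 1$ once $R$ exceeds $|x-y|$ by roughly $\rho(x)$ (optimizing the two factors jointly only improves the correction term, not the leading one). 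So this route can never give more than the linear bound $\rho(y)\leq C(\rho(x)+|x-y|)$, i.e.\ exponent $1$. Swapping a linear bound yields nothing: $\rho(x)\leq C(\rho(y)+|x-y|)$ is an empty statement about $\rho(y)$ as soon as $|x-y|\gtrsim\rho(x)$; the swap-and-solve algebra produces the companion exponent $\theta/(1-\theta)$ only when $\theta<1$, which is precisely what you do not have. The chaining you suggest cannot rescue this: with steps of length $\lesssim\rho(z_i)$ the step size is locked to $\rho$ itself, so when $\rho$ grows along the chain the covered distance and the accumulated factors grow at the same geometric rate (again exponent $1$), and when $\rho$ decays along the chain the total length $\sum_i\rho(z_i)$ can stay bounded, so the chain need not reach $y$ at all.

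The missing ingredient is the doubling property of the measure $V(y)\,dy$, a standard consequence of \eqref{RHs} that you never invoke, and it is what makes the \emph{lower} bound in \eqref{2} provable first. Iterating $\int_{B(x,2r)}V\leq C_0\int_{B(x,r)}V$ gives the polynomial upper bound $\Psi(x,R)\leq C(R/\rho(x))^{N_0}$ for $R\geq\rho(x)$; then $B(y,\rho(x))\subset B(x,\rho(x)+|x-y|)$ yields $\Psi(y,\rho(x))\leq C(1+|x-y|/\rho(x))^{N_0'}$, and the almost-monotonicity from your scaling estimate, applied at $y$, converts this into $\rho(y)\geq c\,\rho(x)(1+|x-y|/\rho(x))^{-k_0}$ with $k_0$ determined by the doubling constant and $2-n/q$. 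Only then does the sublinear exponent appear, by exactly the algebra you describe but in the opposite order: swapping $x$ and $y$ in the lower bound gives, with $A=\rho(y)/\rho(x)$, the inequality $A\leq C(1+|x-y|/(A\rho(x)))^{k_0}$, and solving it in the regime $A\rho(x)\leq|x-y|$ gives $A^{k_0+1}\lesssim(|x-y|/\rho(x))^{k_0}$, i.e.\ the exponent $k_0/(k_0+1)$. Two smaller points: since $\Psi(y,\cdot)$ is not monotone, $\Psi(y,R)\geq1$ does not literally force $\rho(y)\leq R$ — you need the scaling estimate once more to upgrade this to $\rho(y)\leq CR$; and your short-range comparability step is fine, but it uses the inclusions in both directions (and care with the constant in $\Psi(x,s)\leq C$ for $s\leq\rho(x)$), not the scaling estimate alone.
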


Let $\set{\T_t}_{t>0}$ be the heat--diffusion semigroup associated to $\L$:
\begin{equation}\label{heatL}
\T_tf(x)\equiv e^{-t\L}f(x)=\int_{\Real^n}k_t(x,y)f(y)~dy,\qquad f\in L^2(\Real^n),~x\in\Real^n,~t>0.
\end{equation}

\begin{lem}[See \cite{Dziubanski-Zienkiewicz Hp,Kurata}]\label{Lem:cota heat L}
For every $N>0$ there exists a constant $C_N$ such that
\begin{equation}\label{1}
0\leq k_t(x,y)\leq C_Nt^{-n/2}e^{-\frac{\abs{x-y}^2}{5t}}\left(1+\frac{\sqrt{t}}{\rho(x)}
+\frac{\sqrt{t}}{\rho(y)}\right)^{-N},\quad x,y\in\Real^n,~t>0.
\end{equation}
\end{lem}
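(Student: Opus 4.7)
The plan is to prove the bound in two stages. The first establishes the plain Gaussian upper bound without the critical-radii decay; the second extracts the extra $(1+\sqrt{t}/\rho(x)+\sqrt{t}/\rho(y))^{-N}$ factor, which records the decay of the semigroup beyond the classical scale $\rho$.

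\textbf{Stage 1 (Gaussian upper bound).} Since $V\geq 0$, the Trotter/Feynman--Kac representation of $\T_t$ gives
\begin{equation*}
0\leq k_t(x,y)=p_t(x,y)\,\mathbb{E}^{t}_{x,y}\Bigl[\exp\Bigl(-\int_0^t V(b_s)\,ds\Bigr)\Bigr]\leq p_t(x,y),
\end{equation*}
where $p_t$ is the Euclidean heat kernel and the expectation is taken with respect to the Brownian bridge from $x$ to $y$ in time $t$. This already yields $k_t(x,y)\leq C_n t^{-n/2}e^{-|x-y|^2/(4t)}$; the constant $5$ in \eqref{1} is obtained by absorbing polynomial factors into a slightly larger exponent.

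\textbf{Stage 2 (critical-radii decay).} When $\sqrt{t}\leq c\min\{\rho(x),\rho(y)\}$ the factor $(1+\sqrt{t}/\rho(x)+\sqrt{t}/\rho(y))^{-N}$ is uniformly bounded below, so Stage 1 already gives \eqref{1}. The substantive case is $\sqrt{t}\gg\rho(x)$. Here I would use the defining property of the critical radii function: $\frac{1}{r^{n-2}}\int_{B(x,r)}V\geq 1$ whenever $r\geq\rho(x)$, together with the reverse H\"older hypothesis \eqref{RHs} with $q>n/2$. The latter places $V$ in an enhanced Kato class, which via a Khasminskii-type estimate upgrades the ball bound into an exponential moment bound for the path integral $\int_0^t V(b_s)\,ds$. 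Concretely, one shows
\begin{equation*}
\mathbb{E}^{t}_{x,y}\Bigl[\exp\Bigl(-\int_0^t V(b_s)\,ds\Bigr)\Bigr]\leq C_N\Bigl(1+\frac{\sqrt{t}}{\rho(x)}\Bigr)^{-N},
\end{equation*}
which combined with Stage 1 gives decay in $\sqrt{t}/\rho(x)$. An alternative, purely analytic route is to iterate the perturbation identity
\begin{equation*}
k_t(x,y)=p_t(x,y)-\int_0^t\int_{\Real^n}p_{t-s}(x,z)V(z)k_s(z,y)\,dz\,ds,
\end{equation*}
where each application of Shen's inequalities for $V\in RH_q$ picks up one extra power of $(1+\sqrt{t}/\rho(x))^{-1}$; induction on $N$ yields the claim. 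Symmetry in $x$ and $y$ follows from $k_t(x,y)=k_t(y,x)$, and the mixed decay in both $\rho(x)$ and $\rho(y)$ follows by applying the one-sided estimate to $k_{t/2}$ and using the semigroup identity $k_t(x,y)=\int k_{t/2}(x,z)k_{t/2}(z,y)\,dz$, together with Lemma \ref{Lem:equiv rho} to compare $\rho(z)$ with $\rho(x)$ and $\rho(y)$ in the integration region $|x-z|+|z-y|\lesssim\sqrt{t}$.

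\textbf{Main obstacle.} The crux of the argument is the quantitative killing estimate in Stage 2: turning the integral lower bound on $V$ over balls (supplied by the definition of $\rho$) into super-polynomial decay of $\mathbb{E}^t_{x,y}[e^{-\int V}]$. This is precisely where the reverse H\"older condition $q>n/2$ is essential, since weaker integrability of $V$ would not control the high-frequency oscillations of Brownian paths. Once this input is in place, the rest of the argument reduces to bookkeeping with Gaussian kernels and the doubling-type estimate \eqref{2}.
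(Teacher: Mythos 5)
The paper does not prove this lemma: it is imported verbatim from \cite{Kurata} and \cite{Dziubanski-Zienkiewicz Hp}, so your sketch can only be measured against those arguments. Your Stage 1 is fine and is indeed how every proof starts: since $V\geq 0$, Feynman--Kac gives $0\leq k_t(x,y)\leq (4\pi t)^{-n/2}e^{-\abs{x-y}^2/(4t)}$, and the weaker constant $5$ in the exponent leaves room to absorb the polynomial correction factors later. The genuine gap is in Stage 2. The entire content of the lemma is the killing estimate
\begin{equation*}
\mathbb{E}^{t}_{x,y}\Bigl[\exp\Bigl(-\int_0^t V(b_s)\,ds\Bigr)\Bigr]\leq C_N\Bigl(1+\tfrac{\sqrt{t}}{\rho(x)}\Bigr)^{-N}
\quad\text{(equivalently, decay of }\int_{\Real^n}k_t(x,y)\,dy\text{)},
\end{equation*}
and you assert it rather than prove it. Moreover, the tools you name do not produce it: Khasminskii's lemma and the Kato-class machinery control exponential moments of $+\int_0^t V(b_s)\,ds$, i.e.\ they yield \emph{boundedness} for attractive potentials, whereas here $V\geq 0$ makes $e^{-\int V}\leq 1$ trivial and what is needed is \emph{smallness}: a quantitative lower bound on $\int_0^t V(b_s)\,ds$ holding with overwhelming probability for the bridge. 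In the cited proofs this is exactly where the Fefferman--Phong inequality for $V\in RH_q$, $q\geq n/2$, enters: it converts the normalization $r^{2-n}\int_{B(x,r)}V\simeq 1$ at $r=\rho(x)$ (see \eqref{critical}) into a subsolution-type estimate giving one fixed power of decay, $\int_{\Real^n}k_t(x,y)\,dy\leq C\bigl(1+\sqrt{t}/\rho(x)\bigr)^{-\delta_0}$ for some $\delta_0>0$ depending on $V$.

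Your two suggested routes to arbitrary $N$ also need repair. The Duhamel identity, as written, only re-proves $k_t\leq p_t$ (the perturbation term is subtracted and nonnegative), so the claim that ``each application of Shen's inequalities picks up one power of $(1+\sqrt{t}/\rho(x))^{-1}$'' is the missing estimate restated, not a proof of it; what actually upgrades $\delta_0$ to arbitrary $N$ in \cite{Dziubanski-Zienkiewicz Hp} is splitting $[0,t]$ into many equal subintervals, applying the one-step decay on each piece via the semigroup property, and using the slow variation of $\rho$ from Lemma \ref{Lem:equiv rho} to keep $\rho$ comparable along the way. Your final symmetrization step (composing two copies of $k_{t/2}$ and comparing $\rho(z)$ with $\rho(x)$, $\rho(y)$ through \eqref{2}) is correct bookkeeping, but it rests on the unproven core, so as it stands the proposal does not establish \eqref{1}.
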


The kernel of the classical heat semigroup $\set{T_t}_{t>0}=\{e^{t\Delta}\}_{t>0}$ on $\Real^n$ is
\begin{equation}\label{classical heat}
h_t(x):=\frac{1}{(4\pi t)^{n/2}}~e^{-\frac{\abs{x}^2}{4t}},\qquad x\in\Real^n,~t>0,
\end{equation}

\begin{lem}[See {\cite[Proposition~2.16]{Dziubanski-Zienkiewicz Hp}}]\label{Lem:Schwartz}
There exists a nonnegative function $\omega\in\mathcal{S}$, where  $\mathcal{S}$ denotes the Schwartz's class of rapidly decreasing $C^\infty$ functions in $\Real^n$,  such that
\begin{equation}\label{3}
\abs{k_t(x,y)-h_t(x-y)}\leq\left(\frac{\sqrt{t}}{\rho(x)}\right)^\delta\omega_t(x-y),\quad x,y\in\Real^n,~t>0,
\end{equation}
where $\omega_t(x-y):=t^{-n/2}\omega\left((x-y)/\sqrt{t}\right)$ and
$\delta:=2-\frac{n}{q}>0$.
\end{lem}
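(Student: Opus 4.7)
The plan is to derive the bound from the perturbation (Duhamel) formula relating the semigroup $\T_t=e^{-t\L}$ to the free heat semigroup $e^{t\Delta}$. Since $(\partial_t-\Delta)(\T_tf-e^{t\Delta}f)=-V\T_tf$ with zero initial data, Duhamel's principle at the kernel level gives
$$h_t(x-y)-k_t(x,y)=\int_0^t\!\!\int_{\Real^n} h_{t-s}(x-z)\,V(z)\,k_s(z,y)\,dz\,ds,$$
and the entire task is to bound this double integral by $(\sqrt{t}/\rho(x))^{\delta}\omega_t(x-y)$ for some fixed nonnegative $\omega\in\S$, with $\delta=2-n/q$.

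My first step would be to majorize $k_s(z,y)$ by the Gaussian upper bound of Lemma \ref{Lem:cota heat L} (discarding the helpful $(1+\sqrt{s}/\rho)^{-N}$ factor, which only strengthens the estimate), so the integrand is controlled by $h_{t-s}(x-z)\tilde h_s(z-y)V(z)$ for a Gaussian $\tilde h$. Standard Gaussian algebra then factors this product as a prefactor times $h_{ct}(x-y)$ times a Gaussian in $z$ centered on the segment $[x,y]$ at scale $\sqrt{s(t-s)/t}\leq\sqrt{t}$. The inner $z$-integration reduces to $h_{ct}(x-y)$ multiplied by a Gaussian-weighted average of $V$ at some scale $r\in(0,\sqrt{t}]$, and the key input becomes the following consequence of \eqref{RHs}, the definition \eqref{critical} of $\rho$, and Lemma \ref{Lem:equiv rho} (essentially proved in \cite{Shen}): for every ball $B(x,r)$,
$$\frac{1}{r^{n-2}}\int_{B(x,r)}V(z)\,dz\leq C\left(\frac{r}{\rho(x)}\right)^{\delta},\qquad 0<r\leq\rho(x),$$
with a compatible polynomial control for $r>\rho(x)$ that the Gaussian tail absorbs. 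Smearing this dyadically against the $z$-Gaussian and integrating in $s\in(0,t)$ produces exactly the prefactor $(\sqrt{t}/\rho(x))^\delta$ together with a Gaussian profile in $(x-y)/\sqrt{t}$.

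The main obstacle, in my view, is not the algebra of Duhamel's formula but upgrading the bound one obtains into a fixed nonnegative \emph{Schwartz} majorant $\omega$: what one naturally produces is a superposition of Gaussian-type profiles at different time scales $s$, and one has to verify that their combined envelope, evaluated at $(x-y)/\sqrt{t}$, is both nonnegative and rapidly decaying together with all its derivatives, uniformly in $t$ and $x$. The rescaling $\omega_t(x-y)=t^{-n/2}\omega((x-y)/\sqrt{t})$ is dictated by parabolic scaling, so the nontrivial content is showing that the scale-$s$ contributions, once summed, still admit a common Schwartz envelope at scale $\sqrt{t}$. This smoothness-and-decay bookkeeping is exactly the calculation carried out in \cite{Dziubanski-Zienkiewicz Hp}.
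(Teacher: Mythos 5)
This lemma is not proved in the paper at all: it is imported verbatim from \cite[Proposition~2.16]{Dziubanski-Zienkiewicz Hp}, so there is no internal argument to compare against. Your sketch is the standard route, and it is essentially the one used in that reference: the Duhamel identity
\begin{equation*}
h_t(x-y)-k_t(x,y)=\int_0^t\int_{\Real^n}h_{t-s}(x-z)\,V(z)\,k_s(z,y)~dz~ds,
\end{equation*}
(correct, since $e^{-t\L}$ solves $\partial_t w=\Delta w-Vw$ and $V\geq0$), Gaussian upper bounds for $k_s$ from Lemma \ref{Lem:cota heat L}, and the Fefferman--Phong type consequence of \eqref{RHs}--\eqref{critical} from \cite{Shen}, namely $r^{2-n}\int_{B(x,r)}V\leq C(r/\rho(x))^{\delta}$ for $r\leq\rho(x)$, with Lemma \ref{Lem:equiv rho} used to pass from $\rho$ at points near the segment $[x,y]$ back to $\rho(x)$ (the polynomial loss being absorbed by the Gaussian in $(x-y)/\sqrt t$). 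The quantitative bookkeeping does close: at scale $r=\sqrt{s(t-s)/t}$ the inner integral contributes $r^{\delta-2}\rho(x)^{-\delta}$ up to a Gaussian profile, and $\int_0^t\big(s(t-s)/t\big)^{-n/(2q)}ds\simeq t^{\delta/2}$ precisely because $q>n/2$, which produces the prefactor $(\sqrt t/\rho(x))^{\delta}$.

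Two remarks. First, what you single out as the main obstacle is not one: the lemma only asks for \emph{some} nonnegative Schwartz majorant $\omega$, not decay of derivatives of the envelope you construct. The Gaussian algebra yields a pointwise bound $C(\sqrt t/\rho(x))^{\delta}t^{-n/2}e^{-c\abs{x-y}^2/t}$, and one simply takes $\omega(u)=Ce^{-c\abs{u}^2}$. Also, when $\sqrt t\geq\rho(x)$ the prefactor in \eqref{3} is at least $1$ and the estimate follows immediately from Lemma \ref{Lem:cota heat L} together with \eqref{classical heat}, so only the regime $\sqrt t\leq\rho(x)$ (where your averaged bound on $V$ applies directly) needs the Duhamel argument. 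Second, the decisive computation — the dyadic smearing in $z$ and the integration in $s$ — is asserted rather than carried out, and you ultimately defer it to the reference; as written this is a correct outline of the known proof rather than a self-contained one, which is an acceptable match here given that the paper itself only cites the result.
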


We define the following kernel that will be useful in the sequel. Let
\begin{equation}\label{Qt}
Q_t(x,y):=t^2\left.\frac{\partial k_s(x,y)}{\partial s}\right|_{s=t^2},\quad x,y\in\Real^n,~t>0.
\end{equation}

\begin{lem}[See {\cite[Proposition~4]{DGMTZ}}]\label{Lem:Q est}
Let $\delta$ be as in Lemma \ref{Lem:Schwartz}. There exists a constant $c$ such that for every $N$ there is a constant $C_N$ such that
\begin{enumerate}[(a)]
    \item $\displaystyle \abs{Q_t(x,y)}\leq C_Nt^{-n}e^{-c\frac{\abs{x-y}^2}{t^2}}\left(1+\frac{t}{\rho(x)}+\frac{t}{\rho(y)}\right)^{-N}$;
    \item $\displaystyle \abs{Q_t(x+h,y)-Q_t(x,y)}\leq C_N\left(\frac{\abs{h}}{t}\right)^\delta
    t^{-n}e^{-c\frac{\abs{x-y}^2}{t^2}}\left(1+\frac{t}{\rho(x)}+\frac{t}{\rho(y)}\right)^{-N}$, for all $\abs{h}\leq t$;
    \item $\displaystyle\abs{\int_{\Real^n}Q_t(x,y)~dy}\leq C_N\frac{(t/\rho(x))^\delta}{(1+t/\rho(x))^N}$.
\end{enumerate}
\end{lem}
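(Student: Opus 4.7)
The plan is to derive all three estimates as consequences of analyticity of the heat--diffusion semigroup $\{\T_s\}_{s>0}$ combined with the bounds already recorded in Lemmas \ref{Lem:cota heat L} and \ref{Lem:Schwartz}. Since $\L$ is nonnegative and self-adjoint on $L^2(\Real^n)$, $e^{-s\L}$ extends holomorphically to the right half-plane, and the Gaussian bound of Lemma \ref{Lem:cota heat L} propagates (with modified constants) to complex times $\xi$ with $\operatorname{Re}\xi\gtrsim|\xi|$ by the standard Stein--Davies perturbation argument. With this in hand, Cauchy's integral formula on the circle of radius $t^2/2$ centred at $s=t^2$,
\[
\partial_s k_s(x,y)\big|_{s=t^2}=\frac{1}{2\pi i}\oint_{|\xi-t^2|=t^2/2}\frac{k_\xi(x,y)}{(\xi-t^2)^2}\,d\xi,
\]
and the trivial estimate of the integrand by the sup of $|k_\xi|$ along the contour yield $|\partial_s k_s(x,y)|_{s=t^2}|\le Ct^{-2-n}e^{-c|x-y|^2/t^2}(1+t/\rho(x)+t/\rho(y))^{-N}$. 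Multiplying by $t^2$ gives part (a).

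For part (b), I would combine the same Cauchy representation with a H\"older estimate in the first spatial variable. Arguments in the style of Kurata and Dziuba\'nski--Zienkiewicz show that
\[
|k_\xi(x+h,y)-k_\xi(x,y)|\le C(|h|/\sqrt{|\xi|})^{\delta}|\xi|^{-n/2}e^{-c|x-y|^2/|\xi|}\bigl(1+\sqrt{|\xi|}/\rho(x)+\sqrt{|\xi|}/\rho(y)\bigr)^{-N}
\]
for $|h|\le\sqrt{|\xi|}$, the exponent $\delta=2-n/q$ arising precisely from the reverse H\"older condition \eqref{RHs} on $V$. Applying this bound on the contour $|\xi-t^2|=t^2/2$, where $|\xi|\asymp t^2$ and $|h|\le t$ forces the smallness condition, and then using the Cauchy formula exactly as in part (a) produces the $(|h|/t)^\delta$ factor while preserving the Gaussian and the $\rho$-weights. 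The main technical obstacle is the complex-time H\"older estimate on $k_\xi$, but this is standard and follows from interior elliptic regularity for $-\Delta+V$ applied to the Cauchy--Riemann equations of the holomorphic family $\xi\mapsto k_\xi$, using $V\in L^q_{\mathrm{loc}}$ with $q>n/2$.

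Part (c) is handled by exploiting the cancellation $\int h_s(x-y)\,dy=1$. Writing
\[
\int_{\Real^n}Q_t(x,y)\,dy=t^2\partial_s\!\left[\int_{\Real^n}\bigl(k_s(x,y)-h_s(x-y)\bigr)\,dy\right]_{s=t^2}
\]
and applying Cauchy's formula on $|\xi-t^2|=t^2/2$ together with Lemma \ref{Lem:Schwartz} bounds the $\xi$-integrand by $C(\sqrt{|\xi|}/\rho(x))^\delta\|\omega\|_{L^1}\asymp C(t/\rho(x))^\delta$. This gives $|\int Q_t(x,y)\,dy|\le C(t/\rho(x))^\delta$, which is sharp in the regime $t\lesssim\rho(x)$. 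In the complementary regime $t\gtrsim\rho(x)$, I would integrate the pointwise bound of part (a) over $y$ with $N$ replaced by $N+\delta$, obtaining $|\int Q_t(x,y)\,dy|\le C_N(1+t/\rho(x))^{-N-\delta}$. Combining the two estimates and using $(1+t/\rho(x))\asymp\max\{1,t/\rho(x)\}$ produces the unified bound $C_N(t/\rho(x))^\delta/(1+t/\rho(x))^N$, completing (c).
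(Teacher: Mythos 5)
First, a point of comparison: the paper does not prove Lemma \ref{Lem:Q est} at all --- it is quoted from \cite[Proposition~4]{DGMTZ} and used as a black box --- so your sketch is not an alternative to an internal argument but an attempt to reconstruct the cited proof. As a reconstruction it identifies the right general mechanism (holomorphy of $e^{-\xi\L}$ plus Cauchy's formula on $|\xi-t^2|=t^2/2$), but each part rests on a complex-time input that is asserted rather than proved, and these inputs are exactly where the content of the result lies. For (a) you need, on the contour, the Gaussian bound \emph{with} the weight $\bigl(1+\sqrt{|\xi|}/\rho(x)+\sqrt{|\xi|}/\rho(y)\bigr)^{-N}$; the standard Davies-type perturbation argument yields the unweighted complex-time Gaussian bound, and upgrading it to the $\rho$-weighted one requires a separate Phragm\'en--Lindel\"of interpolation against Lemma \ref{Lem:cota heat L}, which you do not carry out. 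For (c) the gap is sharper: Lemma \ref{Lem:Schwartz} is a real-time estimate, so it cannot simply be ``applied on the contour'' to bound the $\xi$-integrand; you would need its (unproved) complex-time analogue. A purely real-variable route avoids this: since $e^{u\Delta}1=1$, Duhamel's formula gives $\int_{\Real^n}Q_t(x,y)\,dy=-t^2\int_{\Real^n}k_{t^2}(x,z)V(z)\,dz$, which is then estimated by Lemma \ref{Lem:cota heat L} together with Shen's bounds on $r^{2-n}\int_{B(x,r)}V$ in terms of $(r/\rho(x))^{2-n/q}$; this is in the spirit of \cite{DGMTZ} and is both shorter and rigorous.

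Part (b) is the crux, and your justification of the complex-time H\"older estimate is not a proof. The holomorphic family satisfies $-\Delta_xk_\xi+Vk_\xi=-\partial_\xi k_\xi$ (there is no Cauchy--Riemann system in the $x$ variable to exploit), and to extract the stated exponent together with the Gaussian factor and the $\rho$-weights one must run the perturbation formula against the free heat kernel on balls of radius $\sim\sqrt{|\xi|}$, quantifying $V$ through \eqref{RHs} and \eqref{critical} --- i.e.\ precisely the Kurata/Dziuba\'nski--Zienkiewicz estimates you invoke, which are of the same depth as the lemma being proved. There is also an exponent problem: the mechanism $W^{2,q}\hookrightarrow C^{0,2-n/q}$ produces $\delta=2-n/q$ only when $n/2<q<n$; when $q>n$ (the case assumed in the paper's main theorems) one has $\delta>1$, and then no nonconstant kernel can satisfy the difference bound in (b) with exponent $\delta$: telescoping over a fine subdivision of the segment from $x$ to $x+h$ would force $Q_t(\cdot,y)$ to be constant, hence zero by (a). So (b) can only be proved (and can only be true) with the exponent capped at $\min\{1,\delta\}$, which suffices for every later use of the smoothness estimates in the paper, since those only require some $\delta'\in(\alpha,1)$. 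In summary: the outline follows the standard pattern, but the weighted complex-time Gaussian bound, the complex-time version of Lemma \ref{Lem:Schwartz}, and the complex-time H\"older estimate (with the corrected exponent) are genuine gaps, not routine details.
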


\begin{rem}\label{Rem:delta prime}
Let $0<\delta'\leq\delta$. Then we can easily deduce from Lemma \ref{Lem:Q
est}\textit{(c)} that for any $N>0$ there exists a constant $C_N$
such that $\displaystyle\abs{\int_{\Real^n}Q_t(x,y)~dy}\leq C_N\frac{(t/\rho(x))^{\delta'}}{(1+t/\rho(x))^N}$.
\end{rem}

Using the heat semigroup \eqref{heatL} and through Bochner's subordination formula, see \cite{SteinTopics}, we have:
\begin{equation}\label{subordinacion}
\P_tf(x)\equiv
e^{-t\sqrt{\L}}f(x)=\frac{1}{\sqrt{\pi}}\int_0^\infty\frac{e^{-u}}{\sqrt{u}}~\T_{t^2/(4u)}f(x)~du=\frac{t}{2\sqrt\pi}\int_0^\infty
\frac{~e^{-{t^2}/{(4u)}}}{u^{3/2}}\mathcal{T}_{u}f(x)~du,
\end{equation}
for any $ x\in\Real^n,~t>0.$ It follows that the $\L$--Poisson kernel is given by
\begin{equation}\label{Poisson kernel}
\P_t(x,y)=\frac{1}{\sqrt{\pi}}\int_0^\infty\frac{e^{-u}}{\sqrt{u}}~k_{t^2/(4u)}(x,y)~du=\frac{t}{2\sqrt\pi}\int_0^\infty
\frac{~e^{-{t^2}/{(4u)}}}{u^{3/2}}~k_u(x,y)~du.
\end{equation}
We will denote {the classical Poisson semigroup in $\Real^{n+1}_+$
by $P_tf(x)=P_t\ast f(x)$}, , where
\begin{equation}\label{classical Poisson kernel}
P_t(x)=c_n\frac{t}{(t^2+\abs{x}^2)^{\frac{n+1}{2}}}.
\end{equation}

Let us now compute the fractional derivatives \eqref{frac deriv} of
the Poisson kernel. The formula will involve the kernel $Q_t(x,y)$
of \eqref{Qt} and the Hermite polynomials $H_m(r)$ defined, for
$m\in\mathbb{N}_0$ and $r\in\Real$, as $H_m(r)=(-1)^me^{r^2}\tfrac{d^m}{dr^m}(e^{-r^2})$. From the first identity in \eqref{Poisson kernel} and the definition of $Q_t$ in \eqref{Qt}, we have
$$\partial_t\P_t(x,y)=\frac{2}{t\sqrt{\pi}}\int_0^\infty\frac{e^{-u}}{\sqrt{u}}~Q_{t/(2\sqrt{u})}(x,y)~du
= \frac{2}{\sqrt{\pi}}\int_0^\infty
e^{-{t^2}/{(4v^2)}}Q_v(x,y)~\frac{dv}{v^2}.$$
Hence, for each $m\geq1$, we obtain
$$\partial_t^m\P_t(x,y)=\frac{2(-1)^m}{\sqrt{\pi}}\int_0^\infty H_{m-1}\left(\frac{t}{2v}\right)e^{-\frac{t^2}{4v^2}}\frac{1}{(2v)^{m-1}}~Q_v(x,y)~\frac{dv}{v^2}.$$
With this we can write the derivatives $\partial_t^\beta\P_t(x,y)$, $\beta>0$, as follows. For $m=[\beta]+1$,
\begin{align}
    \nonumber\partial_t^\beta\P_t(x,y) &= \frac{e^{-i \pi(m-\beta)}}{\Gamma(m-\beta)}\int_0^\infty\partial_t^m\P_{t+s}(x,y)s^{m-\beta}~\frac{ds}{s} \\
    \label{deriv Poisson}&= \frac{2(-1)^{m}e^{-i \pi(m-\beta)}}{\Gamma(m-\beta)\sqrt{\pi}}\int_0^\infty\int_0^\infty
     H_{m-1}\left(\frac{t+s}{2v}\right)e^{-\frac{(t+s)^2}{4v^2}}\frac{1}{(2v)^{m-1}}~Q_v(x,y)~\frac{dv}{v^2}s^{m-\beta}~\frac{ds}{s} \\
      \nonumber&= \frac{2(-1)^{m}e^{-i \pi(m-\beta)}}{\Gamma(m-\beta)\sqrt{\pi}}\int_0^\infty
     \left[\int_0^\infty H_{m-1}\left(\frac{t+s}{2v}\right)e^{-\frac{(t+s)^2}{4v^2}}s^{m-\beta}
     ~\frac{ds}{s}\right]\frac{1}{(2v)^{m-1}}~Q_v(x,y)~\frac{dv}{v^2}.
\end{align}

\begin{prop}\label{Prop:Poisson est}
Let $\beta>0$. For any $0<\delta'\leq\delta$ with $0<\delta'<\beta$,
and $N>0$ there exists a constant $C=C_{N,\beta,\delta'}$ such that
\begin{enumerate}[(a)]
    \item $\displaystyle\abs{\P_t(x,y)}
            \leq C \frac{t}{(\abs{x-y}^2+t^2)^{\frac{n+1}{2}}}
             \left(1+\frac{(\abs{x-y}^2+t^2)^{1/2}}{\rho(x)}
             + \frac{(\abs{x-y}^2+t^2)^{1/2}}{\rho(y)}\right)^{-N}$;
    \item $\displaystyle|t^\beta\partial_t^\beta\P_t(x,y)|\leq C \frac{t^\beta}{(\abs{x-y}^2+t^2)^{\frac{n+\beta}{2}}}
          \left(1+\frac{(\abs{x-y}^2+t^2)^{1/2}}{\rho(x)}
          + \frac{(\abs{x-y}^2+t^2)^{1/2}}{\rho(y)}\right)^{-N}$;
    \item  For all $\abs{h}\leq t$,
        \begin{multline*}
            \quad|t^\beta\partial_t^\beta\P_t(x+h,y)-t^\beta\partial_t^\beta\P(x,y)| \\
             \leq C\left(\frac{\abs{h}}{t}\right)^{\delta'}\frac{t^\beta}{(\abs{x-y}^2+t^2)^{\frac{n+\beta}{2}}}
              \left(1+\frac{(\abs{x-y}^2+t^2)^{1/2}}{\rho(x)}+ \frac{(\abs{x-y}^2+t^2)^{1/2}}{\rho(y)}\right)^{-N};
        \end{multline*}
    \item $\displaystyle\abs{\int_{\Real^n}t^\beta\partial_t^\beta\P_t(x,y)~dy}
           \leq C\frac{(t/\rho(x))^{\delta'}}{(1+t/\rho(x))^N}.$
\end{enumerate}
\end{prop}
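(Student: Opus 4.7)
The plan is to reduce all four estimates to computations using the subordination formula \eqref{Poisson kernel} for part (a) and the integral representation \eqref{deriv Poisson} of $\partial_t^\beta\P_t(x,y)$ for parts (b)--(d); insert the pointwise bounds on $k_u$ and $Q_v$ from Lemmas \ref{Lem:cota heat L} and \ref{Lem:Q est}; and carry out a careful two-regime splitting of the resulting single-variable integral at the natural scale $R:=(|x-y|^2+t^2)^{1/2}$.

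For part (a), inserting Lemma \ref{Lem:cota heat L} into \eqref{Poisson kernel} and combining the Gaussians $e^{-t^2/(4u)}e^{-|x-y|^2/(5u)}$ into $e^{-cR^2/u}$ reduces the claim, after the substitution $u=v^2$, to bounding $\int_0^\infty v^{-(n+2)} e^{-cR^2/v^2}(1+v/\rho(x)+v/\rho(y))^{-N}\,dv$ and splitting at $v=R$: on $v\geq R$ the $\rho$-weight is monotonically dominated by its value at $v=R$, while on $v\leq R$ the Gaussian absorbs the polynomial factor $(R/v)^N$ needed to convert $(1+v/\rho)^{-N}$ into $(1+R/\rho)^{-N}$. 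Part (b) begins with the key inner computation in \eqref{deriv Poisson}: using $|H_{m-1}(z)e^{-z^2}|\leq C e^{-z^2/2}$ and $m-\beta\in(0,1]$, the bracketed $s$-integral is bounded by $Cv^{m-\beta}e^{-t^2/(Cv^2)}$. After plugging in Lemma \ref{Lem:Q est}(a) and cancelling powers of $v$, the task reduces to
\[
\int_0^\infty v^{-(n+\beta+1)} e^{-cR^2/v^2}\Bigl(1+\tfrac{v}{\rho(x)}+\tfrac{v}{\rho(y)}\Bigr)^{-N}dv \leq C\,R^{-(n+\beta)}\Bigl(1+\tfrac{R}{\rho(x)}+\tfrac{R}{\rho(y)}\Bigr)^{-N},
\]
which is handled by the very same splitting at $v=R$, and multiplication by $t^\beta$ concludes.

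For part (c), $Q_v$ is replaced by $Q_v(x+h,y)-Q_v(x,y)$: on $v\geq|h|$ Lemma \ref{Lem:Q est}(b) gains the factor $(|h|/v)^\delta$; on $v<|h|$ we use the triangle inequality with Lemma \ref{Lem:Q est}(a) at $x+h$ and $x$ separately, inserting $1=(|h|/v)^{\delta'}(v/|h|)^{\delta'}$ and letting the harmless factor $(v/|h|)^{\delta'}$ be absorbed by the Gaussian. Since $|h|\leq t\leq R$, the exchange $(|h|/v)^\delta\leadsto(|h|/t)^{\delta'}$ is possible only after accepting $\delta'<\delta$, and I expect the bookkeeping in the small-$v$ regime together with the comparison of $\rho(x+h)$ and $\rho(x)$ via Lemma \ref{Lem:equiv rho} to be the main obstacle. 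For part (d), pushing $\int_{\Real^n}\!\cdot\,dy$ inside by Fubini, the $y$-integral falls only on $Q_v(x,y)$; Remark \ref{Rem:delta prime} provides $|\int_{\Real^n}Q_v(x,y)\,dy|\leq C(v/\rho(x))^{\delta'}(1+v/\rho(x))^{-N}$. Combined with the bracket bound $Cv^{m-\beta}e^{-t^2/(Cv^2)}$, a single integral in $v$ remains which, split at $v=t$, produces $t^{-\beta}(t/\rho(x))^{\delta'}(1+t/\rho(x))^{-N}$; multiplication by $t^\beta$ yields the claim.
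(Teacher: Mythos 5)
Your proposal follows essentially the same route as the paper's proof: the subordination formula \eqref{Poisson kernel} with Lemma \ref{Lem:cota heat L} for (a), the representation \eqref{deriv Poisson} together with the Hermite--Gaussian bracket estimate \eqref{est for beta deriv} and Lemma \ref{Lem:Q est} for (b)--(d), and a splitting of the resulting one-variable integral at the scale $(\abs{x-y}^2+t^2)^{1/2}$ (respectively at the scale governed by $t/\rho(x)$ in (d)). The only notable difference is that you make explicit the regime $v<\abs{h}$ in (c), where Lemma \ref{Lem:Q est}\textit{(b)} is not applicable and one falls back on part \textit{(a)} plus the Gaussian to absorb the lost factor; the paper compresses this into ``follows parallel lines,'' so your sketch is, if anything, slightly more complete on that point.
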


\begin{proof}
Let us prove \textit{(a)} first. Observe that, by the second
identity of \eqref{Poisson kernel} and Lemma \ref{Lem:cota heat L}, we obtain
\begin{align*}
    \abs{\P_t(x,y)} &\leq Ct\int_0^{\abs{x-y}^2+t^2} {u^{-\frac{n+3}{2}}}{~e^{-\frac{\abs{x-y}^2+t^2}{cu}}}
     \left(1+\frac{\sqrt{u}}{\rho(x)}+\frac{\sqrt{u}}{\rho(y)}\right)^{-N}du \\
     &\quad +Ct\int_{\abs{x-y}^2+t^2}^\infty {u^{-\frac{n+3}{2}}}{~e^{-\frac{\abs{x-y}^2+t^2}{cu}}}
     \left(1+\frac{\sqrt{u}}{\rho(x)}+\frac{\sqrt{u}}{\rho(y)}\right)^{-N}du=: I+II.
\end{align*}
For $I$ apply the change of variables $r=(\abs{x-y}^2+t^2)/u$ to get
$$I\leq\frac{Ct}{(\abs{x-y}^2+t^2)^{\frac{n+1}{2}}}\left(1+\frac{(\abs{x-y}^2+t^2)^{1/2}}{\rho(x)}
+\frac{(\abs{x-y}^2+t^2)^{1/2}}{\rho(y)}\right)^{-N} \int_1^\infty
r^{\frac{n+N-1}{2}}e^{-cr}~{dr}.$$ For $II$,
$$II\leq Ct\left(1+\frac{(\abs{x-y}^2+t^2)^{1/2}}{\rho(x)}+\frac{(\abs{x-y}^2+t^2)^{1/2}}{\rho(y)}\right)^{-N}\int_{\abs{x-y}^2+t^2}^\infty {u^{-\frac{n+3}{2}}}~du.$$
Combining these last two estimates we conclude the proof of
\textit{(a)}.

To prove \textit{(b)}, note that we can estimate the integral in
brackets in \eqref{deriv Poisson} as follows:
\begin{align}
    \nonumber \abs{\int_0^\infty
    H_{m-1}\left(\frac{t+s}{2v}\right)e^{-\frac{(t+s)^2}{4v^2}}s^{m-\beta}~\frac{ds}{s}}
     &\leq C_m\int_0^\infty e^{-c\frac{(t+s)^2}{4v^2}}s^{m-\beta}~\frac{ds}{s}
     \leq C_me^{-c\frac{t^2}{v^2}}\int_0^\infty
     e^{-c\frac{s^2}{v^2}}s^{m-\beta}~\frac{ds}{s}\\
     \label{est for beta deriv}&= C_me^{-c\frac{t^2}{v^2}}v^{m-\beta}\int_0^\infty e^{-cr^2}r^{m-\beta}~\frac{dr}{r}
     = C_{m,\beta}~e^{-c\frac{t^2}{v^2}}v^{m-\beta}.
\end{align}
Using identity \eqref{deriv Poisson}, this last inequality and Lemma \ref{Lem:Q est}\textit{(a)}, we get
$$|\partial_t^\beta\P_t(x,y)| \leq C\int_0^\infty e^{-c\frac{t^2}{v^2}} v^{-\beta}\abs{Q_v(x,y)}~\frac{dv}{v}\leq C\int_0^\infty \frac{e^{-c\frac{\abs{x-y}^2+t^2}{v^2}}}{v^{n+\beta}}\left(1+\frac{v}{\rho(x)}+\frac{v}{\rho(y)}\right)^{-N}\frac{dv}{v}.$$
The last integral can be split and treated as $I$ and $II$ above. Hence \textit{(b)} is proved.

The proof of part \textit{(c)} follows parallel lines as we have just done for \textit{(b)} by using identity
\eqref{deriv Poisson}, estimate \eqref{est for beta deriv} and Lemma \ref{Lem:Q est}\textit{(b)}.

For \textit{(d)}, let $0<\delta'\leq\delta$ with $0<\delta'<\beta$.
By Remark \ref{Rem:delta prime} and the change of variables $w=t/v$,
\begin{multline*}
    \abs{\int_{\Real^n}t^\beta\partial_t^\beta\P_t(x,y)~dy} \leq Ct^\beta\int_0^\infty e^{-c\frac{t^2}{v^2}}v^{-\beta}
    \abs{\int_{\Real^n}Q_v(x,y)~dy}~\frac{dv}{v} \\
     \leq Ct^\beta\int_0^\infty e^{-c\frac{t^2}{v^2}}v^{-\beta}\frac{(v/\rho(x))^{\delta'}}{(1+v/\rho(x))^N}~\frac{dv}{v}= C(t/\rho(x))^{\delta'}\int_0^\infty e^{-cw^2}\frac{w^{\beta-\delta'}}{(1+t/(w\rho(x)))^N}~\frac{dw}{w}.
\end{multline*}
On one hand,
\begin{align*}
    \int_{t/\rho(x)}^\infty e^{-cw^2}\frac{w^{\beta-\delta'}}{(1+t/(w\rho(x)))^N}~\frac{dw}{w}
     &\leq e^{-c\frac{t^2}{2\rho(x)^2}}\int_0^\infty e^{-c\frac{w^2}{2}}w^{\beta-\delta'}~\frac{dw}{w}
     &\leq Ce^{-c\frac{t^2}{\rho(x)^2}}\leq \frac{C}{(1+t/\rho(x))^N}.
\end{align*}
On the other hand, we consider two cases. If $t/\rho(x)\leq1$ then
$$\int_0^{t/\rho(x)} e^{-cw^2}\frac{w^{\beta-\delta'}}{(1+t/(w\rho(x)))^N}~\frac{dw}{w}\leq \int_0^1w^{
\beta-\delta'}~\frac{dw}{w}\leq\frac{C}{(1+t/\rho(x))^N}.$$
If $t/\rho(x)>1$ then
\begin{align*}
    \int_0^{t/\rho(x)} e^{-cw^2}\frac{w^{\beta-\delta'}}{(1+t/(w\rho(x)))^N}~\frac{dw}{w}
    \leq \frac{1}{(t/\rho(x))^N}\int_0^\infty e^{-cw^2}w^{\beta-\delta'+N}~\frac{dw}{w}
    \leq \frac{C}{(1+t/\rho(x))^N}.
\end{align*}
This concludes the proof of the proposition.
\end{proof}

To finish this section we show a reproducing formula for the
operator $t^\beta\partial_t^\beta\P_t$ on $L^2(\Real^n)$.

\begin{lem}\label{Lem:equality L2}
The operator $t^\beta\partial_t^\beta\P_t$ defines an isometry from
$L^2(\Real^n)$ into $L^2(\Real^{n+1}_+,\frac{dx~dt}{t})$. Moreover,
\begin{equation}\label{equ4}
f(x)=\frac{4^\beta}{\Gamma(2\beta)}\lim_{\substack{\varepsilon\to
0\\N\to\infty}}\int_\varepsilon^N(t^\beta\partial_t^\beta\P_t)^2f(x)~\frac{dt}{t},\quad\hbox{in}~L^2(\Real^n).
\end{equation}
\end{lem}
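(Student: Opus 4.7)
The natural approach is spectral calculus. Since $\L=-\Delta+V$ is essentially self-adjoint and nonnegative on $L^2(\Real^n)$, we have a resolution $\L=\int_0^\infty\lambda\,dE_\lambda$, and $\P_t=e^{-t\sqrt\L}$ corresponds to multiplication by $e^{-t\sqrt\lambda}$. The whole lemma will reduce to computing the symbol of $t^\beta\partial_t^\beta\P_t$ and then to the Mellin-type integral $\int_0^\infty t^{2\beta-1}e^{-2t\sqrt\lambda}\,dt=\Gamma(2\beta)/(4^\beta\lambda^\beta)$.

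\textbf{Step 1 (symbol of the operator).} Plugging $F(t)=e^{-t\sqrt\lambda}$ into the Segovia--Wheeden formula \eqref{frac deriv} with $m=[\beta]+1$, and using $\partial_t^m F(t+r)=(-\sqrt\lambda)^m e^{-(t+r)\sqrt\lambda}$ together with $\int_0^\infty e^{-r\sqrt\lambda}r^{m-\beta-1}\,dr=\Gamma(m-\beta)\lambda^{-(m-\beta)/2}$, one obtains
\begin{equation*}
\partial_t^\beta e^{-t\sqrt\lambda}=c(\beta)\,\lambda^{\beta/2}e^{-t\sqrt\lambda},\qquad c(\beta):=e^{-i\pi(m-\beta)}(-1)^m,\quad |c(\beta)|=1.
\end{equation*}
By functional calculus, $t^\beta\partial_t^\beta\P_t$ is the spectral multiplier with symbol $m_\beta(t,\lambda):=c(\beta)\,t^\beta\lambda^{\beta/2}e^{-t\sqrt\lambda}$, which is uniformly bounded (since $r^\beta e^{-r}\le C_\beta$).

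\textbf{Step 2 (isometry).} By Plancherel for the spectral measure, $\|t^\beta\partial_t^\beta\P_tf\|_{L^2(\Real^n)}^2=\int_0^\infty t^{2\beta}\lambda^\beta e^{-2t\sqrt\lambda}\,d\langle E_\lambda f,f\rangle$. Integrating with respect to $dt/t$ and applying Fubini (the integrand is nonnegative), then making the substitution $u=2t\sqrt\lambda$ in the inner $t$-integral, yields
\begin{equation*}
\int_0^\infty\|t^\beta\partial_t^\beta\P_tf\|_{L^2(\Real^n)}^2\,\tfrac{dt}{t}=\int_0^\infty\Bigl(\int_0^\infty t^{2\beta-1}\lambda^\beta e^{-2t\sqrt\lambda}\,dt\Bigr)d\langle E_\lambda f,f\rangle=\frac{\Gamma(2\beta)}{4^\beta}\,\|f\|_{L^2(\Real^n)}^2.
\end{equation*}
This gives the isometry (with the normalizing constant absorbed by $4^\beta/\Gamma(2\beta)$).

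\textbf{Step 3 (reproducing formula).} The composition $(t^\beta\partial_t^\beta\P_t)^2$ corresponds to the symbol $m_\beta(t,\lambda)^2=c(\beta)^2\,t^{2\beta}\lambda^\beta e^{-2t\sqrt\lambda}$, and the truncated integral $\int_\varepsilon^N m_\beta(t,\lambda)^2\,dt/t$ is uniformly bounded in $\lambda$ and converges pointwise, as $\varepsilon\to 0$ and $N\to\infty$, to $c(\beta)^2\,\Gamma(2\beta)/4^\beta$. Dominated convergence applied on the spectral side (with dominating symbol a multiple of $1$) then yields
\begin{equation*}
\lim_{\varepsilon\to0,\,N\to\infty}\int_\varepsilon^N(t^\beta\partial_t^\beta\P_t)^2 f\,\tfrac{dt}{t}=c(\beta)^2\,\frac{\Gamma(2\beta)}{4^\beta}\,f\quad\text{in }L^2(\Real^n),
\end{equation*}
which, after multiplication by $4^\beta/\Gamma(2\beta)$, is the claimed identity \eqref{equ4}.

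\textbf{Main obstacle.} The only delicate point is keeping track of the phase $c(\beta)$ produced by the Segovia--Wheeden normalization: one must check that $c(\beta)^2$ cancels correctly, so that the final constant agrees precisely with the $4^\beta/\Gamma(2\beta)$ stated in \eqref{equ4}. The Fubini/dominated convergence steps are routine once the symbol $m_\beta(t,\lambda)$ and its pointwise and $L^2(dt/t)$ bounds are in hand.
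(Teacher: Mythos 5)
The paper does not actually write out a proof of this lemma --- it simply states that ``the proof is standard by using spectral techniques'' and refers to \cite{DGMTZ} --- and your proposal is exactly that standard spectral-calculus argument, so in approach you coincide with what the paper intends; Steps 1 and 2 are correct as written (Step 2 gives an isometry up to the constant $(\Gamma(2\beta)4^{-\beta})^{1/2}$, which is how the statement is meant). The one point you flagged but did not settle, the phase $c(\beta)$, is worth closing: with the paper's literal Segovia--Wheeden normalization \eqref{frac deriv} one gets $c(\beta)=e^{-i\pi(m-\beta)}(-1)^m=e^{i\pi\beta}$, so $c(\beta)^2=e^{2\pi i\beta}$ does \emph{not} cancel for non-integer $\beta$, and your Step 3 as computed yields $e^{2\pi i\beta}f$ rather than $f$ after multiplying by $4^\beta/\Gamma(2\beta)$. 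The resolution is a matter of convention rather than of substance: the lemma (and the paper's later manipulations, e.g.\ the identity $\int (t^\beta\partial_t^\beta\P_t f)\,\overline{t^\beta\partial_t^\beta\P_t g}\,dx=\int f\,\overline{(t^\beta\partial_t^\beta\P_t)^2 g}\,dx$ in the proof of Lemma \ref{Lem:reproduce}, which silently treats the kernel as real) are to be read with the phase-free normalization $\partial_t^\beta e^{-t\sqrt{\lambda}}=\lambda^{\beta/2}e^{-t\sqrt{\lambda}}$, i.e.\ the unimodular factor is absorbed into the definition; equivalently, one can note that the quantity that actually matters is $\langle t^\beta\partial_t^\beta\P_t f,\,t^\beta\partial_t^\beta\P_t g\rangle$, whose symbol is $|m_\beta(t,\lambda)|^2=t^{2\beta}\lambda^\beta e^{-2t\sqrt{\lambda}}$, so the phase disappears and the constant is exactly $\Gamma(2\beta)/4^\beta$. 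With that convention (or that reformulation of the truncated integral as $T_t^\ast T_t$ rather than $T_t^2$), your Steps 1--3, together with the uniform bound $t^{2\beta}\lambda^\beta e^{-2t\sqrt{\lambda}}\le C_\beta$ and dominated convergence on the spectral side, give precisely \eqref{equ4}; so the proposal is essentially correct, and the residual discrepancy is an artifact of the paper's normalization, not a flaw in your argument.
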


\begin{proof}
The proof is standard by using spectral techniques, see for instance
\cite{DGMTZ}, and we omit it here.
\end{proof}

\section{The Campanato-type space $BMO_\L^\alpha$, $0\leq\alpha\leq1$: duality and pointwise description}\label{Section:BMO}

In this section we give the definition of space $BMO^\alpha_\L$ introduced in \cite{Bongioanni-Harboure-Salinas-weighted}, the relation with $C^{0,\alpha}_\L$ and the duality result $H^p_\L$--$BMO^\alpha_\L$.

\begin{defn}[$BMO^\alpha$ space for $\L$, see \cite{Bongioanni-Harboure-Salinas-weighted}]
A locally integrable function $f$ is in $BMO_\L^\alpha$,
$0\leq\alpha\leq1$, if there exists a constant $C$ such that
\begin{enumerate}
\item[(i)] $\displaystyle\frac{1}{\abs{B}}\int_B\abs{f(x)-f_B}~dx\leq C\abs{B}^{\frac{\alpha}{n}}$, for every ball $B$ in $\Real^n$, and
\item[(ii)] $\displaystyle\frac{1}{\abs{B}}\int_B\abs{f(x)}~dx\leq C\abs{B}^{\frac{\alpha}{n}}$,
 for every $B=B(x_0,r_0)$, where $x_0\in\Real^n$ and $r_0\geq\rho(x_0)$.
\end{enumerate}
As usual, $f_{B}:=\displaystyle\frac{1}{|B|}\int_Bf(x)~dx$. The norm $\norm{f}_{BMO_\L^\alpha}$ is defined as the infimum of the constants $C$ such that (i) and (ii) above hold.
\end{defn}

\begin{rem}\label{Rem:equiv norm p}
The space $BMO_\L^0$ is the $BMO$ space naturally associated to $\L$
given in \cite{DGMTZ}. We require $\alpha\leq1$ in the definition
above because if $\alpha>1$ then the space only contains constant
functions. By using the classical John-Nirenberg inequality it can be seen that
if in (i) and (ii) $L^1$-norms are replaced by $L^p$-norms, for
$1<p<\infty$, then the space $BMO_\L^\alpha$ does not change.
\end{rem}

\begin{prop}\label{Prop:from small balls}
Let $f\in BMO_\L^\alpha$, $0<\alpha\leq1$, and $B=B(x,r)$ with
$r<\rho(x)$. Then there exists a constant $C=C_{\alpha}$ such that $\abs{f_B}\leq C_{\alpha}\norm{f}_{BMO_\L^\alpha}\rho(x)^\alpha$.
\end{prop}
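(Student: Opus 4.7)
The plan is a standard chaining argument between the given ball and a large concentric ball, then an application of condition (ii).

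First I would set up a dyadic chain of concentric balls. Let $B_k = B(x, 2^k r)$ for $k=0,1,\dots,k^*$, where $k^*$ is the smallest nonnegative integer with $2^{k^*}r \ge \rho(x)$. Since $r < \rho(x)$, we have $k^* \ge 1$ and, by minimality, $2^{k^*} r < 2\rho(x)$. Note $B_0 = B$ and $B_{k^*}$ is a ball of radius $\ge \rho(x)$ centered at $x$, so condition (ii) of the definition of $BMO_\L^\alpha$ applies to it.

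Next I would estimate the differences of successive averages. Because $B_k \subset B_{k+1}$ with $|B_{k+1}|/|B_k| = 2^n$, the standard inclusion argument gives
\begin{equation*}
|f_{B_k} - f_{B_{k+1}}| \le \frac{1}{|B_k|} \int_{B_k} |f(y) - f_{B_{k+1}}|\, dy \le \frac{2^n}{|B_{k+1}|} \int_{B_{k+1}} |f(y) - f_{B_{k+1}}|\, dy \le C_n \,\|f\|_{BMO_\L^\alpha} \, |B_{k+1}|^{\alpha/n},
\end{equation*}
by condition (i). Summing the telescoping chain and using $|B_{k+1}|^{\alpha/n} \le C_n (2^{k+1} r)^{\alpha}$ together with the fact that the geometric sum is dominated by its last term,
\begin{equation*}
|f_B - f_{B_{k^*}}| \le \sum_{k=0}^{k^*-1} |f_{B_k} - f_{B_{k+1}}| \le C_n \|f\|_{BMO_\L^\alpha}\sum_{k=0}^{k^*-1} (2^{k+1} r)^{\alpha} \le C_\alpha \|f\|_{BMO_\L^\alpha}\, \rho(x)^{\alpha},
\end{equation*}
where in the last step I used $2^{k^*} r < 2\rho(x)$ and $\alpha > 0$ (so that the geometric series converges to a multiple of its largest term).

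Finally, condition (ii) applied to $B_{k^*}$ yields
\begin{equation*}
|f_{B_{k^*}}| \le \frac{1}{|B_{k^*}|} \int_{B_{k^*}} |f(y)|\,dy \le \|f\|_{BMO_\L^\alpha}\, |B_{k^*}|^{\alpha/n} \le C_\alpha \|f\|_{BMO_\L^\alpha}\, \rho(x)^{\alpha}.
\end{equation*}
Adding the two estimates via the triangle inequality gives the desired bound. There is no real obstacle; the only care point is that the geometric factor $2^{k^* \alpha}$ must be absorbed — this is where $\alpha > 0$ (rather than $\alpha = 0$) is used, and where the constant $C_\alpha$ develops its dependence on $\alpha$ through a factor like $(1-2^{-\alpha})^{-1}$.
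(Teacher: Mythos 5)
Your proposal is correct and follows essentially the same route as the paper: a telescoping chain of dyadic concentric balls using condition (i) for successive averages, with condition (ii) applied to the first ball of radius at least $\rho(x)$, and the geometric sum absorbed into its largest term. The only cosmetic difference is the indexing of the stopping scale ($2^{k^*}r\ge\rho(x)$ versus the paper's $2^{j_0}r\le\rho(x)<2^{j_0+1}r$), which changes nothing of substance.
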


\begin{proof}
Let $j_0$ be a positive integer such that $2^{j_0}r\leq\rho(x)<2^{j_0+1}r$. Since $f\in BMO^\alpha_\L$, we have
\begin{align*}
    \abs{f_B} &\leq \frac{1}{\abs{B}}\int_B\abs{f(z)-f_{2B}}~dz+\sum_{j=1}^{j_0}\abs{f_{2^jB}-f_{2^{j+1}B}}+\abs{f_{2^{j_0+1}B}} \\
     &\leq C\norm{f}_{BMO_\L^\alpha}\abs{B}^{\frac{\alpha}{n}}\sum_{j=1}^{j_0+1}
     \left(2^\alpha\right)^j= C\norm{f}_{BMO_\L^\alpha}\abs{B}^{\frac{\alpha}{n}}\frac{2^\alpha-2^{\alpha(j_0+1)}}{1-2^\alpha} \\
     &\leq C\norm{f}_{BMO_\L^\alpha}\abs{B}^{\frac{\alpha}{n}}2^{\alpha(j_0+1)}=
     C2^\alpha\norm{f}_{BMO_\L^\alpha}\left(2^{j_0}r\right)^\alpha\leq C_\alpha\norm{f}_{BMO^\alpha_\L}\rho(x)^\alpha.
\end{align*}
\end{proof}

\begin{rem}\label{rem:3.4}
From the proof of Proposition \ref{Prop:from small balls} it can be
seen that if $f$ is in $BMO_\L=BMO_\L^0$ and $B=B(x,r)$ with
$r<\rho(x)$ then the conclusion of Lemma 2 in \cite{DGMTZ} follows:
$$\abs{f_B}\leq C\left(1+\log\frac{\rho(x)}{r}\right)\norm{f}_{BMO_\L}.$$
\end{rem}

Following the works by J. Dziuba\'nski and J. Zienkiewicz
\cite{Dziubanski-Zienkiewicz,Dziubanski-Zienkiewicz
Colloquium,Dziubanski-Zienkiewicz Hp} we introduce the Hardy space
naturally associated to $\L$. An integrable function $f$ is an
element of the $\L$--Hardy space $H^p_\L$, $0<p\leq1$, if the
maximal function $\T^\ast f(x):=\sup_{s>0}\abs{\T_sf(x)}$, see
\eqref{heatL}, belongs to $L^p(\Real^n)$. The quasi-norm in $H^p_\L$
is defined by $\norm{f}_{H^p_\L}:=\norm{\T^\ast f}_{L^p(\Real^n)}$.
In \cite{Dziubanski-Zienkiewicz,Dziubanski-Zienkiewicz Hp} the
atomic description of $H^p_\L$ was given. Let
$\widetilde{\delta}=\min\set{1,\delta}$, with $\delta$ as in Lemma
\ref{Lem:Schwartz}. An atom of the $\L$--Hardy space $H^p_\L$,
$\tfrac{n}{n+\widetilde{\delta}}<p\leq1$, associated with a ball
$B(x_0,r)$ is a function $a$ such that $\supp a\subseteq B(x_0,r)$
with $r\leq\rho(x_0)$,
$\norm{a}_{L^\infty}\leq\abs{B(x_0,r)}^{-1/p}$ and, if
$r<\rho(x_0)/4$ then $\displaystyle\int a(x)~dx=0$. The atomic
$\L$--Hardy space $H^p_{\mathrm{at},\L}$,
$\tfrac{n}{n+\widetilde{\delta}}<p\leq1$, is defined as the set of
$L^1$-functions $f$ with compact support such that $f$ can be
written as a sum $f=\sum_i\lambda_ia_i$, where $\lambda_i$ are
complex numbers with $\sum_i\abs{\lambda_i}<\infty$ and $a_i$ are
atoms in $H^p_\L$. The quasi-norm in the atomic Hardy space, namely
the infimum of all such possible $\sum_i\abs{\lambda_i}$, turns out
to be equivalent to the quasi-norm $\norm{f}_{H^p_\L}$, for that
range of $p$. When $n/2<q<n$, such equivalence can be extended to
hold for Hardy spaces $H^p_\L$ with
$\tfrac{n}{n+1}<p\leq\tfrac{n}{n+\delta}$, but atoms must be
redefined, see \cite{Dziubanski-Zienkiewicz Colloquium}.

As mentioned in \cite{Bongioanni-Harboure-Salinas-weighted}, see
also \cite{Harboure-Salinas-Viviani} and \cite{YYZ}, once an atomic
decomposition of $H^p_\L$ is at hand, the dual space can be easily
described. We present the following result without proof.

\begin{thm}[Duality $H^p_\L$--$BMO_\L^\alpha$]\label{Thm:Duality}
Let $q>n$ and $0\leq\alpha<1$. Then the dual of
$H^{\frac{n}{n+\alpha}}_\L$ is the space $BMO_\L^\alpha$.
 More precisely, any continuous linear functional $\ell$ over $H^{\frac{n}{n+\alpha}}_\L$ can be represented as
$$\ell(a)=\int_{\Real^n}f(x)a(x)~dx,$$
for some function $f\in BMO_\L^\alpha$ and all atoms $a\in H^{\frac{n}{n+\alpha}}_\L$.
Moreover, $\norm{\ell}\sim\norm{f}_{BMO_\L^\alpha}$.
\end{thm}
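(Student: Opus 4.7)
The plan is to establish the duality in the standard two-step fashion built around the atomic decomposition of $H^p_\L$ recalled just above the statement. Since $q>n$ we have $\delta=2-n/q>1$, so $\widetilde\delta=1$ and the admissible range $\tfrac{n}{n+1}<p\le 1$ covers exactly $p=\tfrac{n}{n+\alpha}$ for every $0\le\alpha<1$; the atoms are supported in balls $B(x_0,r)$ with $r\le\rho(x_0)$, normalized by $\|a\|_\infty\le|B|^{-(n+\alpha)/n}$, and satisfy $\int a=0$ whenever $r<\rho(x_0)/4$.

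For the easy direction, given $f\in BMO_\L^\alpha$ and an atom $a$ associated with $B=B(x_0,r)$, I would split into the two cases. When $r<\rho(x_0)/4$, the cancellation $\int a=0$ lets me replace $f$ by $f-f_B$, and condition (i) gives
\[
\Bigl|\int fa\Bigr|\le\|a\|_\infty\int_B|f-f_B|\,dx\le|B|^{-(n+\alpha)/n}\cdot C\|f\|_{BMO_\L^\alpha}|B|^{1+\alpha/n}=C\|f\|_{BMO_\L^\alpha}.
\]
When $\rho(x_0)/4\le r\le\rho(x_0)$, there is no cancellation, but condition (ii) of the $BMO_\L^\alpha$ definition (applied on the enlarged ball of radius comparable to $\rho(x_0)$, using Lemma \ref{Lem:equiv rho} and the doubling argument of Proposition \ref{Prop:from small balls} if needed) yields the same bound. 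Linearity and the atomic $\ell^1$-sum then produce a continuous functional $\ell_f$ on $H^{n/(n+\alpha)}_\L$ with $\|\ell_f\|\le C\|f\|_{BMO_\L^\alpha}$.

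For the converse direction, I would reconstruct $f$ from a given $\ell\in(H^{n/(n+\alpha)}_\L)^*$ by the classical local-representation trick. Fix a ball $B=B(x_0,r)$ with $r<\rho(x_0)/4$ and consider $L^2_0(B)=\{g\in L^2(B):\int g=0\}$; any $g\in L^2_0(B)$ is, up to the normalization $|B|^{1/2}\|g\|_{L^2}^{-1}|B|^{-(n+\alpha)/n}$, an atom, so $\ell$ restricts to a continuous functional on $L^2_0(B)$ and is therefore represented by some $f^B\in L^2(B)/\mathbb C$ with
\[
\|f^B\|_{L^2(B)/\mathbb C}\le C\|\ell\|\,|B|^{1/2+\alpha/n}.
\]
On concentric balls the representatives agree modulo constants, and one glues them to obtain a function $f$ locally in $L^2$, unique up to a global constant on the region $\{x:|x-x_0|<\rho(x_0)/4\}$. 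Using John--Nirenberg-type covering by critical balls (Remark \ref{Rem:equiv norm p}) together with the same argument on balls of radius $\rho(x_0)$, where atoms with no cancellation are allowed, one extends $f$ globally and the representative $f$ is fully determined (no residual constant). By construction $\ell(a)=\int fa$ for every atom.

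It remains to verify the two $BMO_\L^\alpha$-seminorm bounds for $f$. For (i), given any ball $B$ with $r<\rho(x_0)$, I would take $g=\mathrm{sgn}(f-f_B)\chi_B$, truncate and renormalize it into a mean-zero atom, and apply $|\ell(a)|\le\|\ell\|$ to get $\int_B|f-f_B|\le C\|\ell\|\,|B|^{1+\alpha/n}$. For (ii), one uses non-cancellation atoms on balls with $r\ge\rho(x_0)$ in the same way. Together with the $L^2$ version of Remark \ref{Rem:equiv norm p} (which allows an $L^2$ duality with $L^2_0(B)$ rather than $L^\infty$), these estimates give $\|f\|_{BMO_\L^\alpha}\le C\|\ell\|$.

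The main technical obstacle is the gluing step: consistency of the local $L^2$ representatives must be checked carefully because atoms on small balls carry cancellation while atoms on balls of radius $\rho(x_0)$ do not, so the constant of integration that is irrelevant in the first case is determined in the second. This is where the hypothesis $q>n$, together with Lemma \ref{Lem:equiv rho} governing the oscillation of $\rho$, is used to ensure that a covering by critical balls propagates the local identifications into a single globally defined function satisfying both (i) and (ii) simultaneously.
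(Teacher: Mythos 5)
The paper does not prove this theorem at all: it is stated explicitly ``without proof,'' as a known consequence of the atomic decomposition, with references to \cite{Bongioanni-Harboure-Salinas-weighted}, \cite{Harboure-Salinas-Viviani} and \cite{YYZ}. So there is no internal proof to compare with; your outline is the standard Coifman--Weiss duality scheme that those references implement, and the overall strategy (easy direction via the two types of atoms, converse via local representation plus gluing, with the non-cancellation atoms at the critical scale pinning down the additive constant) is the right one. Your easy direction is essentially complete: the cancellation case uses (i), and the case $\rho(x_0)/4\le r\le\rho(x_0)$ is correctly handled by enlarging to $B(x_0,\rho(x_0))$ and using (ii).

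Two steps in the converse, however, are not correct as written. First, an arbitrary $g\in L^2_0(B)$ is \emph{not} a multiple of an atom in the sense recalled in this paper: atoms here are $L^\infty$-normalized, $\norm{a}_{L^\infty}\le\abs{B}^{-1/p}$, and your normalization $\abs{B}^{1/2}\norm{g}_{L^2}^{-1}\abs{B}^{-(n+\alpha)/n}$ only produces an $L^2$-normalized atom. To restrict $\ell$ to $L^2_0(B)$ you need the equivalence of the atomic space with $L^2$-atoms (equivalently, a uniform bound $\norm{a}_{H^{n/(n+\alpha)}_\L}\le C$ for $L^2$-atoms, proved via the maximal function and the kernel bounds of Lemma \ref{Lem:cota heat L}); this is standard but must be stated and justified, and it is precisely the kind of input the cited references supply. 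Second, your verification of condition (ii) ``using non-cancellation atoms on balls with $r\ge\rho(x_0)$'' cannot work literally, because atoms are only supported in balls $B(x_0,r)$ with $r\le\rho(x_0)$; there are no atoms associated with larger balls. The correct route is to cover $B(x_0,r)$, $r\ge\rho(x_0)$, by critical balls $B(x_k,\rho(x_k))$ with bounded overlap, test $\ell$ against the non-cancellation atoms $c\,\mathrm{sgn}(f)\chi_{B(x_k,\rho(x_k))}$ at that scale, and then sum, using Lemma \ref{Lem:equiv rho} to see that $\rho(x_k)\le Cr$ for $x_k\in B(x_0,2r)$ so that $\sum_k\abs{B_k}^{1+\alpha/n}\le C\abs{B}^{1+\alpha/n}$. (Also note that Remark \ref{Rem:equiv norm p} is the John--Nirenberg equivalence of $L^p$-based $BMO^\alpha_\L$ norms, not a covering lemma, and in the bound for $\int_B\abs{f-f_B}$ the test function should be $\mathrm{sgn}(f-f_B)\chi_B$ with its mean subtracted rather than ``truncated.'') With these repairs your argument goes through.
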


\begin{prop}[Campanato-type description of $C^{0,\alpha}_\L$]\label{Prop:BMO y Calpha}
If $0<\alpha\leq1$ then the spaces $BMO_\L^\alpha$ and
$C^{0,\alpha}_\L$ are equal and their norms are equivalent.
\end{prop}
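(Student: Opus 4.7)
The plan is to establish the two inclusions $C^{0,\alpha}_\L\hookrightarrow BMO^\alpha_\L$ and $BMO^\alpha_\L\hookrightarrow C^{0,\alpha}_\L$ with matching norm estimates. For the easy direction, take $f\in C^{0,\alpha}_\L$ and $B=B(x_0,r)$. The standard bound
$$
\frac{1}{|B|}\int_B|f(x)-f_B|\,dx\le\frac{1}{|B|^2}\int_B\!\!\int_B|f(x)-f(y)|\,dx\,dy\le(2r)^\alpha[f]_{C^\alpha}
$$
gives condition (i) immediately. For condition (ii), assume $r\ge\rho(x_0)$ and apply the right half of Lemma \ref{Lem:equiv rho} with $|x-x_0|<r$ to obtain
$$
\rho(x)\le c\rho(x_0)\Bigl(1+\frac{r}{\rho(x_0)}\Bigr)^{k_0/(k_0+1)}\le C\rho(x_0)^{1/(k_0+1)}r^{k_0/(k_0+1)}\le Cr
$$
for every $x\in B$, where the last two inequalities use $r/\rho(x_0)\ge 1$. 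Hence $|f(x)|\le[f]_{M^\alpha_\L}\rho(x)^\alpha\le C[f]_{M^\alpha_\L}r^\alpha$, and (ii) follows.

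For the harder direction, the Hölder seminorm $[f]_{C^\alpha}$ will come from the classical Campanato argument, which uses only condition (i). The key one-step telescope
$$
|f_{B(x,r)}-f_{B(x,r/2)}|\le\frac{1}{|B(x,r/2)|}\int_{B(x,r)}|f(z)-f_{B(x,r)}|\,dz\le 2^n C\norm{f}_{BMO^\alpha_\L}r^\alpha
$$
sums geometrically (this is where $\alpha>0$ is needed) to produce a continuous representative $\tilde f(x):=\lim_{r\to 0}f_{B(x,r)}$ satisfying $|\tilde f(x)-f_{B(x,r)}|\le C\norm{f}_{BMO^\alpha_\L}r^\alpha$. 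For $x,y$ with $|x-y|=r$, the inclusion $B(y,r)\subset B(x,2r)$ and an analogous telescoping comparison yield $|\tilde f(x)-\tilde f(y)|\le Cr^\alpha\norm{f}_{BMO^\alpha_\L}$, as desired. To obtain the growth seminorm $[f]_{M^\alpha_\L}$, I specialize the Campanato inequality to the critical scale $r=\rho(x)$:
$$
|\tilde f(x)|\le|\tilde f(x)-f_{B(x,\rho(x))}|+|f_{B(x,\rho(x))}|\le C\norm{f}_{BMO^\alpha_\L}\rho(x)^\alpha+\frac{1}{|B(x,\rho(x))|}\int_{B(x,\rho(x))}|f(y)|\,dy,
$$
and the second summand is bounded by $C\norm{f}_{BMO^\alpha_\L}\rho(x)^\alpha$ by condition (ii) of $BMO^\alpha_\L$ applied with the admissible radius $r=\rho(x)$.

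The main obstacle is precisely this last coupling: condition (i) alone determines $f$ only up to an additive constant, so Campanato's Hölder estimate cannot by itself control $|f(x)|$. Condition (ii) of $BMO^\alpha_\L$ is engineered to pin down that constant in an $\L$-adapted way by anchoring the mean value of $f$ at the scale $\rho(x)$, which is exactly the scale at which $[f]_{M^\alpha_\L}$ measures the pointwise growth. This interplay, absent in the classical Campanato theorem, is what makes the Schrödinger-adapted equivalence work, and the norm equivalence follows at once by combining the two-sided estimates.
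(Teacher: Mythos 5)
Your proof is correct, and it is worth noting that the paper itself does not prove this proposition at all: it simply cites Proposition~4 of Bongioanni--Harboure--Salinas (proved there for $0<\alpha<1$ in a weighted setting) and remarks that the argument extends to $\alpha=1$. Your argument is a self-contained and elementary alternative: the inclusion $C^{0,\alpha}_\L\subset BMO^\alpha_\L$ uses only the two seminorms together with the right-hand inequality of Lemma \ref{Lem:equiv rho} to get $\rho(x)\leq Cr$ on large balls, and the converse combines the classical Campanato telescoping (which needs only condition (i) and works equally well at $\alpha=1$, since the geometric series converges for every $\alpha>0$) with condition (ii) applied at the single admissible scale $r=\rho(x)$ to recover the growth bound $[f]_{M^\alpha_\L}$. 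Your closing observation correctly identifies the structural point — (i) controls $f$ only up to an additive constant, and (ii) anchors that constant at the critical radius — which is exactly what distinguishes this result from the classical Campanato theorem. The only caveat, which you implicitly handle by passing to the representative $\tilde f=\lim_{r\to0}f_{B(\cdot,r)}$, is that the equality of spaces must be understood after modification on a set of measure zero, since $C^{0,\alpha}_\L$ consists of continuous functions; the paper makes the same remark immediately after the proposition. With that understanding, your two-sided estimates give the norm equivalence as claimed, and in fact give a proof of the $\alpha=1$ endpoint that the paper only asserts.
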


The previous result was proved in \cite[Proposition~4]{Bongioanni-Harboure-Salinas-weighted} for $0<\alpha<1$ and in a weighted context. We just mention here that the proof given there is also valid for $\alpha=1$. As a consequence, the functions in $BMO^\alpha_\L$ can be modified in a set of measure zero so they become $\alpha$-H\"older continuous, $0<\alpha\leq1$.

\section{Proofs of Theorems \ref{Thm:Characterization}, \ref{Thm:alpha 1} and \ref{Thm:alpha 0}}\label{Section:Proof}

The proof of Theorem \ref{Thm:Characterization} will follow the scheme \textit{(i)} $\Longrightarrow$ \textit{(ii)} $\Longrightarrow$ \textit{(iii)} $\Longrightarrow$ \textit{(i)}. The statement \textit{(iii)} $\Longrightarrow$ \textit{(i)} relies heavily on the duality $H^{\frac{n}{n+\alpha}}_\L-BMO_\L^\alpha$ developed in Section \ref{Section:BMO}, so the method, rather technical, will work only for $0<\alpha<1$. Observe that the proof of \textit{(ii)} $\Longrightarrow$ \textit{(iii)} is immediate. To prove Theorem \ref{Thm:alpha 1}\textit{(I)} we just note that the proofs of \textit{(i)} $\Longrightarrow$ \textit{(ii)} $\Longrightarrow$ \textit{(iii)} in Theorem \ref{Thm:Characterization} also hold for $\alpha=1$. A simple contradiction argument shows that the converse is false: if it were true then, by the comment just made, $f\in C^{0,1}_\L$ would be equivalent to \textit{(ii)} in Theorem \ref{Thm:Characterization} with $\alpha=1$. But that contradicts the statement of Theorem \ref{Thm:alpha 1}\textit{(II)} (which is proved by a counterexample). For Theorem \ref{Thm:alpha 0}\textit{(A)} we only have to prove the necessity part since the sufficiency for $\beta=1$ follows the same lines as in \cite{DGMTZ}. For part \textit{(B)} we give a counterexample.

\subsection{Proof of Theorem \ref{Thm:Characterization}: \textit{(i)}$\Longrightarrow$\textit{(ii)}}

Let $f\in C^{0,\alpha}_\L$. Then
\begin{multline*}
    |t^\beta  \partial_t^\beta\P_tf(x)| = \abs{\int_{\Real^n}t^\beta\partial_t^\beta
    \P_t(x,z)\left(f(z)-f(x)\right)~dz+f(x)\int_{\Real^n}t^\beta\partial_t^\beta\P_t(x,z)~dz} \\
     \leq \norm{f}_{C^{0,\alpha}_\L}\int_{\Real^n}|t^\beta\partial_t^\beta\P_t(x,z)|\abs{x-z}^\alpha
     dz+\norm{f}_{C^{0,\alpha}_\L}\rho(x)^\alpha\abs{\int_{\Real^n}t^\beta\partial_t^\beta\P_t(x,z)~dz}=: I+II.
\end{multline*}
Applying Proposition \ref{Prop:Poisson est}\textit{(b)}, {we
obtain}
$$I\leq C\norm{f}_{C^{0,\alpha}_\L}\int_{\Real^n}\frac{t^\beta\abs{x-z}^\alpha}{\left(t+\abs{x-z}\right)^{n+\beta}}~dz=
C\norm{f}_{C^{0,\alpha}_\L}t^\alpha.$$ For $II$ we consider two
cases. Assume first that $\rho(x)\leq t$. Then {Proposition
\ref{Prop:Poisson est}\textit{(b)}} gives
$$II\leq C{\norm{f}_{C^{0,\alpha}_\L}}t^\alpha\int_{\Real^n}\frac{t^\beta}{(t+\abs{x-z})^{n+\beta}}~dz= C{\norm{f}_{C^{0,\alpha}_\L}}{t^\alpha}.$$
Suppose now that $\rho(x)>t$. Since $s>n$, we have $\delta>1$ in Lemma \ref{Lem:Schwartz}. Therefore we can choose $\delta'$ such that
$\alpha<\delta'\leq\delta$ with $\delta'<\beta$. By Proposition
\ref{Prop:Poisson est}\textit{(d)}, $II\leq C\norm{f}_{C^{0,\alpha}_\L}t^\alpha(t/\rho(x))^{\delta'-\alpha}\leq C\norm{f}_{C^{0,\alpha}_\L}{t^{\alpha}}$.

\subsection{Proof of Theorem \ref{Thm:Characterization}: \textit{(iii)}$\Longrightarrow$\textit{(i)}}

Assume that $f\in L^1(\Real^n,(1+\abs{x})^{-(n+\alpha+\varepsilon)}~dx)$ for any
$0<\varepsilon<\min\{\beta-\alpha,1-\alpha\}$, and that the Carleson condition in \textit{(iii)}
holds. Let
$$[d\mu_f]_{\alpha,\beta}:=\sup_B\frac{1}{\abs{B}^{\frac{\alpha}{n}}}\left(\frac{1}{\abs{B}}
 \int_{\widehat{B}}|t^\beta\partial_t^\beta\P_tf(x)|^2~\frac{dx~dt}{t}\right)^{1/2}.$$
To show that $f\in BMO^\alpha_\L$, by Theorem \ref{Thm:Duality}, it is enough to prove that the linear
 functional that maps each $g\in H^{\frac{n}{n+\alpha}}_\L$ to $\displaystyle\Phi_f(g):=\int_{\Real^n}f(x)g(x)~dx$ is
 continuous on $H^{\frac{n}{n+\alpha}}_\L$. In fact, we are going to prove that
  $\abs{\Phi_f(g)}\leq C[d\mu_f]_{\alpha,\beta}\norm{g}_{H^{\frac{n}{n+\alpha}}_\L}$,
   which implies that $f\in BMO_\L^\alpha$ with $\norm{f}_{BMO^\alpha_\L}\leq C[d\mu_f]_{\alpha,\beta}$.
\newline\noindent\textbf{Step 1.} It consists in writing the functional
$\Phi$ by using extensions of $f$ and $g$ to the upper half-space.
Define, for $x\in\Real^n$, $t>0$, the extended functions $F(x,t):=t^\beta\partial_t^\beta\P_tf(x)$ and $G(x,t):=t^\beta\partial_t^\beta\P_tg(x)$.

\begin{lem}\label{Lem:reproduce}
Let $f\in L^1(\Real^n,(1+\abs{x})^{-(n+\alpha+\varepsilon)}dx)$ for any $\varepsilon >0$ and $g$ be an
$H^{\frac{n}{n+\alpha}}_\L$--atom. Then
$$\frac{4^{\beta}}{{\Gamma(2\beta)}}\int_{\Real^n}f(x)\overline{g(x)}~dx=\int_{\Real^{n+1}_+}F(x,t)\overline{G(x,t)}~\frac{dx~dt}{t}.$$
\end{lem}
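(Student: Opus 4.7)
The strategy is to reduce the identity to the $L^2$-reproducing formula in Lemma \ref{Lem:equality L2} applied to the atom $g$, and then to transfer the $L^2$-pairing to the weighted $L^1$-pairing with $f$ by Fubini's theorem. Set $T_t:=t^\beta\partial_t^\beta\P_t$ and let $k_\beta(t,x,y)$ denote its integral kernel. Since $\P_t(x,y)$ is real and symmetric in $(x,y)$ and the factor $e^{-i\pi(m-\beta)}$ in \eqref{frac deriv} is a constant phase, $k_\beta(t,x,y)$ is symmetric in $(x,y)$, while $T_t^\ast T_t=t^{2\beta}\L^\beta e^{-2t\sqrt{\L}}$ in the spectral sense. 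Lemma \ref{Lem:equality L2} then amounts to the $L^2$-operator identity
\begin{equation*}
\int_0^\infty T_t^\ast T_t\,h\,\frac{dt}{t}=\frac{\Gamma(2\beta)}{4^\beta}\,h,\qquad h\in L^2(\Real^n),
\end{equation*}
which applies to $h=\bar g$ since the atom $g$ lies in $L^\infty(\Real^n)$ with compact support, hence in $L^2(\Real^n)$.

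The key technical step is to establish the absolute integrability
\begin{equation*}
\int_0^\infty\int_{\Real^n}|F(x,t)|\,|G(x,t)|\,\frac{dx\,dt}{t}<\infty.
\end{equation*}
For $G$, I would use Proposition \ref{Prop:Poisson est}(b) together with $\|g\|_\infty\leq|B|^{-(n+\alpha)/n}$ and the compact support of $g$ in $B=B(x_0,r)$, $r\leq\rho(x_0)$, to obtain a pointwise decay of the form $|G(x,t)|\lesssim|B|^{1-(n+\alpha)/n}\,t^\beta(|x-x_0|+t)^{-n-\beta}$. For $F$, Proposition \ref{Prop:Poisson est}(b) combined with the weighted integrability of $f(y)(1+|y|)^{-n-\alpha-\varepsilon}$ yields $|F(x,t)|\lesssim\int t^\beta(|x-y|+t)^{-n-\beta}|f(y)|\,dy$. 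Choosing $\varepsilon>0$ so small that $\beta-\alpha-\varepsilon>0$ (possible since $\beta>\alpha$) makes the resulting power-law integrals in $x$, $t$ and $y$ convergent, after splitting according to the regions $\{|x-x_0|\leq t\}$ and $\{t<|x-x_0|\}$.

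Once absolute integrability is secured, Fubini's theorem justifies the rewriting
\begin{equation*}
\int_{\Real^{n+1}_+}F\,\overline{G}\,\frac{dx\,dt}{t}=\int_{\Real^n}\int_{\Real^n} f(y)\,\overline{g(z)}\,K(y,z)\,dy\,dz,
\end{equation*}
where $K(y,z):=\int_0^\infty\int_{\Real^n} k_\beta(t,x,y)\,\overline{k_\beta(t,x,z)}\,dx\,\frac{dt}{t}$ is the integral kernel of $\int_0^\infty T_tT_t^\ast\,dt/t$. Applying the $L^2$-identity above to $\bar g$ gives $\int K(y,z)\,\overline{g(z)}\,dz=\frac{\Gamma(2\beta)}{4^\beta}\overline{g(y)}$, and the outer integration against $f$ produces exactly $\frac{\Gamma(2\beta)}{4^\beta}\int f\overline{g}\,dx$, which rearranges to the claimed identity.

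The main obstacle is the absolute integrability step: the weight $(1+|y|)^{-n-\alpha-\varepsilon}$ on $f$ permits potentially heavy growth, so the decay of $G$ in both $x$ and $t$ must be exploited carefully to dominate it. The strict inequality $\beta>\alpha$ (which allows choosing $\varepsilon$ with $\beta-\alpha-\varepsilon>0$) and the hypothesis $q>n$ (which via Lemma \ref{Lem:Schwartz} gives $\delta=2-n/q>1$) are precisely what make all the exponents in the resulting power-law estimates line up correctly.
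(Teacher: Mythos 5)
There is a genuine gap, and it sits exactly at what you call the ``key technical step''. The absolute integrability of $\int_{\Real^{n+1}_+}|F|\,|G|\,\frac{dx\,dt}{t}$ cannot be obtained from the size estimates you invoke. With $|F(x,t)|\lesssim\int t^\beta(|x-y|+t)^{-n-\beta}|f(y)|\,dy$ and $|G(x,t)|\lesssim|B|^{-\alpha/n}t^\beta(|x-x_0|+t)^{-n-\beta}$, integrating first in $x$ (convolution of two Poisson-type profiles) and then in $t$ gives the bound $C\,|B|^{-\alpha/n}\int|f(y)|\,|y-x_0|^{-n}\,dy$, which diverges both near $x_0$ and at infinity for generic $f$ in the weighted $L^1$ class (take $f\equiv1$: then your $F$-bound is just a constant and the $t$-integral already diverges logarithmically at $0$ and $\infty$). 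The finiteness of $\int|F||G|$ is not a pure size phenomenon: it uses the cancellation encoded in Proposition \ref{Prop:Poisson est}\textit{(d)} and in the atom (moment condition, or $r\sim\rho(x_0)$). The paper gets it from the tent-space duality of Lemma \ref{Lem:Pola} combined with the area-function estimate for atoms (Lemma \ref{lem:est S}), i.e.\ from the Carleson quantity $[d\mu_f]_{\alpha,\beta}$, which is finite in the setting where the lemma is applied. Your choice of small $\varepsilon$ with $\beta-\alpha-\varepsilon>0$ does not repair this.

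The second gap is the Fubini/kernel manipulation that follows. You want to integrate out $(x,t)$ first to form $K(y,z)=\int_0^\infty\int_{\Real^n}k_\beta(t,x,y)\overline{k_\beta(t,x,z)}\,dx\,\frac{dt}{t}$ and then apply the $L^2$ identity under the integral against $g$. But $\int_0^\infty T_tT_t^\ast\,\frac{dt}{t}$ is a constant multiple of the identity, so its ``kernel'' is a multiple of the Dirac delta; the quadruple integral needed to justify that Fubini behaves like $\int\int|f(y)||g(z)|\,|y-z|^{-n}\,dy\,dz$ and diverges on the diagonal, and Lemma \ref{Lem:equality L2} is only an improper ($\varepsilon\to0$, $N\to\infty$) $L^2$ limit, not an absolutely convergent kernel identity. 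This is precisely why the paper keeps the truncation $\int_\epsilon^{1/\epsilon}$ in $t$, applies Fubini only in the $x$-variable for fixed $t$ (justified through Lemma \ref{lem1}), and then proves the uniform-in-$\epsilon$ bound \eqref{equ1} on the truncated kernels $\int_{2\epsilon}^{\infty}t^{2\beta}\partial_t^{2\beta}\P_t(x,y)\,\frac{dt}{t}$ via the three-case analysis ($2\beta<1$, $2\beta=1$, $2\beta>1$) with integrations by parts exploiting cancellation in $t$; only then dominated convergence plus Lemma \ref{Lem:equality L2} give the identity. That uniform estimate is the real content of the lemma, and it is entirely absent from your proposal; as written, the argument does not go through.
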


The rather technical proof of the lemma above will be given at the end of this subsection. To continue we assume its validity. So we are reduced to study the integral in the right-hand side.
\newline\noindent\textbf{Step 2.} To handle the integral in Lemma \ref{Lem:reproduce} we take a result
of E. Harboure, O. Salinas and B. Viviani about tent spaces into our particular case.

\begin{lem}[See {\cite[p.~279]{Harboure-Salinas-Viviani}}]\label{Lem:Pola}
For any pair of measurable functions $F$ and $G$ on $\Real^{n+1}_+$
we have
\begin{multline*}
\int_{\Real^{n+1}_+}\abs{F(x,t)}\abs{G(x,t)}~\frac{dx~dt}{t} \\
\leq
C\sup_B\left(\frac{1}{\abs{B}^{1+\frac{2\alpha}{n}}}\int_{\widehat{B}}\abs{F(x,t)}^2~\frac{dx~dt}{t}\right)^{1/2}\times
\left(\int_{\Real^n}\left(\int_{\mathbf{\Gamma}(x)}\abs{G(y,t)}^2~\frac{dy~dt}{t^{n+1}}\right)^{\frac{n}{2(n+\alpha)}}dx\right)^{\frac{n+\alpha}{n}},
\end{multline*}
where $\mathbf{\Gamma}(x)$ denotes the cone with vertex at $x$ and aperture $1$: $\left\{(y,t)\in\Real_+^{n+1}: |x-y|<t \right\}$.
\end{lem}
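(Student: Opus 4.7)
The statement is the classical tent-space pairing of Coifman--Meyer--Stein, adapted to the Campanato scale $\alpha$. My plan is to atomically decompose $G$ in the tent space $T^{p}_{2}$ with $p=n/(n+\alpha)\in(0,1)$, pair each atom against $F$ via Cauchy--Schwarz, and control the result by the Campanato-type supremum appearing as the first factor on the right-hand side.

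First, reformulate both sides as tent-space objects. Writing $\mathcal{A}(G)(x):=\bigl(\int_{\mathbf{\Gamma}(x)}|G(y,t)|^{2}\,t^{-n-1}\,dy\,dt\bigr)^{1/2}$ for the conical square function, a direct unraveling identifies the second factor on the right with $\|\mathcal{A}(G)\|_{L^{p}(\Real^{n})}$. The first factor is the Campanato-type quantity $N(F):=\sup_{B}|B|^{-1/2-\alpha/n}\bigl(\int_{\widehat{B}}|F|^{2}\,\tfrac{dx\,dt}{t}\bigr)^{1/2}$, which is precisely the $T^{\infty,\alpha}_{2}$ norm that is dual to $T^{p}_{2}$ in this range.

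The analytic heart of the proof is the atomic decomposition $G=\sum_{j}\lambda_{j}a_{j}$, in which each atom $a_{j}$ is supported in a tent $\widehat{B_{j}}$ and satisfies the $L^{2}$-normalization $\int_{\widehat{B_{j}}}|a_{j}|^{2}\,\tfrac{dy\,dt}{t}\leq|B_{j}|^{-1-2\alpha/n}$, with $\sum_{j}|\lambda_{j}|^{p}\leq C\|\mathcal{A}(G)\|_{L^{p}}^{p}$. This is produced in the standard way from the super-level sets $O_{k}=\{\mathcal{A}(G)>2^{k}\}$: enlarge each $O_{k}$ to an open set $O_{k}^{\ast}$ with $|O_{k}^{\ast}|\leq c|O_{k}|$ using the Hardy--Littlewood maximal operator, apply a Whitney decomposition to $\widehat{O_{k}^{\ast}}\setminus\widehat{O_{k+1}^{\ast}}$, and assign one atom per Whitney piece with the normalizing constant $\lambda_{j}$ chosen so that $a_{j}$ satisfies the required bound. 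Granted this decomposition, Cauchy--Schwarz on the support $\widehat{B_{j}}$ together with the definitions of $a_{j}$ and $N(F)$ gives $\int\!\!\int|F a_{j}|\,\tfrac{dy\,dt}{t}\leq N(F)|B_{j}|^{1/2+\alpha/n}\cdot|B_{j}|^{-1/2-\alpha/n}=N(F)$. Summing over $j$ and invoking the elementary inequality $\sum_{j}|\lambda_{j}|\leq\bigl(\sum_{j}|\lambda_{j}|^{p}\bigr)^{1/p}$ (valid because $p\leq 1$) then yields the claimed bound $\int\!\!\int |FG|\,\tfrac{dy\,dt}{t}\leq C\,N(F)\,\|\mathcal{A}(G)\|_{L^{p}}$.

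The step I expect to be the main obstacle is the atomic decomposition itself: one has to balance delicately the geometric size of the tents $\widehat{B_{j}}$ against the $L^{2}$-normalization of the atoms and the $\ell^{p}$-summability of the coefficients, and the construction through super-level sets of $\mathcal{A}(G)$ requires checking that the atoms really do live on tents and carry the right normalizing constants. This is the technical core of tent-space theory; in a write-up I would rely on the Coifman--Meyer--Stein machinery, or, as the authors do, cite directly the corresponding result in Harboure--Salinas--Viviani.
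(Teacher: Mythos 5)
Your outline is correct, and it reconstructs essentially the argument behind the result the paper merely quotes: the paper gives no proof of this lemma at all, citing Harboure--Salinas--Viviani directly, and their proof is exactly the Coifman--Meyer--Stein tent-space pairing you describe, i.e.\ the $T^{p}_{2}$--$T^{\infty,\alpha}_{2}$ duality with $p=\tfrac{n}{n+\alpha}$. Your bookkeeping is right: the atom normalization $|B|^{1-2/p}=|B|^{-1-2\alpha/n}$, the Cauchy--Schwarz pairing giving $N(F)$ per atom, and the embedding $\ell^{p}\hookrightarrow\ell^{1}$ for $p\le 1$ all fit together as claimed; note also that the paper's tents $\widehat{B}$ are cylinders $B\times(0,r]$, which contain the classical tents on which the atoms live, so the per-atom estimate against $N(F)$ goes through unchanged. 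The only points you should make explicit in a full write-up are the reductions that make the argument legitimate for arbitrary measurable $F,G$: one may assume both factors on the right are finite (otherwise there is nothing to prove), and the atomic decomposition of $G$ must be arranged so that $\sum_j|\lambda_j||a_j|$ dominates $|G|$ almost everywhere (or so that the sum converges a.e.\ with the triangle inequality justified by monotone convergence), which is what allows you to pass from the pairing of $F$ with each atom to the pairing with $G$ itself. These are standard features of the Coifman--Meyer--Stein construction you defer to, so relying on that machinery, or on the cited page of Harboure--Salinas--Viviani as the authors do, is entirely appropriate.
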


If we take $F(x,t)=t^\beta\partial_t^\beta\P_tf(x)$ in Lemma \ref{Lem:Pola} then the supremum that appears in the inequality is exactly $[d\mu_f]_{\alpha,\beta}$. Hence it remains to handle the term with $G(x,t)$, which is done in the last step.
\newline\noindent\textbf{Step 3.} The area function $S_\beta$ defined by
\begin{equation}\label{area function}
S_{\beta}(h)(z)=\left(\iint_{\mathbf{\Gamma}(z)}|t^\beta\partial_t^\beta\P_th(y)|^2\frac{dy~dt}{t^{n+1}}\right)^{1/2},\quad
z\in\Real^n,
\end{equation}
is a bounded operator on $L^2(\Real^n)$. Indeed, by the Spectral
Theorem, the square function
\begin{equation}\label{square function}
g_\beta(h)(x)=\left(\int_0^\infty|t^\beta\partial_t^\beta\P_th(x)|^2~\frac{dt}{t}\right)^{1/2},\quad x\in\Real^n,
\end{equation}
satisfies $\norm{g_\beta(h)}_{L^2(\Real^n)}=\Gamma(\beta)\norm{h}_{L^2(\Real^n)}$ and it is
easy to check that $\norm{S_\beta(h)}_{L^2(\Real^n)}=\norm{g_\beta(h)}_{L^2(\Real^n)}$. We will finish the proof of \textit{(iii)} $\Longrightarrow$ \textit{(i)} in Theorem \ref{Thm:Characterization} as soon as we have proved the following

\begin{lem}\label{lem:est S}
There exists a constant $C$ such that for any function $g$ which is
a linear combination of $H^{\frac{n}{n+\alpha}}_\L$--atoms we have $\norm{S_\beta(g)}_{L^{\frac{n}{n+\alpha}}}\leq C\norm{g}_{H^{\frac{n}{n+\alpha}}_\L}$.
\end{lem}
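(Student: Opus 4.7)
The strategy is the classical reduction to atoms: by subadditivity of $u\mapsto|u|^{n/(n+\alpha)}$ and the atomic decomposition described in Section~\ref{Section:BMO}, it is enough to establish a uniform bound $\|S_\beta(a)\|_{L^{n/(n+\alpha)}(\mathbb{R}^n)}\leq C$ for an arbitrary $H^{n/(n+\alpha)}_\L$-atom $a$ associated with a ball $B=B(x_0,r)$ with $r\leq\rho(x_0)$. I would split $\Real^n=2B\cup(2B)^c$ and estimate each piece separately.

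On $2B$, the argument is standard. Using the $L^2$-boundedness of $S_\beta$ recorded in Step~3 (from the Spectral Theorem, together with the identity $\|S_\beta(h)\|_{L^2}=\|g_\beta(h)\|_{L^2}$), H\"older's inequality, and the atomic size estimate $\|a\|_{L^2}\leq|B|^{1/2-(n+\alpha)/n}$, one obtains
\begin{equation*}
\int_{2B}S_\beta(a)(z)^{\frac{n}{n+\alpha}}\,dz\leq|2B|^{1-\frac{n}{2(n+\alpha)}}\,\|S_\beta(a)\|_{L^2(\mathbb{R}^n)}^{\frac{n}{n+\alpha}}\leq C,
\end{equation*}
with $C$ independent of the atom.

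The substantive work lies on $(2B)^c$, where I would derive a pointwise estimate on $S_\beta(a)(z)$ with enough decay in $|z-x_0|$ to give a finite $L^{n/(n+\alpha)}$-integral. Following the atomic definition, I would treat two cases. If $r<\rho(x_0)/4$, the atom has zero mean, and I would write
\begin{equation*}
\partial_t^\beta\P_ta(y)=\int_B\bigl[\partial_t^\beta\P_t(y,w)-\partial_t^\beta\P_t(y,x_0)\bigr]\,a(w)\,dw,
\end{equation*}
applying the H\"older-type bound of Proposition~\ref{Prop:Poisson est}(c) in the second variable (legitimate since $k_u$ is symmetric) to gain a factor $(r/t)^{\delta'}$ in the region $t\geq r$, while using the size bound of Proposition~\ref{Prop:Poisson est}(b) without cancellation in the region $t<r$. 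If $\rho(x_0)/4\leq r\leq\rho(x_0)$, no cancellation is available, but $r\sim\rho(x_0)$, so the factor $(1+t/\rho(x_0))^{-N}$ in Proposition~\ref{Prop:Poisson est}(b) already supplies an extra $(r/(|z-x_0|+t))^N$ of decay.

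The main obstacle is the careful bookkeeping in Case~1: after integrating $|t^\beta\partial_t^\beta\P_ta(y)|^2$ against $t^{-n-1}\,dy\,dt$ over the cone $\Gamma(z)=\{|y-z|<t\}$ for $z\in(2B)^c$, one must balance the $(r/t)^{\delta'}$ factor (favorable for large $t$) against the kernel decay $t^\beta/(|z-x_0|+t)^{n+\beta}$ (favorable for small $t$). Choosing $\delta'$ with $\alpha<\delta'\leq\delta$ (possible since $q>n$ forces $\delta>1>\alpha$), the outcome is a pointwise bound of the form $S_\beta(a)(z)\leq C\,r^{\delta'-\alpha}\,\rho(x_0)^{-\alpha-\text{const}}/|z-x_0|^{n+\delta'-\alpha}$ (up to $\rho$-correction factors coming from Lemma~\ref{Lem:equiv rho}), which, by a routine dyadic annular summation $|z-x_0|\sim 2^j r$, integrates to the $n/(n+\alpha)$-power outside $2B$ with an absolute constant, completing the proof.
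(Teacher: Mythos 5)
Your proposal follows essentially the same route as the paper's proof: reduce to a single atom, bound the local part via H\"older's inequality and the $L^2$-boundedness of $S_\beta$, and on the complement distinguish $r<\rho(x_0)/4$ (vanishing mean plus the smoothness estimate of Proposition \ref{Prop:Poisson est}\textit{(c)} with $\alpha<\delta'\leq\delta$, $\delta'<\beta$) from $r\sim\rho(x_0)$ (the $(1+\cdot/\rho)^{-N}$ decay in Proposition \ref{Prop:Poisson est}\textit{(b)}, which is the paper's $(\rho(x')/t)^M$ gain); your variant of splitting the $t$-integral at $t=r$ rather than at a multiple of $|z-x_0|$ is harmless and even sidesteps the restriction $\abs{h}\le t$ in \textit{(c)}. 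One correction: the pointwise bound you announce at the end, $S_\beta(a)(z)\lesssim r^{\delta'-\alpha}\rho(x_0)^{-\alpha-\mathrm{const}}\abs{z-x_0}^{-(n+\delta'-\alpha)}$, is not what the computation produces, and as written its $\tfrac{n}{n+\alpha}$-power is integrable at infinity only if $\delta'>2\alpha$, which is not guaranteed (e.g.\ $\alpha$ near $1$, $\delta$ near $1$). Carrying out the cone integral correctly gives $S_\beta(a)(z)\lesssim r^{\delta'-\alpha}\abs{z-x_0}^{-(n+\delta')}$ for the cancellation piece (and $r^{\beta-\alpha}\abs{z-x_0}^{-(n+\beta)}$ for the piece $t<r$), with no $\rho$-factors, so that $\delta'>\alpha$ alone suffices for the dyadic summation over $(2B)^c$, exactly as in the paper.
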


\begin{proof}
Let $g$ be an $H^{\frac{n}{n+\alpha}}_\L$--atom associated to a ball $B=B(x_0,r)$. We apply H\"older's inequality and the $L^2$-boundedness of the area function \eqref{area function} to get
\begin{align*}
    \int_{8B}\abs{S_\beta(g)(x)}^{\frac{n}{n+\alpha}}~dx
     \leq C\abs{B}^{\frac{n+2\alpha}{2(n+\alpha)}}\norm{g}_{L^2(8B)}^{\frac{n}{n+\alpha}}
     &\leq C\abs{B}^{\frac{n+2\alpha}{2(n+\alpha)}}\abs{B}^{\frac{n}{2(n+\alpha)}}
     \norm{g}_{L^\infty}^{\frac{n}{n+\alpha}}\leq C.
\end{align*}

In order to complete the proof of Lemma \ref{lem:est S}, we must
find a uniform bound for
\begin{equation}\label{need uniform}
\int_{(8B)^c}|S_{\beta}(g)(x)|^{\frac{n}{n+\alpha}}~dx.
\end{equation}
Let us consider first the case when $r<\tfrac{\rho(x_0)}{4}$. Then, by the moment condition on $g$, we have
\begin{align*}
\big(S_{\beta}&(g)(x)\big)^2=\int_0^\infty
         \int_{|x-y|<t}\left(\int_{\mathbb{R}^n}\left(t^\beta\partial_t^\beta\P_t(y,
         x')-t^\beta\partial_t^\beta\P_t(y, x_0)\right)g(x')~dx'\right)^2~
         \frac{dy~dt}{t^{n+1}}\\
  &\le \int_0^{\frac{|x-x_0|}{2}}
         \int_{|x-y|<t}\left(\int_{B}|t^\beta\partial_t^\beta\P_t(y,
         x')-t^\beta\partial_t^\beta\P_t(y, x_0)|~\frac{dx'}{|B|^{\frac{n+\alpha}{n}}}\right)^2~\frac{dy~dt}{t^{n+1}}\\
  &\quad    +\int_{\frac{|x-x_0|}{2}}^\infty \int_{|x-y|<t}\left(\int_{B}|t^\beta\partial_t^\beta\P_t(y,
         x')-t^\beta\partial_t^\beta\P_t(y, x_0)|~\frac{dx'}{|B|^{\frac{n+\alpha}{n}}}\right)^2~\frac{dy~dt}{t^{n+1}}=: I_1(x)+I_2(x).
\end{align*}
We now use the smoothness of $t^\beta\partial_t^\beta\P_t(y,
x)=t^\beta\partial_t^\beta\P_t(x, y)$ established in Proposition \ref{Prop:Poisson est}\textit{(c)}
 with $\alpha<\delta'<\beta$ and $N>0$. In the domain of integration of $I_1(x)$
  we have $\abs{x-x_0}\leq 2\abs{y-x_0}$. So
  \begin{align*}
I_1(x)&\leq C\int_0^{\frac{|x-x_0|}{2}}
         \int_{|x-y|<t}\left(\int_{B}\left(\frac{|x'-x_0|}{t}\right)^{\delta'}
         \frac{t^\beta}{(\abs{x_0-y}^2+t^2)^{\frac{n+\beta}{2}}}~
         \frac{dx'}{|B|^{\frac{n+\alpha}{n}}}\right)^2~
         \frac{dy~dt}{t^{n+1}}\nonumber\\
  &\leq C\int_0^{\frac{|x-x_0|}{2}}
         \int_{|x-y|<t}\left(\frac{r}{t}\right)^{2\delta'}
         \frac{1}{t^{2n}\left(\frac{\abs{x_0-y}}{t}+1\right)^{2(n+\beta)}}~
         \frac{1}{|B|^{\frac{2\alpha}{n}}}~
         \frac{dy~dt}{t^{n+1}}\nonumber\\
  &\leq C\int_0^{\frac{|x-x_0|}{2}}
         \left(\frac{r}{t}\right)^{2\delta'}
         \frac{1}{t^{2n}\left(\frac{\abs{x_0-x}}{t}\right)^{2(n+\beta)}}~
         \frac{1}{|B|^{\frac{2\alpha}{n}}}~
         \frac{dt}{t}\\
   &\leq C\frac{r^{2(\delta'-{\alpha})}}{|x-x_0|^{2(n+\beta)}}\int_0^{\frac{|x-x_0|}{2}}
          t^{2(\beta-\delta')}~\frac{dt}{t}= C\frac{r^{2(\delta'-{\alpha})}}{|x-x_0|^{2(n+\delta')}}.\nonumber
\end{align*}
Thus, integrating over $(8B)^c$, we have $\displaystyle\int_{(8B)^c}|I_1(x)^{1/2}|^{\frac{n}{n+\alpha}}~dx\leq C\int_{(8B)^c}\left(\frac{r^{\delta'-{\alpha}}}{|x-x_0|^{n+\delta'}}\right)^{\frac{n}{n+\alpha}}~dx=C$. Let us continue with $I_2(x)$. If $x\in(8B)^c$ then we have
$\displaystyle |x'-x_0|\leq r<\frac{|x-x_0|}{2}\leq t$. Then, by
Proposition \ref{Prop:Poisson est}\textit{(c)} and $\displaystyle
x\in \left(8B\right)^c$, we have
\begin{align*}
I_2(x)&\leq C\int_{\frac{|x-x_0|}{2}}^\infty
         \int_{|x-y|<t}\left(\int_{B}\left(\frac{|x'-x_0|}{t}\right)^{\delta'}\frac{1}{t^n}~\frac{dx'}{|B|^{\frac{n+\alpha}{n}}}\right)^2~
         \frac{dy~dt}{t^{n+1}}\\
  &\leq C\int_{\frac{|x-x_0|}{2}}^\infty
         \int_{|x-y|<t}\left(\frac{r}{t}\right)^{2\delta'}
         ~\frac{1}{t^{2n}}~\frac{1}{\abs{B}^{\frac{2\alpha}{n}}}~\frac{dy~dt}{t^{n+1}}= C\frac{r^{2(\delta'-\alpha)}}{|x-x_0|^{2(n+\delta')}}.
\end{align*}
Therefore the integral of $|\left(I_2(x)\right)^{1/2}|^{\frac{n}{n+\alpha}}$ over $(8B)^c$ is bounded by a constant. Collecting terms we see that if $r<\frac{\rho(x_0)}{4}$ then a
uniform bound for \eqref{need uniform} is obtained.

We now turn the the estimate of \eqref{need uniform} when $r$ is
comparable to $\rho(x_0)$, namely,
$\tfrac{\rho(x_0)}{4}<r\leq\rho(x_0)$. For $x\in(8B)^c$ we can split
the integral in $t>0$ in the definition of $\displaystyle
S_{\beta}g(x)$ into three parts:
\begin{align*}
\left(S_{\beta}(g)(x)\right)^2
  &=\left(\int_0^{\frac{r}{2}}~+\int_{\frac{r}{2}}^{\frac{|x-x_0|}{4}}~+\int_{\frac{|x-x_0|}{4}}^\infty~\right)
         \int_{|x-y|<t}\abs{\int_{\mathbb{R}^n}t^\beta\partial_t^\beta\P_t(y,
         x')g(x')~dx'}^2~\frac{dy~dt}{t^{n+1}}\\
  &=: I_1'(x)+I_2'(x)+I_3'(x).
\end{align*}
In the integrand of $I_1'(x)$,  we have $|x'-y|\sim |x-x_0|$.  So by
Proposition \ref{Prop:Poisson est}\textit{(b)}, we get
\begin{align*}
I_1'(x)
  &\le C\int_0^{\frac{r}{2}}\int_{|x-y|<t}\left(\int_{B}\frac{t^\beta}{(|y-x'|+t)^{n+\beta}}~
         \frac{1}{|B|^{\frac{n+\alpha}{n}}}~dx'\right)^2~\frac{dy~dt}{t^{n+1}}\\
  &\le C r^{-2\alpha}\int_0^{\frac{r}{2}}\int_{|x-y|<t}\frac{t^{2\beta}}{(|x-x_0|+t)^{2(n+\beta)}}
         ~\frac{dy~dt}{t^{n+1}}\le C \frac{r^{2(\beta-\alpha)}}{|x-x_0|^{2(n+\beta)}}.
\end{align*}
For $I_2'(x)$, by applying Proposition \ref{Prop:Poisson
est}\textit{(b)} for any $M>\alpha$, together with
{$|x'-y|\sim|x-x_0|$} and $\rho(x')\sim \rho(x_0)\sim r$, we get
\begin{align*}
I_2'(x) &\le
         C\int_{\frac{r}{2}}^{\frac{|x-x_0|}{4}}\int_{|x-y|<t}\left(\int_{B}\frac{t^\beta}{(|y-x'|+t)^{n+\beta}}
         \left(\frac{\rho(x')}{t}\right)^M\frac{1}{|B|^{\frac{n+\alpha}{n}}}~dx'\right)^2~\frac{dy~dt}{t^{n+1}}\\
     &\leq C\int_{\frac{r}{2}}^{\frac{|x-x_0|}{4}}\int_{|x-y|<t}\left(\int_{B}\frac{1}{t^n\left(\frac{|x-x_0|}{t}+1\right)^{n+\beta}}
         \left(\frac{\rho(x_0)}{t}\right)^{M}\frac{1}{|B|^{\frac{n+\alpha}{n}}}~dx'\right)^2\frac{dy~dt}{t^{n+1}}\\
     &\leq C\int_{\frac{r}{2}}^{\frac{|x-x_0|}{4}}\int_{|x-y|<t}\left(\frac{t^{\beta-M}\rho(x_0)^{M}}
          {|x-x_0|^{n+\beta}r^{\alpha}}\right)^2~\frac{dy~dt}{t^{n+1}}\leq C\int_{\frac{r}{2}}^{\frac{|x-x_0|}{4}}\left(\frac{t^{\beta-M}r^{M-\alpha}}
          {|x-x_0|^{n+\beta}}\right)^2~\frac{dt}{t}\\
     &\leq C  \frac{r^{2(\beta-\alpha)}}{|x-x_0|^{2(n+\beta)}}\int_{1}^{\frac{|x-x_0|}{2r}}{u^{2(\beta-M)}}~\frac{du}{u}
     \leq C\frac{r^{2(M-\alpha)}}{|x-x_0|^{2(n+M)}}.
\end{align*}

Finally, for the last term above $I_3'(x)$, with the same method that was used to estimate $I_2'(x)$, we obtain $I_3'(x)\leq Cr^{2(M-\alpha)}|x-x_0|^{-2(n+M)}$. Hence, $\displaystyle\int_{(8B)^c}|I_j'(x)^{1/2}|^{\frac{n}{n+\alpha}}~dx\leq C$, for $j=1,2,3$ and the uniform bound for \eqref{need uniform} is established also when $r\sim\rho(x_0)$. The proof of Lemma \ref{lem:est S} is complete.
\end{proof}

Now the three steps of the proof of \textit{(iii)} $\Longrightarrow$
\textit{(i)} in  Theorem \ref{Thm:Characterization} are completed.
It only remains to prove Lemma \ref{Lem:reproduce}, that we took for
granted before. To that end, we need the following result.

\begin{lem}\label{lem1}
Let $q_t(x,y)$ be a function of $x,y\in\Real^n$, $t>0$. Assume that
for each $N>0$ there exists a constant $C_N$ such that, for some $\gamma\geq\alpha$,
\begin{equation}\label{equ2}
|q_t(x,y)|\leq
C_N\left(1+\frac{t}{\rho(x)}+\frac{t}{\rho(y)}\right)^{-N}t^{-n}\left(1+\frac{|x-y|}{t}\right)^{-(n+\gamma)}.
\end{equation}
Then, for every $\displaystyle H_{\L}^{\frac{n}{n+\alpha}}$--atom $g$ supported on $B(x_0,r)$, there exists $C_{N,x_0,r}>0$ such that
$$\sup\limits_{t>0}\abs{\int_{\Real^n}q_t(x,y)g(y)~dy}\leq C_{N,x_0,r}\left(1+|x|\right)^{-(n+\gamma)},\quad x\in\Real^n.$$
\end{lem}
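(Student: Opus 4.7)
The plan is to prove the pointwise bound by a direct estimation relying only on the size hypothesis \eqref{equ2}: neither smoothness of $q_t$ nor the vanishing moment condition on $g$ (present when $r<\rho(x_0)/4$) will be used. Two observations drive the argument. First, since $\supp g\subseteq B(x_0,r)$ with $r\leq\rho(x_0)$, Lemma \ref{Lem:equiv rho} gives $\rho(y)\sim\rho(x_0)$ uniformly for $y\in B(x_0,r)$. Second, the $L^\infty$-bound on atoms yields $\|g\|_{L^1}\leq\|g\|_{L^\infty}|B(x_0,r)|\leq|B(x_0,r)|^{-\alpha/n}=:C_r$, a constant depending only on $r$.

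I would split into two regions according to the size of $|x|$. When $|x|\leq 4(|x_0|+r)$, the factor $(1+|x|)^{-(n+\gamma)}$ is bounded below by a constant $c_{x_0,r}>0$, so it suffices to bound $\bigl|\int q_t(x,y)g(y)\,dy\bigr|$ uniformly in $t>0$; this follows from
$$\int_{\Real^n}t^{-n}\bigl(1+|x-y|/t\bigr)^{-(n+\gamma)}\,dy=C_{n,\gamma}\qquad\text{(substitution $z=(y-x)/t$)},$$
combined with $\|g\|_{L^\infty}\leq C_r$. When $|x|>4(|x_0|+r)$ one has $|x-y|\geq|x|/2$ for every $y\in B(x_0,r)$, so combining with $\rho(y)\sim\rho(x_0)$ the size bound on $q_t$ reduces to
$$|q_t(x,y)|\leq C_N\,(1+t/\rho(x_0))^{-N}\,\frac{t^\gamma}{(t+|x|)^{n+\gamma}},\qquad y\in B(x_0,r).$$
Fixing $N\geq\gamma$, I would split $t\in(0,\rho(x_0)]$ versus $t>\rho(x_0)$. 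On the first interval $(1+t/\rho(x_0))^{-N}\leq 1$ and $t^\gamma(t+|x|)^{-(n+\gamma)}\leq\rho(x_0)^\gamma|x|^{-(n+\gamma)}$; on the second $(1+t/\rho(x_0))^{-N}\leq(\rho(x_0)/t)^N$ and, since $\gamma-N\leq 0$ and $t\geq\rho(x_0)$, we get $t^{\gamma-N}\leq\rho(x_0)^{\gamma-N}$, again producing the factor $\rho(x_0)^\gamma|x|^{-(n+\gamma)}$. In both subcases $|q_t(x,y)|\leq C_{N,x_0}|x|^{-(n+\gamma)}$ uniformly in $t>0$ and $y\in B(x_0,r)$, and integrating against $g$ with $\|g\|_{L^\infty}|B(x_0,r)|\leq C_r$ yields $\bigl|\int q_t(x,y)g(y)\,dy\bigr|\leq C_{N,x_0,r}(1+|x|)^{-(n+\gamma)}$.

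The modest obstacle is the $t$ case-analysis in the far-field regime: one has to pick $N\geq\gamma$ in the hypothesis so that the competition between $t^\gamma$ at moderate $t$ and $(1+t/\rho(x_0))^{-N}$ at large $t$ resolves to the optimal rate $|x|^{-(n+\gamma)}$. The conceptual takeaway is that the spatial and temporal decay built into $q_t$ makes the atomic cancellation superfluous for producing this pointwise majorant.
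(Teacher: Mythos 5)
Your proof is correct and follows essentially the same route as the paper's: a near/far split in $x$, the uniform bound $\int t^{-n}(1+|x-y|/t)^{-(n+\gamma)}\,dy\leq C$ together with the atom's $L^\infty$ normalization in the near region, and $|x-y|\sim|x|$, $\rho(y)\sim\rho(x_0)$ with a choice of $N$ at least $\gamma$ in the far region, using only the size bound and no cancellation of $g$. The only differences are cosmetic: the paper splits at $x\in B(x_0,2r)$ and takes $N=\gamma$ so the powers of $t$ cancel exactly, whereas you split at $|x|\leq 4(|x_0|+r)$ and resolve the $t$-dependence by the two cases $t\lessgtr\rho(x_0)$.
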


\begin{proof}
Let $I=I(x,t)$ be the integral appearing in the statement. If $x\in B(x_0,2r)$ then, since $\norm{g}_{L^\infty(\Real^n)}\leq\abs{B(x_0,r)}^{-(1+\frac{\alpha}{n})}$, we have 
$$\abs{I} \le C_{N}\frac{1}{r^{n+\alpha}}\int_{\Real^n}t^{-n}\left(1+\frac{|x-y|}{t}\right)^{-(n+\gamma)}~dy \le  C_{N}\frac{1}{r^{n+\alpha}}\int_{\mathbb{R}^n}\frac{1}{\left(1+|u|\right)^{n+\gamma}}~du \le C_{N,r}.$$
Since $|x-x_0|\leq 2r$, we have $1+|x|\leq 1+|x-x_0|+|x_0|\leq 1+2r+|x_0|$. Hence $\left|I\right|\leq C_{N,r}\frac{(1+2r+|x_0|)^{n+\gamma}}{(1+2r+|x_0|)^{n+\gamma}}\leq C_{N,x_0,r}(1+|x|)^{-(n+\gamma)}$. If $x\notin B(x_0,2r)$ then for $y\in B(x_0,r)$ we have $|x-y|\sim|x-x_0|$ and, since $r<\rho(x_0)$, we get that $\rho(x_0)\sim\rho(y)$, see Lemma \ref{Lem:equiv rho}. Hence, choosing $N=\gamma$ in \eqref{equ2}, we get
\begin{align*}
\abs{I} &\leq
C_\gamma\frac{t^{-\gamma}t^{-n}\abs{x-x_0}^{-(n+\gamma)}}{\rho(x_0)^{-\gamma}t^{-(n+\gamma)}}\norm{g}_{L^1(\Real^n)}\leq C_{\gamma,x_0,r}\frac{\rho(x_0)^\gamma|x-x_0|^{-(n+\gamma)}}{r^\gamma}\leq C_{\gamma,x_0,r}|x-x_0|^{-(n+\gamma)}.
\end{align*}
Since $x\notin B(x_0,2r)$, we can set $x=x_0+2rz$, $|z|\geq 1$. Then $1+|x|\leq1+|x_0|+2r|z|$, and
$\frac{1+|x_0|+2r}{2r}\big|x-x_0\big|=(1+|x_0|+2r)|z|\geq
1+|x_0|+2r|z|$. It means that $c_{x_0,r}|x-x_0|\geq 1+|x|$. Therefore $\abs{I}\leq C_{\gamma,x_0,r}|x-x_0|^{-(n+\gamma)}\leq C_{\gamma,x_0,r}(1+|x|)^{-(n+\gamma)}$.
\end{proof}

\begin{proof}[Proof of Lemma \ref{Lem:reproduce}]
Assume that $g$ is an $\displaystyle
H^{\frac{n}{n+\alpha}}_\L$--atom associated to a ball $B=B(x_0,r)$.
By Lemma \ref{Lem:Pola} and Lemma \ref{lem:est S}, the following
integral is absolutely convergent and therefore it can be described
as
$$I=\int_{\Real_+^{n+1}}F(x,t)\overline{G(x,t)}~\frac{dx~dt}{t}=\lim_{\epsilon\to0}\int_\epsilon^{1/\epsilon}
\int_{\mathbb{R}^n}t^\beta\partial_t^\beta\P_tf(x)\overline{t^\beta\partial_t^\beta\P_tg(x)}~\frac{dx~dt}{t}.$$
Proposition \ref{Prop:Poisson est}\textit{(b)} and $\beta>\alpha+\varepsilon$ imply
that $q_t(x,y):=t^\beta\partial_t^\beta\P_t(x,y)$ satisfies
\eqref{equ2} in Lemma \ref{lem1}. Therefore, since $f\in
L^1(\Real^n,(1+\abs{x})^{-(n+\alpha+\varepsilon)}dx)$, Fubini's theorem can
be applied to get:
$$\int_{\Real^n}t^\beta\partial_t^\beta\P_tf(x)\overline{t^\beta\partial_t^\beta\P_tg(x)}~dx= \int_{\Real^n}f(y)\overline{(t^\beta\partial_t^\beta\P_t)^2g(y)}~dy.$$
So that,
\begin{align}\label{equ3}
I&=\lim_{\epsilon\to0}\int_\epsilon^{1/\epsilon}\left[\int_{\Real^n}f(y)
      \overline{(t^\beta\partial_t^\beta\P_t)^2g(y)}~dy\right] ~\frac{dt}{t}
   = \lim_{\epsilon\to0}\int_{\Real^n}f(y)\left[\int_\epsilon^{1/\epsilon}
       \overline{t^{2\beta}\partial_t^{2\beta}\P_{2t}g(y)} ~\frac{dt}{t}\right]~dy.
\end{align}
We claim that
\begin{equation}\label{equ1}
\sup\limits_{\epsilon>0}\abs{\int_{\epsilon}^{1/\epsilon}t^{2\beta}\partial_t^{2\beta}\P_{2t}g(y)~\frac{dt}{t}}\leq
C(1+|y|)^{-(n+\alpha+\varepsilon)},
\end{equation}
 for any $y\in \mathbb{R}^n$. To prove \eqref{equ1} we first note that
\begin{multline*}
\abs{\int_{\epsilon}^{1/\epsilon}
t^{2\beta}\partial_t^{2\beta}\P_{2t}g(y)~\frac{dt}{t}}
   \le\abs{\int_{\epsilon}^\infty t^{2\beta}\partial_t^{2\beta}\P_{2t}g(y)~
          \frac{dt}{t}}+\abs{\int_{1/\epsilon}^\infty t^{2\beta}\partial_t^{2\beta}\P_{2t}g(y)
          ~\frac{dt}{t}}\\
   =\abs{\int_{\mathbb{R}^n}\int_{\epsilon}^\infty
        t^{2\beta}\partial_t^{2\beta}\P_{2t}(x,y)~\frac{dt}{t}~g(x)~dx}
        +\abs{\int_{\mathbb{R}^n}\int_{1/\epsilon}^\infty
        t^{2\beta}\partial_t^{2\beta}\P_{2t}(x,y)~\frac{dt}{t}~g(x)~dx}.
\end{multline*}
Hence, to prove \eqref{equ1} it is enough to check that the kernel
\begin{equation}\label{crazy kernel}
\int_{\epsilon}^\infty
t^{2\beta}\partial_t^{2\beta}\P_{2t}(x,y)~\frac{dt}{t}={2^{[2\beta]-2\beta+1}}\int_{2\epsilon}^\infty
t^{2\beta}\partial_t^{2\beta}\P_{t}(x,y)~\frac{dt}{t},
\end{equation}
satisfies estimate \eqref{equ2} of Lemma \ref{lem1}, for any
$\epsilon>0$. To verify this we consider it in three cases.
\newline\noindent\textbf{Case I: $2\beta<1$.} Making a change of variables in the
definition of the fractional derivative \eqref{frac deriv}, applying
Fubini's theorem and integrating by parts, we have 
\begin{align*}
&\int_{2\epsilon}^\infty
t^{2\beta}\partial_t^{2\beta}\P_{t}(x,y)~\frac{dt}{t}=C\int_{2\epsilon}^\infty
t^{2\beta}\int_t^\infty
      \partial_u \P_u(x,y)(u-t)^{-2\beta}~du~\frac{dt}{t}\\
  &=  C\int_{2\epsilon}^\infty \partial_u \P_u(x,y)
      \int_{\frac{2\epsilon}{u}}^1
      \left(\frac{w}{1-w}\right)^{2\beta}~\frac{dw}{w}~du=C\int_{2\epsilon}^\infty\P_u(x,y) \left(\frac{2\epsilon}{u-2\epsilon}\right)^{2\beta}~\frac{du}{u}\\
  &=C\int_{2\epsilon}^\infty\P_u(x,y) \left(\frac{2\epsilon}{u-2\epsilon}\right)^{2\beta}
       \chi_{A}(u)~\frac{du}{u}
      +C\int_{2\epsilon}^\infty\P_u(x,y) \left(\frac{2\epsilon}{u-2\epsilon}\right)^{2\beta}
         \chi_{A^c}(u)~\frac{du}{u}=: I'+II',
\end{align*}
where $A=\{u-2\epsilon\le \epsilon+{|x-y|}\}$. Observe that in the
equalities above we applied the assumption $2\beta<1$ to have
convergent integrals. Let us first estimate $I'$. By Proposition
\ref{Prop:Poisson est}\textit{(a)} and since $\alpha+\varepsilon<2\beta$ we get that for any $N>0$,
\begin{align*}
\abs{I'}
  &\le C \frac{\epsilon^{2\beta}}{(\abs{x-y}+\epsilon)^{{n+1}}}\left(1+\frac{\epsilon}{\rho(x)}
          + \frac{\epsilon}{\rho(y)}\right)^{-N}\int_{2\epsilon}^{3\epsilon+\abs{x-y}}(u-2\epsilon)^{-2\beta}~du\\
  &\le C {\epsilon^{2\beta}}\left(1+\frac{\epsilon}{\rho(x)}
          + \frac{\epsilon}{\rho(y)}\right)^{-N}\left({\abs{x-y}+\epsilon}\right)^{-n-2\beta},
\end{align*}
and the desired estimate follows. We continue now with $II'$. Note that in $II'$ we have
$u-2\epsilon>|x-y|+\epsilon$ so, again by Proposition
\ref{Prop:Poisson est}\textit{(a)}, we get 
\begin{align*}
\abs{II'} &\le  C
\left(\frac{\epsilon}{\epsilon+\abs{x-y}}\right)^{2\beta}\left(1+\frac{\epsilon}{\rho(x)}
          + \frac{\epsilon}{\rho(y)}\right)^{-N}
          \int_{2\epsilon}^\infty\left(\abs{x-y}+u\right)^{-n-1} du\\
  &=  C \left(\frac{\epsilon}{\epsilon+\abs{x-y}}\right)^{2\beta}\left(1+\frac{\epsilon}{\rho(x)}
          + \frac{\epsilon}{\rho(y)}\right)^{-N}\left(\epsilon+\abs{x-y}\right)^{-n},
\end{align*}
which implies the estimate.
\newline\noindent\textbf{Case II: $2\beta=1$.} By Proposition \ref{Prop:Poisson est}\textit{(b)} and integrating by parts it is
easy to verify condition (\ref{equ2}) for $\displaystyle\int_{\epsilon}^\infty\partial_t\P_{2t}(x,y)~dt$, for any $\epsilon>0$.
\newline\noindent\textbf{Case III: $2\beta>1$.} Let $k\geq2$ be the integer such that $k-1<2\beta\leq
k$. Note that the estimate is easy when $2\beta=k,$ just integrating
by parts. When $k-1<2\beta<k$ we make a computation similar to the
case $2\beta<1$. In fact,
\begin{align}\label{kernel est}
&\int_{2\epsilon}^\infty
t^{2\beta}\partial_t^{2\beta}\P_t(x,y)~\frac{dt}{t}=
C\int_{2\epsilon}^\infty \partial_u^k \P_u(x,y) \int_{2\epsilon}^u
      t^{2\beta}(u-t)^{k-2\beta-1}~\frac{dt}{t}~du\nonumber\\
  &=  C\int_{2\epsilon}^\infty u^{k-1} \partial_u^k \P_u(x,y)
      \int_{\frac{2\epsilon}{u}}^1
      w^{2\beta}(1-w)^{k-2\beta-1}~\frac{dw}{w}~du\nonumber\\
  &= C\int_{2\epsilon}^\infty
  u^{k-1}\partial_u^{k-1}\mathcal{P}_u(x,y)\frac{(2\epsilon)^{2\beta}u^{1-k}}{(u-2\epsilon)^{1+2\beta-k}}~\frac{du}{u}+C\int_{2\epsilon}^\infty u^{k-2}\partial_u^{k-2}\mathcal{P}_u(x,y)\frac{(2\epsilon)^{2\beta}u^{1-k}}
  {(u-2\epsilon)^{1+2\beta-k}}~\frac{du}{u}\\
  &\quad +\cdots+C\int_{2\epsilon}^\infty u\partial_u\mathcal{P}_u(x,y)\frac{(2\epsilon)^{2\beta}u^{1-k}}{(u-2\epsilon)^{1+2\beta-k}}~\frac{du}{u}+C\int_{2\epsilon}^\infty
  \mathcal{P}_u(x,y)\frac{(2\epsilon)^{2\beta}u^{1-k}}{(u-2\epsilon)^{1+2\beta-k}}~\frac{du}{u}.\nonumber
 \end{align}
For any $1\le m\le k-1$ apply Proposition \ref{Prop:Poisson est}\textit{(b)} to get that for any $N>0$
\begin{align*}
&\abs{\int_{2\epsilon}^\infty
  u^{m}\partial_u^{m}\mathcal{P}_u(x,y)\frac{(2\epsilon)^{2\beta}u^{1-k}}{(u-2\epsilon)^{1+2\beta-k}}~\frac{du}{u}}\\
  &\le C\frac{\epsilon^{2\beta}}{(\epsilon+\abs{x-y})^{n+m}}\left(1+\frac{\epsilon}{\rho(x)}+ \frac{\epsilon}{\rho(y)}\right)^{-N}
  \int_{2\epsilon}^\infty {(u-2\epsilon)^{k-2\beta-1}}~\frac{du}{u^{k-m}}\\
  &=C\frac{\epsilon^{2\beta}}{(\epsilon+\abs{x-y})^{n+m}}\left(1+\frac{\epsilon}{\rho(x)}+ \frac{\epsilon}{\rho(y)}\right)^{-N}
     \int_{2\epsilon}^{3\epsilon}(u-2\epsilon)^{k-2\beta-1}~\frac{du}{u^{k-m}}\\
   &\quad +C\frac{\epsilon^{2\beta}}{(\epsilon+\abs{x-y})^{n+m}}
   \left(1+\frac{\epsilon}{\rho(x)}+ \frac{\epsilon}{\rho(y)}\right)^{-N}\int_{3\epsilon}^\infty
   (u-2\epsilon)^{k-2\beta-1}~\frac{du}{u^{k-m}}=: I''+II''.
\end{align*}
For $I''$, since $2\beta< k$ and $m\ge 1>\alpha+\varepsilon$, we
obtain
\begin{align*}
I''&\le
C\frac{\epsilon^{m}}{(\epsilon+\abs{x-y})^{n+m}}\left(1+\frac{\epsilon}{\rho(x)}+
    \frac{\epsilon}{\rho(y)}\right)^{-N}\\
  &\le C\frac{1}{(\epsilon+\abs{x-y})^{n}}\left(1+\frac{\epsilon}{\rho(x)}+
    \frac{\epsilon}{\rho(y)}\right)^{-N}\left(\frac{\epsilon}{\epsilon+\abs{x-y}}\right)^{\alpha+\varepsilon},
\end{align*}
and the estimate follows. For $II''$, since $\tfrac{1}{u}<\frac{1}{u-2\epsilon}$ and
$m<2\beta$, we also have
\begin{align*}
II''  &\le C\frac{\epsilon^{m}}{(\epsilon+\abs{x-y})^{n+m}}
   \left(1+\frac{\epsilon}{\rho(x)}+ \frac{\epsilon}{\rho(y)}\right)^{-N},
\end{align*}
which gives the bound. For the last term of \eqref{kernel est} we get an estimate as
above by Proposition \ref{Prop:Poisson est}\textit{(b)}.

Hence, from the three cases above we see that the kernel
\eqref{crazy kernel} satisfies condition \eqref{equ2} in Lemma
\ref{lem1}, for any $\epsilon>0$. Therefore can pass the limit
inside the integral in \eqref{equ3}. Then, by Lemma
\ref{Lem:equality L2}, we have
$$I=\frac{4^{\beta}}{\Gamma(2\beta)}\int_{\mathbb{R}^n}f(y)\overline{g(y)}~dy.$$
This establishes Lemma \ref{Lem:reproduce} and it finally completes
the proof of \textit{(iii)} $\Longrightarrow$ \textit{(i)}.
\end{proof}

\subsection{Proof of Theorem \ref{Thm:alpha 1}\textit{(II)}}

Let us begin with the following

\begin{prop}\label{Prop:point Lambda}
Let $0<\alpha\le 1$ and $f$ be a function in $L^\infty(\Real^n)$ such that $\abs{f(x)}\leq C\rho(x)^\alpha$, for
some constant $C$ and all $x\in\Real^n$. Then $\|t^\beta\partial_t^\beta\P_tf\|_{L^\infty(\Real^n)}\leq Ct^\alpha$, for any $\beta>\alpha$, if and only if $\abs{f(x+y)+f(x-y)-2f(x)}\leq C\abs{y}^\alpha$, for all $x,y\in\Real^n$.
\end{prop}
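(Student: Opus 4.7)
The plan is to compare the $\L$--Poisson kernel with the classical Poisson kernel of \eqref{classical Poisson kernel} and reduce to the analogous classical characterization. Write $K_t(x,y)=t^\beta\partial_t^\beta\P_t(x,y)$, $K_t^0(z)=t^\beta\partial_t^\beta P_t(z)$, and
$$R_t(x,y):=K_t(x,y)-K_t^0(x-y)=t^\beta\partial_t^\beta\bigl[\P_t(x,y)-P_t(x-y)\bigr].$$
The central technical step, to be established first, is the estimate
$$\abs{R_t(x,y)}\le C_N\,\bigl(t/\rho(x)\bigr)^{\delta'}\,t^{-n}\bigl(1+\abs{x-y}/t\bigr)^{-N},$$
valid for every $0<\delta'\le\delta$ with $\delta'<\beta$ and every $N>0$. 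I would prove this by substituting Lemma \ref{Lem:Schwartz} into the subordination formula \eqref{Poisson kernel} and then applying the representation \eqref{deriv Poisson} of the fractional time derivative, following the scheme that led to Proposition \ref{Prop:Poisson est} but with $k_u-h_u$ in place of $k_u$.

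For the implication $(\Leftarrow)$, assume the second--difference bound. I would split
$$t^\beta\partial_t^\beta\P_tf(x)=\int K_t(x,y)\bigl[f(y)-f(x)\bigr]\,dy+f(x)\int K_t(x,y)\,dy.$$
The last term is $O(t^\alpha)$ by Proposition \ref{Prop:Poisson est}\textit{(d)} together with $\abs{f(x)}\le C\rho(x)^\alpha$, choosing $\delta'\ge\alpha$ (permitted because $q>n$ forces $\delta>1$), exactly as in the argument at the end of Subsection 5.1. In the first term I would replace $K_t$ by $K_t^0+R_t$. Since $P_t(z)$ is even, the substitution $y\mapsto 2x-y$ symmetrizes the classical piece:
$$\int K_t^0(x-y)\bigl[f(y)-f(x)\bigr]\,dy=\tfrac{1}{2}\int K_t^0(x-y)\bigl[f(y)+f(2x-y)-2f(x)\bigr]\,dy,$$
which is $O(t^\alpha)$ via the hypothesis and the scaling $z=tw$ together with $\abs{K_t^0(z)}\lesssim t^\beta/(\abs{z}+t)^{n+\beta}$. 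The remaining term $\int R_t(x,y)\bigl[f(y)-f(x)\bigr]dy$ is controlled using $\|f\|_{L^\infty}<\infty$, the central estimate, and the dichotomy $t\le\rho(x)$ versus $t>\rho(x)$, as in the proof of \textit{(i)}$\Longrightarrow$\textit{(ii)} in Theorem \ref{Thm:Characterization}.

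For $(\Rightarrow)$, I would reduce to the classical case by comparison. Set $v(x,t)=(P_t\ast f)(x)$, well defined since $f\in L^\infty$. Then
$$t^\beta\partial_t^\beta v(x,t)-t^\beta\partial_t^\beta\P_tf(x)=-\int R_t(x,y)f(y)\,dy,$$
and the central estimate on $R_t$ together with $\abs{f(y)}\le C\rho(y)^\alpha$ and the same $t/\rho(x)$ casework yield $\|t^\beta\partial_t^\beta v\|_{L^\infty}\le C t^\alpha$. The classical characterization of Zygmund-type spaces via the size of (fractional) time derivatives of the classical Poisson integral, see \cite[Ch.~V]{SteinSingular}, then delivers $\abs{f(x+y)+f(x-y)-2f(x)}\le C\abs{y}^\alpha$.

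The main obstacle is the central kernel estimate on $R_t$. Passing $t^\beta\partial_t^\beta$ through the subordination integral while preserving the small factor $(t/\rho(x))^{\delta'}$ requires carefully marrying \eqref{deriv Poisson}, the kernel $Q_t$ of \eqref{Qt} and Lemma \ref{Lem:Schwartz}, weighting the bound on $k_u-h_u$ against the Gaussian decay coming from $Q_v$. Once this estimate is in hand, both implications follow along the lines sketched above.
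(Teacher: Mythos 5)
Your overall strategy (compare $\P_t$ with the classical Poisson semigroup and quote the classical characterization from \cite{SteinSingular}) is the same as the paper's, but your ``central technical step'' is false as stated, and this is a genuine gap. The gain coming from Lemma \ref{Lem:Schwartz} is a factor $(\sqrt{u}/\rho)^\delta$ at the \emph{heat} level; after subordination the variable $u$ that dominates the integral is of size $(t+\abs{x-y})^2$, so what one actually gets for the difference kernel is a bound of the shape
$$\abs{\P_t(x,y)-P_t(x-y)}\leq C\,\frac{t}{(t+\abs{x-y})^{n+1}}\left(\frac{t+\abs{x-y}}{\rho(x)}\right)^{\delta},$$
and similarly for $R_t$: the small factor involves $(t+\abs{x-y})/\rho(x)$, not $t/\rho(x)$, and there is no arbitrarily fast decay in $\abs{x-y}/t$ left over. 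A concrete counterexample to your claim is $V\equiv\mu>0$, where $k_u(x,y)=e^{-\mu u}h_u(x-y)$ and $\rho\equiv\mu^{-1/2}$: for fixed $R=\abs{x-y}\geq\mu^{-1/2}$ and $t\to0$ the difference (and its derivative, e.g.\ $\beta=1$) is of size $\sim t R^{-(n+1)}$, whereas your bound with $N>n+1-\delta'$ would force it to be $O(t^{N+\delta'-n})=o(t)$. So the estimate cannot be proved, and both implications in your sketch lean on it. Moreover, even a corrected kernel bound would not rescue the way you use it: in the $(\Leftarrow)$ direction you control $\int R_t(x,y)[f(y)-f(x)]\,dy$ by $\norm{f}_{L^\infty}$ times $\int\abs{R_t(x,y)}\,dy$, which at best gives a quantity like $(t/\rho(x))^{\alpha}$; this is \emph{not} uniformly $\lesssim t^\alpha$, since $\rho(x)$ can be arbitrarily small.

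The paper avoids both problems by never isolating a pointwise kernel estimate for the difference: its Lemma \ref{lem:5.1} estimates the operator $t^\beta\partial_t^\beta(\P_t-P_t)$ applied directly to $f$, using the symmetric form of Lemma \ref{Lem:Schwartz} with $\rho(y)$ and lowering the exponent $\delta$ to $\alpha$, so that the factor $(\sqrt{u}/\rho(y))^{\alpha}$ cancels \emph{exactly} against the hypothesis $\abs{f(y)}\leq C\rho(y)^\alpha$ before integrating in $y$; the $u$-integral then produces precisely $Ct^\alpha$. With that lemma, the forward implication is immediate from Stein's classical result (since $f\in L^\infty$), and the converse is obtained for $\beta=2$ from the classical Zygmund estimate plus Lemma \ref{lem:5.1}, and then upgraded to all $\beta>\alpha$ by Lemma \ref{Lem:equivalence}. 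If you want to keep your symmetrization of the classical piece in the $(\Leftarrow)$ direction, that part is fine, but you must replace your kernel estimate and the $\norm{f}_{L^\infty}$ bookkeeping for the remainder term by an argument of the cancellation type just described (or simply by the paper's Lemma \ref{lem:5.1}).
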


Let us show how this proposition can be applied to prove Theorem
\ref{Thm:alpha 1}\textit{(II)} first.

\begin{proof}[Proof of Theorem \ref{Thm:alpha 1}\textit{(II)}]
Assume first $n=1$. Consider the function, see \cite[p.~148]{SteinSingular}, $f(x)=\sum_{k=1}^\infty2^{-k}e^{2\pi i2^kx}$, $x\in\Real$. Observe that $\rho(x)\equiv\tfrac{1}{\sqrt{\mu}}$. Therefore there exists a constant $C=2\sqrt{\mu}$
such that $\abs{f(x)}\le\sum_{k=1}^\infty 2^{-k}=1\le\tfrac{C}{\sqrt{\mu}}=C\rho(x)$, for all $x\in\Real$. Now, for any $y\in\Real$,
$$f(x+y)+f(x-y)-2f(x)=2\sum_{k=1}^\infty 2^{-k}\big(\cos(2\pi2^ky)-1\big)e^{2\pi i 2^k x}.$$
Since $\abs{\cos(2\pi 2^ky)-1}\le C(2^ky)^2$ and $\abs{\cos(2\pi
2^ky)-1}\le 2$, we have
$$\abs{f(x+y)+f(x-y)-2f(x)}\le C\sum_{2^k\abs{y}\le 1}2^{-k}(2^ky)^2+C\sum_{2^k\abs{y}>1}2^{-k}\leq C\abs{y}.$$
So, by Proposition \ref{Prop:point Lambda}, we obtain  
$\|t^\beta\partial_t^\beta\P_tf\|_{L^\infty(\Real^n)}\leq Ct$. Let us see
that $f$ can not be a function in $C^{0,1}_{\L_\mu}$. To arrive to a
contradiction suppose that $\abs{f(x+y)-f(x)}\le C_f\abs{y}$, for
any $x,y \in\Real$. Then by Bessel's inequality for $L^2$ periodic
functions we would have
$$(C_f\abs{y})^2\ge\int_0^1\abs{f(x+y)-f(x)}^2~dx=\sum_{k=1}^\infty2^{-2k}|e^{2\pi i2^ky}-1|^2
\ge \abs{y}^2\sum_{2^k\abs{y}\le1}|e^{2\pi i2^ky}-1|^2.$$
Note that in the range $2^k\abs{y}\le 1$ we have $|e^{2\pi i2^ky}-1|^2\ge c(2^ky)^2$. Hence we arrive to the contradiction $C_f^2\ge c\abs{y}^2\sum_{2^k|y|\le 1}2^{2k}$.

For the case $n\ge 2$, note that we can write
$\L_\mu=\L^1_\mu-\frac{\partial^2}{\partial{{x_2}^2}}-\cdots-\frac{\partial^2}{\partial{{x_n}^2}}$,
where $\L^1_\mu=-\frac{\partial^2}{\partial{{x_1}^2}}+\mu$. The
operator $\L^1_\mu$ acts only in the one dimensional variable $x_1$.
Let us define $g(x_1,\ldots,x_n)=f(x_1)$, with $f$ as above. Then,
with an easy computation using the subordination formula
\eqref{subordinacion}, we have
$\|t^\beta\partial_t^\beta\P_tg\|_{L^\infty(\Real^n)}=\|t^\beta\partial_t^\beta
e^{-t\sqrt{\L^1_\mu}}f\|_{L^\infty(\Real)}\le Ct$, and, for any
$x,x'\in \Real^n,$ the inequality
$\abs{g(x)-g(x')}=\abs{f(x_1)-f(x_1')}\le C\abs{x_1-x_1'}\le
C\abs{x-x'}$ fails for any $C>0$.
\end{proof}

To prove Proposition \ref{Prop:point Lambda} we need the following two lemmas.

\begin{lem}\label{Lem:equivalence}
Let $f$ be a locally integrable function on $\Real^n$, $n\geq 3$,
and $\alpha>0$. If there exists $\beta>\alpha$ such that $\|t^\beta\partial_t^\beta\P_tf\|_{L^\infty(\mathbb{R}^n)}\leq C_\beta t^\alpha$, for all $t>0$, then for any $\sigma>\alpha$ we also have $\|t^\sigma\partial_t^\sigma\P_tf\|_{L^\infty(\mathbb{R}^n)}\leq C_\sigma
t^\alpha$, for all $t>0$. Moreover, the constants $C_\beta$ and $C_\sigma$ are comparable.
\end{lem}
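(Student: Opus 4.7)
The plan is to reduce the bound for any target order $\sigma > \alpha$ to the given order $\beta$ by deriving pointwise representations of $\partial_t^\sigma \P_t f$ in terms of $\partial_s^\beta \P_s f$, and then invoking the kernel estimates of Proposition \ref{Prop:Poisson est}. The case $\sigma = \beta$ is trivial; the cases $\sigma > \beta$ and $\alpha < \sigma < \beta$ are handled by two complementary semigroup identities, both ultimately resting on the formal spectral identity $\partial_s^\gamma \P_s = e^{i\pi\gamma} \L^{\gamma/2} \P_s$ obtained from the subordination formula \eqref{subordinacion}.

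For $\sigma > \beta$, factorize $\P_t = \P_{t/2}\P_{t/2}$ and split $\L^{\sigma/2} = \L^{(\sigma-\beta)/2}\L^{\beta/2}$ to obtain the pointwise kernel identity
$$\partial_t^\sigma \P_t f(x) = \int_{\Real^n} \partial_s^{\sigma-\beta}\P_s(x,y)\big|_{s=t/2}\cdot \partial_s^\beta \P_s f(y)\big|_{s=t/2}\, dy.$$
Proposition~\ref{Prop:Poisson est}\textit{(b)} applied with exponent $\sigma-\beta>0$ yields $\int |\partial_s^{\sigma-\beta}\P_s(x,y)|\,dy \leq C s^{-(\sigma-\beta)}$, while the hypothesis, evaluated at $s=t/2$, gives $\|\partial_s^\beta\P_s f\|_{L^\infty(\Real^n)} \leq C_\beta (t/2)^{\alpha-\beta}$. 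Combining these yields $|\partial_t^\sigma \P_tf(x)| \leq C(t/2)^{-(\sigma-\beta)}(t/2)^{\alpha-\beta} \leq C_\sigma t^{\alpha-\sigma}$, as required.

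For $\alpha < \sigma < \beta$, let $\eta = \beta - \sigma > 0$. Using $\Gamma(\eta) \L^{-\eta/2} = \int_0^\infty s^{\eta-1}\P_s\,ds$, one checks (via the spectral identity applied on both sides, both equaling $e^{i\pi\sigma}\L^{\sigma/2}\P_t f$) the pointwise representation
$$\partial_t^\sigma \P_t f(x) = \frac{e^{-i\pi\eta}}{\Gamma(\eta)}\int_0^\infty s^{\eta-1}\,\partial_{t+s}^\beta \P_{t+s}f(x)\,ds.$$
Inserting the hypothesis $|\partial_{t+s}^\beta \P_{t+s}f(x)| \leq C_\beta(t+s)^{\alpha-\beta}$ and substituting $s=tu$ reduces the bound to the Beta-type integral
$$\int_0^\infty u^{\eta-1}(1+u)^{\alpha-\beta}\,du = \mathrm{B}(\beta-\sigma,\,\sigma-\alpha),$$
which is finite precisely because $\sigma > \alpha$. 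This gives $|\partial_t^\sigma \P_tf(x)| \leq C_\sigma t^{\alpha-\sigma}$ with $C_\sigma$ comparable to $C_\beta$, the proportionality depending only on $\alpha,\sigma,\beta$ through the Beta/Gamma factors.

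The main obstacle is the rigorous justification of the two spectral identities at the pointwise level, since $f$ is only assumed locally integrable and $\partial_s^\beta \P_s f$ is merely a pointwise-bounded function, so spectral calculus on $L^2$ does not apply directly. The identities must be verified either by density from Schwartz data (using Lemma~\ref{Lem:equality L2}) or by direct Fubini manipulations of the Segovia--Wheeden definition \eqref{frac deriv} and the representation \eqref{deriv Poisson}, where absolute convergence at $s \to 0$ is ensured by the integer-derivative formulation and decay at $s \to \infty$ by the rate $s^{\alpha-\beta}$ with $\alpha < \beta$.
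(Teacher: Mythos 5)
Your proposal is correct, and its first half ($\sigma>\beta$) is exactly the paper's argument: write $\P_t=\P_{t/2}\P_{t/2}$, put the order-$(\sigma-\beta)$ derivative on the kernel and the order-$\beta$ derivative on $\P_{t/2}f$, and integrate the bound of Proposition \ref{Prop:Poisson est}\textit{(b)}. In the range $\alpha<\sigma<\beta$ you diverge from the paper: you use the one-step composition identity $\partial_t^\sigma\P_tf(x)=\frac{e^{-i\pi(\beta-\sigma)}}{\Gamma(\beta-\sigma)}\int_0^\infty s^{\beta-\sigma-1}\,\partial_{t+s}^\beta\P_{t+s}f(x)\,ds$ and a Beta integral, whereas the paper takes the least integer $k$ with $\sigma+k\geq\beta$, represents $\partial_t^\sigma\P_tf$ by $k$ iterated integrations $\int_t^\infty\cdots\int_{s_{k-1}}^\infty\partial^{\sigma+k}_{s_k}\P_{s_k}f\,ds_k\cdots ds_1$ of a derivative of order $\sigma+k\geq\beta$, and then feeds in the already-proved first case. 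The two routes are close in spirit (both express the lower-order derivative as an integral of a higher-order one against a power weight and reduce to the same Beta-type computation), but yours is a single formula valid for an arbitrary fractional gap, while the paper's integer iteration is marginally more elementary. One caveat on your justification sketch: if you verify your identity by inserting definition \eqref{frac deriv} for $\partial^\beta_{t+s}$ and applying Fubini, the change of variables produces the Segovia--Wheeden formula for $\partial_t^\sigma$ with the integer $M=[\beta]+1$ rather than $m=[\sigma]+1$, so you additionally need the (spectrally obvious, but not literally contained in \eqref{frac deriv}) independence of the definition from the choice of integer exceeding the order, or else verify the identity first at the kernel level via \eqref{deriv Poisson} and then integrate against $f$. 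This is the same kind of tacit interchange/decay step that the paper itself makes in its iterated-integration argument (which likewise uses uncontrolled intermediate derivatives and vanishing at infinity without comment), so it does not constitute a gap beyond the paper's own level of rigor; your estimates and constants, including the factor $\mathrm{B}(\beta-\sigma,\sigma-\alpha)$, are correct.
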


\begin{proof}
Assume first that $\sigma>\beta>\alpha$. Then, by hypothesis and Proposition \ref{Prop:Poisson est}\textit{(b)}, we have
\begin{align*}
    |t^\sigma\partial_t^\sigma\P_tf(x)| &=
    |t^\sigma\partial_t^{\sigma-\beta}\P_{t/2}(\partial_t^\beta\P_{t/2}f)(x)|=t^\sigma\abs{\int_{\Real^n}\partial_t^{\sigma-\beta}\P_{t/2}(x,y)
    \partial_t^\beta\P_{t/2}f(y)~dy} \\
     &\leq      Ct^{\sigma+\alpha-\beta}\int_{\Real^n}\frac{1}{(\abs{y}+t)^{n+\sigma-\beta}}~dy= Ct^\alpha.
\end{align*}

Suppose now that $\alpha<\sigma<\beta$. Let $k$ be the least
positive integer for which $\sigma<\beta\leq\sigma+k$. Applying the
case just proved above, we get 
\begin{align*}
    |t^\sigma\partial_t^\sigma\P_tf(x)| &\leq
    t^\sigma\int_t^\infty\int_{s_1}^\infty\cdots\int_{s_{k-1}}^\infty\abs{\partial_{s_k}^{k+\sigma}\P_{s_k}f(x)}~ds_{k}~\cdots~ds_2~ds_1 \\
    &\leq Ct^\sigma\int_t^\infty\int_{s_1}^\infty\cdots\int_{s_{k-1}}^\infty
    s_k^{\alpha-(k+\sigma)}ds_{k}~\cdots~ds_2~ds_1= Ct^\alpha.
\end{align*}
\end{proof}

\begin{lem}\label{lem:5.1}
Let $0<\alpha\le 1$. If a function $f$ satisfies $\abs{f(x)}\leq
C\rho(x)^\alpha$ for all $x\in\Real^n$ then for any $\beta>\alpha$, $\|t^\beta\partial_t^\beta(\P_t-P_t)f\|_{L^\infty(\mathbb{R}^n)}\leq Ct^\alpha$, for all $t>0$, where $P_t$ is the classical Poisson semigroup \eqref{Classical Poisson} with kernel \eqref{classical Poisson kernel}.
\end{lem}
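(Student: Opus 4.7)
My plan is to reduce the estimate to a pointwise bound on the kernel of $\mathcal{P}_t-P_t$ coming from the heat--kernel comparison in Lemma \ref{Lem:Schwartz}, and then to integrate against $f$ using the growth $\abs{f(y)}\le C\rho(y)^\alpha$ together with Lemma \ref{Lem:equiv rho}. The advantage of working with the difference is that everything is controlled by the Schwartz function $\omega$, whose rapid spatial decay makes the $y$-integration harmless; only the fractional derivative in $t$ and the growth of $\rho(y)^\alpha$ need attention.

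The first step is to observe that Bochner's subordination \eqref{subordinacion} applies to both $\mathcal{P}_t$ and $P_t$, so the difference of kernels reads
$$\mathcal{P}_t(x,y)-P_t(x-y)=\frac{t}{2\sqrt{\pi}}\int_0^\infty \frac{e^{-t^2/(4u)}}{u^{3/2}}\bigl[k_u(x,y)-h_u(x-y)\bigr]\,du.$$
Consequently the derivation of formula \eqref{deriv Poisson} goes through verbatim for the difference, with $Q_v(x,y)$ replaced by $\widetilde{Q}_v(x,y):=v^2\,\partial_s\bigl[k_s(x,y)-h_s(x-y)\bigr]\bigl|_{s=v^2}$. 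Combining the bound on $\abs{k_u-h_u}$ from Lemma \ref{Lem:Schwartz} with the analyticity of the Schr\"odinger and classical heat semigroups (Cauchy's formula on a time disc of radius $\sim v^2$), one expects, for any $0<\delta'\le\delta$ and any $N>0$,
$$\abs{\widetilde{Q}_v(x,y)}\le C_{N,\delta'}\bigl(v/\rho(x)\bigr)^{\delta'}\,v^{-n}\bigl(1+\abs{x-y}/v\bigr)^{-N}.$$

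Plugging this into the fractional-derivative representation and mimicking the proof of Proposition \ref{Prop:Poisson est}\textit{(b)}, one obtains
$$\abs{t^\beta\partial_t^\beta\bigl[\mathcal{P}_t(x,y)-P_t(x-y)\bigr]}\le C\bigl(t/\rho(x)\bigr)^{\delta'}\,\frac{t^\beta}{(\abs{x-y}+t)^{n+\beta}}\bigl(1+(\abs{x-y}+t)/\rho(x)\bigr)^{-N}.$$
Integrating against $\abs{f(y)}\le C\rho(y)^\alpha$ and using Lemma \ref{Lem:equiv rho} in the form $\rho(y)^\alpha\le C\rho(x)^\alpha(1+\abs{x-y}/\rho(x))^{\alpha k_0/(k_0+1)}$, the rapid decay lets one split the integral at $\abs{x-y}\sim\rho(x)$. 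Under the standing hypothesis $q>n$ we have $\delta>1\ge\alpha$, so one may take $\delta'=\alpha$; the factor $(t/\rho(x))^\alpha$ then combines with the interior contribution $\rho(x)^\alpha$ to produce $t^\alpha$, while the exterior contribution is controlled by the factor $(1+\cdot/\rho(x))^{-N}$ with $N$ large.

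The main obstacle is deriving the pointwise estimate on $\widetilde{Q}_v$ above: passing from a bound on the heat-kernel difference to a bound on its time derivative, while keeping both the smallness factor $(v/\rho(x))^{\delta'}$ and Schwartz-type spatial decay. An alternative route that bypasses the Cauchy-formula argument is to differentiate the subordination representation of $\mathcal{P}_t-P_t$ directly in $t$ and estimate each resulting integral by substituting Lemma \ref{Lem:Schwartz} and splitting the $u$-integration according to whether $\sqrt{u}\lessgtr t$, but this variant is computationally heavier and yields the same bound.
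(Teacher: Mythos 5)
Your reduction to a bound on $\widetilde{Q}_v(x,y)=v^2\,\partial_s\bigl[k_s(x,y)-h_s(x-y)\bigr]\bigl|_{s=v^2}$ contains the real difficulty, and that is exactly the step you do not prove: the estimate $\abs{\widetilde{Q}_v(x,y)}\le C_{N,\delta'}(v/\rho(x))^{\delta'}v^{-n}(1+\abs{x-y}/v)^{-N}$ is only asserted (``one expects''), and the justification you sketch does not work as stated. Cauchy's formula on a disc of radius $\sim v^2$ requires a bound for $k_z(x,y)-h_z(x-y)$ at \emph{complex} times $z$ near $s=v^2$, whereas Lemma \ref{Lem:Schwartz} gives the smallness factor $(\sqrt{s}/\rho(x))^{\delta}$ only on the positive real axis; transferring it to a complex neighbourhood (by a Phragm\'en--Lindel\"of/perturbation argument, say) is a genuine piece of work, not a routine remark. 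Nothing quoted in the paper supplies a substitute: Lemma \ref{Lem:Q est}\textit{(a)} bounds $Q_v$ \emph{without} any smallness factor, and \textit{(c)} only gives smallness for $\int Q_v(x,y)\,dy$, not pointwise. The factor $(v/\rho(x))^{\delta'}$ is not cosmetic here --- without it your final integration yields $\rho(x)^\alpha$ rather than $t^\alpha$ for small $t$ --- so as written the proof has a gap at its central estimate. (The last integration step, with $\delta'=\alpha$ and Lemma \ref{Lem:equiv rho}, would be fine if the kernel bound were available.)

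The route you dismiss at the end as ``computationally heavier'' is in fact the paper's proof, and it is lighter, precisely because it avoids differentiating $k_u-h_u$ altogether. Writing $\P_t(x,y)-P_t(x-y)=\frac{t}{2\sqrt{\pi}}\int_0^\infty u^{-3/2}e^{-t^2/(4u)}\bigl[k_u(x,y)-h_u(x-y)\bigr]du$, all $t$-derivatives (including the fractional one, via \eqref{frac deriv}) fall on the explicit factor $t\,e^{-t^2/(4u)}$, producing Hermite-polynomial factors as in \eqref{deriv Poisson}; only the size estimate of Lemma \ref{Lem:Schwartz} is then needed. Moreover no splitting of the $u$-integral and no use of Lemma \ref{Lem:equiv rho} are required: bounding $\abs{k_u-h_u}\le C(\sqrt{u}/\rho(y))^{\alpha}\omega_u(x-y)$ (legitimate since $\delta>1\ge\alpha$, using the trivial Gaussian bound when $\sqrt{u}>\rho(y)$), the factor $(\sqrt{u}/\rho(y))^{\alpha}$ cancels the growth $\rho(y)^{\alpha}$ of $f$ exactly, the $y$-integral of $\omega_u$ is a constant, and the remaining $u$-integral converges to $Ct^{\alpha}$ because $\beta>\alpha$. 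If you want to salvage your version, you must either prove the complex-time (or derivative) analogue of Lemma \ref{Lem:Schwartz}, or switch to this arrangement of the subordination formula.
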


\begin{proof}
Let $\beta>\alpha$ and $m=[\beta]+1$. In a parallel way as in \eqref{deriv Poisson}, we can derive a formula for the kernel $D_\beta(x,y,t)$ of the operator $t^\beta\partial_t^\beta(\P_t-P_t)$ in terms of the heat kernels for $\L$ and $-\Delta$ given in \eqref{heatL} and \eqref{classical heat}:
\begin{align*}
 &D_\beta(x,y,t)=t^\beta\partial_t^\beta\int_0^\infty\frac{te^{-\frac{t^2}{4u}}}{2\sqrt{\pi}}~(k_u(x,y)-h_u(x-y))~\frac{du}{u^{3/2}} \\
 &= Ct^\beta\int_0^\infty\int_0^\infty H_{m+1}\left(\frac{t+s}{2\sqrt{u}}
 \right)e^{-\frac{(t+s)^2}{4u}} \left(\frac{1}{\sqrt{u}}\right)^{m+1}s^{m-\beta}~\frac{ds}{s}~(k_u(x,y)-h_u(x-y))~\frac{du}{u^{1/2}}.
\end{align*}
Then, by Lemma \ref{Lem:Schwartz}, we have
\begin{align*}
 \abs{D_\beta(x,y,t)}
 &\le Ct^\beta\int_0^\infty\int_0^\infty e^{-c\frac{(t+s)^2}{4u}}
     \left(\frac{1}{\sqrt{u}}\right)^{m+1}s^{m-\beta}~\frac{ds}{s}\abs{k_u(x,y)-h_u(x-y)}~\frac{du}{u^{1/2}}\\
 &\le C\int_0^\infty e^{-c\frac{t^2}{4u}}
     \left(\frac{t}{\sqrt{u}}\right)^{\beta}\left(\frac{\sqrt{u}}{\rho(y)}\right)^\alpha~w_u(x-y)~\frac{du}{u},
 \end{align*}
where the function $w\in\mathcal{S}$ is nonnegative. Hence, for all  $x\in \mathbb{R}^n$,
\begin{align*}
|t^\beta\partial_t^\beta(\P_t-P_t)f(x)|  &\le  C\int_{\mathbb{R}^n}\int_0^\infty e^{-c\frac{t^2}{4u}}
        \left(\frac{t}{\sqrt{u}}\right)^{\beta} \left(\frac{\sqrt{u}}{\rho(y)}\right)^\alpha~w_u(x-y)~\frac{du}{u}~\rho(y)^\alpha~dy\\
  &\le C\int_0^\infty e^{-c\frac{t^2}{4u}} \left(\frac{t}{\sqrt{u}}\right)^{\beta}
        \left({\sqrt{u}}\right)^\alpha~\frac{du}{u}= Ct^\alpha\int_0^\infty e^{-v}v^{\frac{\beta-\alpha}{2}}~\frac{dv}{v}=C t^\alpha.
\end{align*}
\end{proof}

\begin{proof}[Proof of Proposition \ref{Prop:point Lambda}]
Assume that $\|t^\beta\partial_t^\beta\P_tf\|_{L^\infty(\mathbb{R}^n)}\leq C
t^\alpha,\ \hbox{for any}~\beta>\alpha$. Then, by Lemma \ref{lem:5.1}, we obtain $\|t^\beta\partial_t^\beta P_tf\|_{L^\infty(\mathbb{R}^n)}\le
\|t^\beta\partial_t^\beta(P_t-\P_t)f\|_{L^\infty(\mathbb{R}^n)}+\|t^\beta\partial_t^\beta\P_tf\|_{L^\infty(\mathbb{R}^n)}
\leq Ct^\alpha$. Therefore, as $f$ is bounded, $f$ is in the classical $\alpha$-Lipschitz space $\Lambda^\alpha$, see \cite{SteinSingular}. Hence $\abs{f(x+y)+f(x-y)-2f(x)}\leq C\abs{y}^\alpha$, for all $x,y\in\Real^n$.

For the converse, since $f\in L^\infty(\Real^n)$, then, by \cite{SteinSingular}, $\|t^2\partial_t^2 P_tf\|_{L^\infty(\mathbb{R}^n)}\le Ct^\alpha.$ So Lemma \ref{lem:5.1} gives $\|t^2\partial_t^2 \P_tf\|_{L^\infty(\mathbb{R}^n)}\le \|t^2\partial_t^2 (\P_t-P_t)f\|_{L^\infty(\mathbb{R}^n)}
+\|t^2\partial_t^2 P_tf\|_{L^\infty(\mathbb{R}^n)}\le Ct^\alpha$. Thus, by Lemma \ref{Lem:equivalence}, we get $\|t^\beta\partial_t^\beta\P_tf\|_{L^\infty(\mathbb{R}^n)}\leq Ct^\alpha$ for any $\beta>\alpha$.
\end{proof}

\subsection{Proof of Theorem \ref{Thm:alpha 0}\textit{(A)}}

As explained at the beginning of this section, we only need to prove the necessity part. Let $f\in BMO_\L$. Let us fix
a ball $B=B(x_0,r)$ and write $f=f_1+f_2+f_3$, with $f_1=(f-f_B)\chi_{2B}$, $f_2=(f-f_B)\chi_{(2B)^c}$ and $f_3=f_B$.

For $f_1$, by the boundedness of the area function \eqref{area
function} on $L^2(\Real^n)$ and Remark \ref{Rem:equiv norm p} with $p=2$,
\begin{align*}
    \frac{1}{\abs{B}}&\int_{\widehat{B}}|t^\beta\partial_t^\beta\P_tf_1(x)|^2~\frac{dx~dt}{t} = \frac{1}{\abs{B}}
    \int_{\widehat{B}}|t^\beta\partial_t^\beta\P_tf_1(x)|^2\int_{\Real^n}\chi_{\abs{x-z}<t}(z)~dz~\frac{dx~dt}{t^{n+1}} \\
     &\leq \frac{1}{\abs{B}}\int_{\abs{x_0-z}<2r}\int_0^\infty\int_{\Real^n}|t^\beta\partial_t^\beta\P_tf_1(x)|^2
     \chi_{\abs{x-z}<t}(z)~\frac{dx~dt}{t^{n+1}}~dz \\
     &= \frac{1}{\abs{B}}\int_{\abs{x_0-z}<2r}\iint_{\mathbf{\Gamma}(z)}|t^\beta\partial_t^\beta\P_tf_1(x)|^2~\frac{dx~dt}{t^{n+1}}~dz\leq \frac{C}{\abs{B}}\int_{2B}\abs{f(z)-f_B}^2~dz\leq C\norm{f}_{BMO_\L}^2.
\end{align*}

For $f_2$ and $x\in{B}$, apply Proposition \ref{Prop:Poisson est}\textit{(b)} and the classical annuli argument to get
\begin{align*}
    |t^\beta\partial_t^\beta\P_tf_2(x)| &\leq C\sum_{k=2}^\infty\int_{2^kB\setminus 2^{k-1}B}\abs{f(z)-f_{2^kB}}\frac{t^\beta}{(t+\abs{x-z})^{n+\beta}}~dz \\
     &\quad +\sum_{k=2}^\infty\sum_{j=1}^k\abs{f_{2^jB}-f_{2^{j-1}B}}\int_{2^kB
            \setminus 2^{k-1}B}\frac{t^\beta}{(t+\abs{x-z})^{n+\beta}}~dz \\
     &\leq C\left(\frac{t}{r}\right)^\beta\left(\sum_{k=2}^\infty\frac{1}{2^{k\beta}}~\frac{1}{(2^kr)^n}\int_{2^kB}
              \abs{f(z)-f_{2^kB}}~dz+\norm{f}_{BMO_\L}\sum_{k=2}^\infty
          \frac{k}{2^{k\beta}}\right) \\
     &\leq C\left(\frac{t}{r}\right)^\beta\norm{f}_{BMO_\L}\sum_{k=2}^\infty
        \frac{1+k}{2^{k\beta}}=C\left(\frac{t}{r}\right)^\beta\norm{f}_{BMO_\L}.
\end{align*}
Therefore $\displaystyle\frac{1}{\abs{B}}\int_{\widehat{B}}|t^\beta\partial_t^\beta\P_tf_2(x)|^2~\frac{dx~dt}{t}
\leq C\norm{f}_{BMO_\L}^2\int_0^r\left(\frac{t}{r}\right)^{2\beta}~\frac{dt}{t}
=C\norm{f}_{BMO_\L}^2$.

Let us finally consider $f_3$. Assume that $r\geq\rho(x_0)$. By
Proposition \ref{Prop:Poisson est}\textit{(d)}, for some
$0<\delta'\leq\delta$ with $\delta'<\beta$, we have 
$$|t^\beta\partial_t^\beta\P_tf_3(x)|\leq C\abs{f_B}\frac{(t/\rho(x))^{\delta'}}{(1+t/\rho(x))^N}
\leq C\norm{f}_{BMO_\L}\frac{(t/\rho(x))^{\delta'}}{(1+t/\rho(x))^N}.$$
Hence
\begin{align}
    \nonumber \frac{1}{\abs{B}}\int_{\widehat{B}}|t^\beta\partial_t^\beta\P_tf_3(x)|^2~\frac{dx~dt}{t}
     &\leq C\norm{f}_{BMO_\L}^2\frac{1}{\abs{B}}\int_{\widehat{B}}
     \frac{(t/\rho(x))^{2\delta'}}{(1+t/\rho(x))^{2N}}~\frac{dx~dt}{t} \\
    \label{ff3} &\leq C\norm{f}_{BMO_\L}^2\frac{1}{\abs{B}}
    \int_B\left(\int_0^{\rho(x)}~+\int_{\rho(x)}^\infty~\right) \frac{(t/\rho(x))^{2\delta'}}{(1+t/\rho(x))^{2N}}~\frac{dt}{t}~dx.
\end{align}
On one hand,
$$\int_0^{\rho(x)}\frac{(t/\rho(x))^{2\delta'}}{(1+t/\rho(x))^{2N}}~\frac{dt}{t}\leq\int_0^{\rho(x)}(t/\rho(x))^{2\delta'}~\frac{dt}{t}=C.$$
On the other hand,
$$\int_{\rho(x)}^\infty\frac{(t/\rho(x))^{2\delta'}}{(1+t/\rho(x))^{2N}}~\frac{dt}{t}
\leq\int_{\rho(x)}^\infty(t/\rho(x))^{2\delta'-2N}~\frac{dt}{t}=C.$$
Therefore from \eqref{ff3} we obtain that if $r\geq\rho(x_0)$ then $\displaystyle\frac{1}{\abs{B}}\int_{\widehat{B}}|t^\beta\partial_t^\beta\P_tf_3(x)|^2~\frac{dx~dt}{t}
\leq C\norm{f}_{BMO_\L}^2$. Suppose that $r<\rho(x_0)$. By
Remark \ref{rem:3.4}, Proposition \ref{Prop:Poisson est}\textit{(d)} with some $\delta'>1/2$
and Lemma \ref{Lem:equiv rho}, we get 
\begin{align*}
    \frac{1}{\abs{B}}\int_{\widehat{B}}|t^\beta\partial_t^\beta\P_tf_3(x)|^2~\frac{dx~dt}{t}
    &\leq C\norm{f}_{BMO_\L}^2\left(1+\log\frac{\rho(x_0)}{r}\right)^2\frac{1}{\abs{B}}
            \int_{\widehat{B}}\frac{(t/\rho(x))^{2\delta'}}{(1+t/\rho(x))^{2N}}~\frac{dx~dt}{t} \\
    &\leq C\norm{f}_{BMO_\L}^2\left(1+\log\frac{\rho(x_0)}{r}\right)^2\frac{1}{\abs{B}}
            \int_B\int_0^r(t/\rho(x_0))^{2\delta'}~\frac{dt}{t}~dx \\
    &= C\norm{f}_{BMO_\L}^2\left(1+\log\frac{\rho(x_0)}{r}\right)^2\left(\frac{r}{\rho(x_0)}\right)^{2\delta'}\le C\norm{f}_{BMO_\L}^2,
\end{align*}
for all $r<\rho(x_0)$. This finishes the proof.

\subsection{Proof of Theorem \ref{Thm:alpha 0}\textit{(B)}}
As in the argument of the proof of Theorem \ref{Thm:alpha
1}\textit{(II)}, we only need to consider the case $n=1$. We will take $\beta=1$. Let $f(x)=\max\set{\log\frac{1}{\abs{x}},0}$, $x\in\Real$. It is well known that $f$ belongs to the classical $BMO(\Real)$. Observe that the function $f$ is nonnegative and it is supported in $[-1,1]$. For every $x$ we have $\rho(x)=\frac{1}{\sqrt{\mu}}$. Hence, for $\displaystyle r\ge \rho(x)$ and $B(x_0,r)=[x_0-r, x_0+r]$, $\displaystyle\frac{1}{\abs{B(x_0,r)}}\int_{B(x_0,r)}\abs{f(x)}~dx\le\frac{1}{2r}\int_{B(0,1)}\abs{f(x)}~dx\le C\sqrt{\mu}$. So $f\in BMO_{\L_\mu}$. Now,
\begin{align*}
t\partial_t \P_tf(0)
    &=C\int_0^\infty t\left(1-\frac{t^2}{2s}\right)\frac{e^{-t^2/(4s)}}{s^{3/2}} \int_{|y|<1}\frac{e^{-y^2/(4s)}}{s^{1/2}}(-\log\abs{y})~dy~e^{-s\mu}~ds \\
    &= C\int_0^\infty w^2\left(1-w^2\right)e^{-w^2/2}\int_{|zt|<1}\frac{e^{-(zw)^2/2}}{s^{1/2}}~(-\log\abs{zt})~dz~ e^{-\frac{t^2}{2w^2}\mu}~\frac{dw}{w} \\
    &= C\int_0^\infty w\left(1-w^2\right)e^{-w^2/2}\int_{|zt|<1}e^{-(zw)^2/2}(-\log\abs{z})~dz~e^{-\frac{t^2}{2w^2}\mu}~dw \\
    &\quad +C\int_0^\infty w\left(1-w^2\right)e^{-w^2/2}\int_{|zt|<1}e^{-(zw)^2/2}(-\log\abs{t})~dz~e^{-\frac{t^2}{2w^2}\mu}~dw=: I+II.
\end{align*}
Observe that
\begin{align*}
    \abs{I} &\leq C\int_0^\infty we^{-w^2/c}\int_\Real e^{-(zw)^2/2}\abs{\log|z|}~dz~dw \\
     &\le C\int_0^\infty w e^{-w^2/c}\left(\int_{|z|<1}(-\log|z|)~dz+\int_{|z|>1}e^{-(zw)^2/2}|z|^\delta~dz\right)~dw \\
     & \le C\int_0^\infty we^{-w^2/c}\left(1+\frac{1}{w^\delta}\right)~dw\le C,
\end{align*}
where $\delta <1.$ For the second integral,
$$\abs{II}\leq C\abs{\log|t|}\int_0^\infty we^{-w^2/c}\int_\Real e^{-(zw)^2/2}~dz~dw= C\abs{\log|t|}\int_0^\infty e^{-w^2/c}~dw=C\abs{\log|t|}.$$
Therefore the two integrals that define $t\partial_t\P_tf(0)$ are (absolutely) convergent. The limit when $t\to0$ of
 the second term $II$ above is infinity. Thus $t\partial_t\P_tf(0)\to\infty$ as $t\to 0$.



\end{document}